\def\1{\hbox{1\kern-.35em\hbox{1}}}
\newtheorem{theorem}{Theorem}[section]
\newtheorem*{theorem*}{Theorem}
\newtheorem{lem}[theorem]{Lemma}
\newtheorem{proposition}[theorem]{Proposition}
\newtheorem*{proposition*}{Proposition}
\newtheorem{corollary}[theorem]{Corollary}
\newtheorem{definition}[theorem]{Definition}
\newtheorem{remark}[theorem]{Remark}
\numberwithin{equation}{section}
\newcommand{\bea}{\begin{eqnarray}}
\newcommand{\eea}{\end{eqnarray}}
\newcommand{\be}{\begin{eqnarray*}}
\newcommand{\ee}{\end{eqnarray*}}
\newcommand{\Z}{{\mathbb Z}}
\newcommand{\C}{{\mathbb C}}
\def\sv{\mathfrak{sv}}
\def\tsv{\widetilde{\mathfrak{sv}}}
\def\hsv{\widehat{\mathfrak{sv}}}
\newcommand{\I}{{\rm I}}
\def\qed{\hfill\mbox{$\Box$}}
\def\epsi{\epsilon}
\def\a{\alpha}
\def\b{\beta}
\def\si{\sigma}
\def\dis{\displaystyle}
\def\Z{\mathbb{Z}}
\def\C{\mathbb{C}}
\def\vp{\varphi}
\def\tau{\pi}
\numberwithin{equation}{section}
\begin{document}
%
\title[Structure of  the extended Schr\"{o}dinger-Virasoro Lie algebra]
{Structure of  the extended Schr\"{o}dinger-Virasoro Lie algebra
${\tsv}$$^{*}$}
\author[Gao]{Shoulan Gao}
\address{Department of Mathematics, Shanghai Jiaotong University, Shanghai
200240, China} \email{gaoshoulan@sjtu.edu.cn}

\author[Jiang]{Cuipo Jiang$^{\dag}$}
\address{Department of Mathematics, Shanghai Jiaotong University, Shanghai
200240, China} \email{cpjiang@sjtu.edu.cn}

\author[Pei]{Yufeng Pei}
\address{Department of Mathematics, Shanghai Jiaotong University, Shanghai
200240, China} \email{yfpei@sjtu.edu.cn}

\begin{abstract}
In this paper,   we study the derivations, the central extensions
and the  automorphism group of the extended
Schr\"{o}dinger-Virasoro Lie algebra  $\widetilde{{\sv}}$,
introduced by J. Unterberger \cite{U} in the context of
two-dimensional conformal field theory and statistical physics.
Moreover, we show that $\widetilde{{\sv}}$ is an
infinite-dimensional complete Lie algebra and the universal
central extension of $\widetilde{{\sv}}$ in the category of
Leibniz algebras is the same as that in the category of Lie
algebras.
\end{abstract}

\thanks{{\bf Keywords:   Schr\"{o}dinger-Virasoro algebra,
 central extension, derivation,   automorphism}}
\thanks{$^{*}$ Supported in part by China NSF grant
10571119.  }

\thanks{$^{\dag}$ Corresponding author: cpjiang@sjtu.edu.cn }
\maketitle


%
\section{\bf Introduction }
%
\label{sub0-c-related}
The Schr\"{o}dinger-Virasoro Lie algebra ${\sv}$, originally introduced by M. Henkel
in \cite{H} during his study on the invariance of the free Schr\"{o}dinger
equation, is  a vector space over the complex field $\C$
with a basis $\{L_{n}, M_{n}, Y_{n+\frac{1}{2}}\ |\ n\in\Z\}$ and
the Lie brackets:
\begin{eqnarray*}
&&[L_{m}, L_{n}]=(n-m)L_{m+n}, \ \ [L_{m}, M_{n}]=nM_{m+n},  \ \ [L_{m},
Y_{n+\frac{1}{2}}]=(n+\frac{1-m}{2})Y_{m+n+\frac{1}{2}},\\
&& [M_{m},
M_{n}]=0, \ \ [Y_{m+\frac{1}{2}},
Y_{n+\frac{1}{2}}]=(m-n)M_{m+n+1},\ \ [M_{m},
Y_{n+\frac{1}{2}}]=0,
\end{eqnarray*}
 for all $m,n\in\Z$. It is easy to see that $\sv$ is a
semi-direct product of the centerless Virasoro algebra (Witt
algebra)  $\mathfrak{Vir}_0=\bigoplus_{n\in\Z}\C L_{n},$ which can
be regarded as the Lie algebra that consists of derivations on the
Laurent polynomial ring  \cite{K}, and the two-step nilpotent
infinite-dimensional Lie algebra
$\mathfrak{h}=\bigoplus_{m\in\Z}\C
Y_{m+\frac{1}{2}}\bigoplus_{m\in\Z}\C M_{m},$ which contains the
Schr\"{o}dinger Lie algebra $\mathfrak{s}$ spanned by
$\{L_{-1},L_0,L_1,Y_{-\frac{1}{2}},Y_{\frac{1}{2}},M_0\}$. Clearly
$\mathfrak{s}$ is isomorphic to the semi-direct product of the Lie
algebra $\mathfrak{sl}(2)$ and the three-dimensional nilpotent
Heisenberg Lie algebra $\langle
Y_{-\frac{1}{2}},Y_{\frac{1}{2}},M_0\rangle$. The structure and
representation theory of $\sv$ have been extensively studied by C.
Roger and J. Unterberger. We refer the reader to \cite{RU} for
more details. Recently, in order to investigate vertex
representations of $\sv$, J. Unterberger \cite{U} introduced a
class of new infinite-dimensional Lie algebras $\tsv$ called the
extended Schr\"{o}dinger-Virasoro algebra (see section 2), which
can be viewed as an extension of $\sv$ by a conformal current with
conformal weight $1$.

In this paper, we  give a complete description of the derivations,
the central extensions and the automorphisms for the extended
Schr\"{o}dinger-Virasoro Lie algebra $\widetilde{{\sv}}$. The
paper is organized as follows:

In section 2, we show that the center of ${\tsv}$ is zero and all
derivations of ${\tsv}$ are inner derivations, i.e., $\rm
H^1(\tsv,\tsv)=0,$ which implies that $\tsv$ is a complete Lie
algebra. Recall that a Lie algebra  is called complete if its
center is zero and all derivations are inner, which was originally
introduced by N. Jacobson in \cite{J}. Over the past decades, much
progress has been obtained on the theory of complete Lie algebras
( see for example \cite{JMZ,ZM,MZJ}). Note that since the center
of ${\sv}$ is non-zero and $\rm {dim\ H}^1(\sv,\sv)=3$, $\sv$ is
not a complete Lie algebra.

In section 3, we determine  the universal central extension of
${\tsv}$. The universal covering algebra $\hsv$ contains the
twisted Heisenberg-Virasoro Lie algebra, which plays an important
role in the representation theory of toroidal Lie algebras
\cite{B1,B2,JJ,SJ}, as a subalgebra. Furthermore, in section 4, we
show that there is no non-zero symmetric invariant bilinear form
on $\tsv$, which implies the universal central extension of
${\tsv}$ in the category of Leibniz algebras is the same as that
in the category of Lie algebras \cite{HPL}.

Finally in section 5,  we give  the  automorphism groups of
${\tsv}$ and its universal covering algebra $\hsv$, which are
isomorphic.

Throughout the paper, we  denote by $\Z$ and $\C^{*}$  the set of
integers and the set of non-zero complex numbers respectively, and
all the vector spaces are assumed over the complex field $\C$.

%
\section{\bf The Derivation Algebra of $\tsv$}
%
\label{sub3-c-related}

\begin{definition}\label{D1.1}
The extended Schr\"{o}dinger-Virasoro Lie algebra
${\tsv}$ is a vector space  spanned by a basis
$\{L_{n}, M_{n}, N_{n}, Y_{n+\frac{1}{2}}\ |\ n\in\Z\}$ with the
following brackets
$$[L_{m}, L_{n}]=(n-m)L_{m+n}, \ \ [M_{m}, M_{n}]=0, \ \ [N_{m}, N_{n}]=0,
\ \ [Y_{m+\frac{1}{2}}, Y_{n+\frac{1}{2}}]=(m-n)M_{m+n+1},$$
$$[L_{m}, M_{n}]=nM_{m+n}, \ \ [L_{m}, N_{n}]=nN_{m+n}, \ \ [L_{m},
Y_{n+\frac{1}{2}}]=(n+\frac{1-m}{2})Y_{m+n+\frac{1}{2}}, $$
$$ \ \
[N_{m}, M_{n}]=2M_{m+n},  \ \ [N_{m},
Y_{n+\frac{1}{2}}]=Y_{m+n+\frac{1}{2}}, \ \ [M_{m},
Y_{n+\frac{1}{2}}]=0,$$ for all $m,n\in\Z$.
\end{definition}
It is clear that ${\tsv}$ is a perfect Lie algebra, i.e.,
$[{\tsv}, {\tsv}]={\tsv}$ , which is finitely generated with a
set of generators
 $\{L_{-2}, L_{-1},L_{1}, L_{2}, N_{1}, Y_{\frac{1}{2}}\}$.

 Define a $\dis\frac{1}{2}\Z$-grading on ${\tsv}$ by
$$deg(L_{n})=n, \  deg(M_{n})=n, \  deg(N_{n})=n, \
deg(Y_{n+\frac{1}{2}})=n+\dis\frac{1}{2},$$ for all $n\in \Z$.
Then
$${\tsv}=\bigoplus\limits_{n\in\Z}{\tsv}_{\frac{n}{2}}
=(\bigoplus\limits_{n\in\Z}{\tsv}_{n})
\bigoplus(\bigoplus\limits_{n\in\Z}{\tsv}_{n+\frac{1}{2}}),$$
where ${\tsv}_{n}=span\{L_{n}, M_{n}, N_{n}\}$
and
 ${\tsv}_{n+\frac{1}{2}}=span\{Y_{n+\frac{1}{2}}\}$ for all $n\in\Z$.


\begin{definition}{\rm (\cite{F})}
Let $G$ be a commutative group,
$\mathfrak{g}=\bigoplus\limits_{g\in G}\mathfrak{g}_{g}$ a
$G$-graded Lie algebra. A $\mathfrak{g}$-module $V$ is called
$G$-graded, if
$$V=\bigoplus\limits_{g\in G}V_{g\in G},
\;\;\; \mathfrak{g}_{g}V_{h}\subseteq V_{g+h},\;\;\; \forall\;
g,h\in G.$$
\end{definition}

\begin{definition}{\rm (\cite{F})}
Let $\mathfrak{g}$ be a Lie algebra and $V$  a
$\mathfrak{g}$-module. A linear map $D:
\mathfrak{g}\longrightarrow V$ is called a derivation, if for any
$x, y\in \mathfrak{g}$, we have
$$D[x, y]=x.D(y)-y.D(x).$$
If there exists some $v\in V$ such that $D:x\mapsto x.v$, then $D$
is called an inner derivation.
\end{definition}
Let $\mathfrak{g}$ be a Lie algebra,  $V$  a module of
$\mathfrak{g}$. Denote by $Der(\mathfrak{g}, V)$ the vector space of
all derivations, $Inn(\mathfrak{g}, V)$ the vector space of all
inner derivations. Set
$$H^{1}(\mathfrak{g}, V) =Der(\mathfrak{g}, V) /Inn(\mathfrak{g}, V).$$
Denote by $Der(\mathfrak{g})$ the derivation algebra of
$\mathfrak{g}$, $Inn(\mathfrak{g})$  the vector space of all inner
derivations of $\mathfrak{g}$. We will prove that  all the
derivations of ${\tsv}$ are inner derivations.

 By Proposition 1.1 in \cite{F}, we have the following lemma.

\begin{lem}\label{L3.2}
$$Der({\tsv})=\bigoplus\limits_{n\in\Z}Der({\tsv})_{\frac{n}{2}},$$
where
$Der({\tsv})_{\frac{n}{2}}({\tsv}_{\frac{m}{2}})\subseteq
{\tsv}_{\frac{m+n}{2}}$ for all $m,n\in\Z$.
\end{lem}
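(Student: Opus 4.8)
The plan is to reconstruct the general principle behind Proposition 1.1 of \cite{F} in the present setting: the derivation algebra of a finitely generated graded Lie algebra inherits the grading. Write $G=\frac{1}{2}\Z$ for the grading group and let $\pi_{k}\colon{\tsv}\to{\tsv}_{k}$ denote the projection onto the homogeneous component of degree $k$, for $k\in G$. For a derivation $D\in Der({\tsv})$ and a degree $g\in G$, I would define a linear map $D_{g}\colon{\tsv}\to{\tsv}$ by setting, on a homogeneous element $x\in{\tsv}_{h}$,
$$D_{g}(x)=\pi_{g+h}\bigl(D(x)\bigr),$$
and extending linearly. By construction $D_{g}({\tsv}_{h})\subseteq{\tsv}_{g+h}$; taking $g=\frac{n}{2}$ and $h=\frac{m}{2}$ this is exactly the degree condition $D_{n/2}({\tsv}_{m/2})\subseteq{\tsv}_{(m+n)/2}$ asserted in the statement, so once $D_{g}$ is shown to be a derivation it lies in $Der({\tsv})_{g}$.

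First I would check that each $D_{g}$ is genuinely a derivation. The bracket is degree-additive (immediate from the brackets of Definition \ref{D1.1} together with the grading $deg(L_n)=deg(M_n)=deg(N_n)=n$, $deg(Y_{n+1/2})=n+\frac{1}{2}$): for homogeneous $x\in{\tsv}_{h}$ and $y\in{\tsv}_{h'}$ one has $[x,y]\in{\tsv}_{h+h'}$. Applying $\pi_{g+h+h'}$ to the Leibniz identity $D[x,y]=[Dx,y]+[x,Dy]$ and matching homogeneous components, only the degree-$g$ parts of $Dx$ and $Dy$ survive on the right, yielding $D_{g}[x,y]=[D_{g}x,y]+[x,D_{g}y]$. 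This is a routine bookkeeping step, and it gives $D_{g}\in Der({\tsv})_{g}$ for every $g\in G$.

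The heart of the argument, and the step where finite generation enters, is to show that $D=\sum_{g}D_{g}$ with only finitely many nonzero summands, so that the decomposition is a genuine direct sum and not merely a product. Recall that ${\tsv}$ is finitely generated by the homogeneous set $S=\{L_{-2},L_{-1},L_{1},L_{2},N_{1},Y_{\frac{1}{2}}\}$. Each $s\in S$ has $D(s)$ equal to a finite sum of homogeneous components, so $D_{g}(s)=0$ for all but finitely many $g$; let $F\subset G$ be the finite union over $s\in S$ of the degrees actually occurring. For $g\notin F$ the derivation $D_{g}$ annihilates every generator, and since a derivation vanishing on a generating set vanishes identically (by the Leibniz rule and induction on bracket length), we get $D_{g}=0$. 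Hence $D=\sum_{g\in F}D_{g}$ is a finite sum, which proves $Der({\tsv})=\bigoplus_{g\in G}Der({\tsv})_{g}$ with the stated degree property.

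The main obstacle is precisely this finiteness point. The definition of $D_{g}$ and the verification that it is a homogeneous derivation are formal, and without any hypothesis they only embed $Der({\tsv})$ into the direct product $\prod_{g}Der({\tsv})_{g}$; the decomposition of an arbitrary derivation could then fail to terminate. It is the finite generation of ${\tsv}$ that forces all but finitely many homogeneous components to vanish and upgrades the product to the direct sum. Everything else is the standard degree-matching computation, which I would run once against the brackets of Definition \ref{D1.1} to confirm degree-additivity.
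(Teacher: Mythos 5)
Your proposal is correct and is precisely the standard argument underlying the result the paper invokes: the paper proves Lemma \ref{L3.2} simply by citing Proposition 1.1 of \cite{F}, and your reconstruction (homogeneous components $D_g$ via projections, degree-additivity of the bracket, and finite generation by $\{L_{-2},L_{-1},L_{1},L_{2},N_{1},Y_{\frac{1}{2}}\}$ to force all but finitely many $D_g$ to vanish) is exactly the content of that cited proposition. You have correctly identified the finiteness step as the only point where a hypothesis beyond the grading is needed.
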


\qed

\begin{lem}\label{L3.3}
$H^{1}({\tsv}_{0},{\tsv}_{\frac{n}{2}})=0,\quad\forall\;n\in\Z\setminus\{0\}.$
\end{lem}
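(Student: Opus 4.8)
The plan is to use the fact that $L_0$ is central in $\tsv_0$ and acts as a nonzero scalar on each module $\tsv_{\frac{n}{2}}$ with $n\neq 0$; this collapses any derivation to an inner one by a single application of the derivation identity.

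First I would read off the structure of $\tsv_0=\mathrm{span}\{L_0,M_0,N_0\}$ from Definition~\ref{D1.1}. The relevant brackets are $[L_0,M_0]=0$, $[L_0,N_0]=0$ and $[N_0,M_0]=2M_0$, so that $L_0$ lies in the center of $\tsv_0$, the only nontrivial bracket being the one between $N_0$ and $M_0$.

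Next I would determine how $L_0$ acts on $\tsv_{\frac{n}{2}}$. Using $[L_0,L_k]=kL_k$, $[L_0,M_k]=kM_k$, $[L_0,N_k]=kN_k$ and $[L_0,Y_{k+\frac12}]=(k+\frac12)Y_{k+\frac12}$, one checks that $L_0$ acts on $\tsv_{\frac{n}{2}}$ as the scalar $\frac{n}{2}$. The key observation is that this scalar never vanishes when $n\neq 0$: for even $n=2k$ it is $k\neq 0$, while for odd $n=2k+1$ it is $k+\frac12$, which cannot be zero because $k$ is an integer.

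With these two facts the conclusion is immediate. Given any $D\in Der(\tsv_0,\tsv_{\frac{n}{2}})$, centrality of $L_0$ gives $[L_0,x]=0$ for all $x\in\tsv_0$, and the derivation identity yields
\[
0=D([L_0,x])=L_0.D(x)-x.D(L_0)=\tfrac{n}{2}\,D(x)-x.D(L_0),
\]
so that $D(x)=x.v$ with $v:=\tfrac{2}{n}\,D(L_0)\in\tsv_{\frac{n}{2}}$. Hence $D$ is inner and $H^{1}(\tsv_0,\tsv_{\frac{n}{2}})=0$. The computation is routine, so I anticipate no real obstacle; the only point demanding care is the non-vanishing of the eigenvalue $\frac{n}{2}$ in the half-integer case, which is precisely where the integrality of the $\frac12\Z$-grading enters.
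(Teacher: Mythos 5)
Your proof is correct, and it takes a genuinely different (and shorter) route than the paper's. You exploit the fact that $L_{0}$ is central in ${\tsv}_{0}$ while acting on ${\tsv}_{\frac{n}{2}}$ as the nonzero scalar $\frac{n}{2}$: applying the derivation identity to $[L_{0},x]=0$ immediately yields $D(x)=x.\bigl(\tfrac{2}{n}D(L_{0})\bigr)$ for every $x\in{\tsv}_{0}$, so $D$ is inner with $v=\tfrac{2}{n}D(L_{0})$ (and one checks $D(L_{0})=L_{0}.v$ as well, so the formula is consistent on all of ${\tsv}_{0}$). This is the standard argument that a central element acting invertibly on a module kills cohomology, and it dispenses with any case analysis. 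The paper instead proceeds by brute force: it splits into the integer-graded and half-integer-graded cases, expands $\vp(L_{0}),\vp(M_{0}),\vp(N_{0})$ in the basis of ${\tsv}_{m}$ (resp.\ ${\tsv}_{m+\frac{1}{2}}$), uses the cocycle conditions on the pairs $(L_{0},M_{0})$ and $(L_{0},N_{0})$ to solve for the coefficients, and then exhibits an explicit element $X_{m}$ (resp.\ $X_{m+\frac{1}{2}}$) realizing $\vp$ as an inner derivation. Both arguments are valid; yours is more conceptual and generalizes verbatim to any graded piece on which the degree operator acts by a nonzero scalar, while the paper's computation has the minor virtue of displaying the inner element in coordinates. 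Your one point requiring care --- the non-vanishing of $n+\tfrac{1}{2}$ for $n\in\Z$ --- is correctly identified and handled.
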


\begin{proof} We have to prove
$$H^{1}({\tsv}_{0},{\tsv}_{n})=0,
\ \ \forall n\in\Z\setminus\{0\},$$
$$H^{1}({\tsv}_{0},{\tsv}_{n+\frac{1}{2}})=0,
\ \ \forall n\in\Z.$$

 (1) For $m\neq 0$, let $\vp:{\tsv}_{0}\longrightarrow
{\tsv}_{m} $ be a derivation. Assume that
\begin{eqnarray*}
& &\vp(L_{0})=a_{1}L_{m}+b_{1}M_{m}+c_{1}N_{m},
\\
& &\vp(M_{0})=a_{2}L_{m}+b_{2}M_{m}+c_{2}N_{m},
\\
& &\vp(N_{0})=a_{3}L_{m}+b_{3}M_{m}+c_{3}N_{m},
\end{eqnarray*}
where $a_{i}, b_{i}, c_{i}\in\C, i=1, 2, 3 $. Since
$$\vp[L_{0},
M_{0}]=[\vp(L_{0}), M_{0}]+[L_{0}, \vp(M_{0})],$$ we have
$$a_{2}mL_{m}+(b_{2}m+2c_{1})M_{m}+c_{2}mN_{m}=0.$$
So $a_{2}=0, c_{1}=-\dis\frac{1}{2} b_{2}m, c_{2}=0$ and
$\vp(M_{0})=b_{2}M_{m}.$  Since
$$\vp[L_{0},
N_{0}]=[\vp(L_{0}),N_{0}]+[L_{0}, \vp(N_{0})],$$
we have
$$a_{3}mL_{m}+(b_{3}m-2b_{1})M_{m}+c_{3}mN_{m}=0.$$
Then $a_{3}=0, b_{1}=\dis\frac{1}{2}b_{3}m, c_{3}=0$ and
$\vp(N_{0})=b_{3}M_{m}.$ Therefore, we get
$$\vp(L_{0})=a_{1}L_{m}+\dis\frac{1}{2}b_{3}mM_{m}-\dis\frac{1}{2}
b_{2}mN_{m},\quad  \vp(M_{0})=b_{2}M_{m}, \quad
\vp(N_{0})=b_{3}M_{m}.$$ Let
$X_{m}=\dis\frac{a_{1}}{m}L_{m}+\dis\frac{1}{2}b_{3}M_{m}-\dis\frac{1}{2}
b_{2}N_{m}$, we have
$$\vp(L_{0})=[L_{0}, X_{m}],\quad  \vp(M_{0})=[M_{0}, X_{m}], \quad \vp(N_{0})=[N_{0}, X_{m}].$$
Then $\vp\in
Inn({\tsv}_{0},{\tsv}_{m})$.
Therefore,
$$H^{1}({\tsv}_{0},{\tsv}_{m})=0, \quad \forall\;
m\in\Z\setminus\{0\}.$$

(2) For all $m\in\Z$, let
$\vp:{\tsv}_{0}\longrightarrow
{\tsv}_{m+\frac{1}{2}} $ be a derivation.
Assume that
$$\vp(L_{0})=aY_{m+\frac{1}{2}}, \quad \vp(M_{0})=bY_{m+\frac{1}{2}}, \quad \vp(N_{0})=cY_{m+\frac{1}{2}},$$
for some $a, b, c\in\C $. Because $\vp[L_{0}, M_{0}]=[\vp(L_{0}),
M_{0}]+[L_{0}, \vp(M_{0})],$ we have
$$0=[L_{0}, \vp(M_{0})]= [L_{0}, bY_{m+\frac{1}{2}}]=b(m+\frac{1}{2})Y_{m+\frac{1}{2}}.  $$
So $b=0$ and $\vp(M_{0})=0$. Since $\vp[L_{0},
N_{0}]=[\vp(L_{0}),N_{0}]+[L_{0}, \vp(N_{0})],$ we have
$$0=[aY_{m+\frac{1}{2}},N_{0}]+[L_{0}, cY_{m+\frac{1}{2}}]=(c(m+\frac{1}{2})-a)Y_{m+\frac{1}{2}}.$$
Then $a=c(m+\frac{1}{2})$. Hence, we get
$$\vp(L_{0})=c(m+\frac{1}{2})Y_{m+\frac{1}{2}}, \quad \vp(M_{0})=0, \quad \vp(N_{0})=cY_{m+\frac{1}{2}}.$$
Letting $X_{m+\frac{1}{2}}=cY_{m+\frac{1}{2}}$, we obtain
$$\vp(L_{0})=[L_{0}, X_{m+\frac{1}{2}}],\quad  \vp(M_{0})=[M_{0}, X_{m+\frac{1}{2}}],
 \quad \vp(N_{0})=[N_{0}, X_{m+\frac{1}{2}}].$$
Then $\vp\in
Inn({\tsv}_{0},{\tsv}_{m+\frac{1}{2}})$.
Therefore,
$$H^{1}({\tsv}_{0},{\tsv}_{m+\frac{1}{2}})=0, \quad \forall\;
m\in\Z.$$
\end{proof}

\begin{lem}\label{L3.4}
$Hom_{{\tsv}_{0}}({\tsv}_{\frac{m}{2}}, {\tsv}_{\frac{n}{2}})=0$
for all $m,n\in\Z$, $m\neq n.$
\end{lem}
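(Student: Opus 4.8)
The plan is to exploit the single grading element $L_0\in\tsv_0$, whose adjoint action recovers the $\frac12\Z$-grading. First I would record, straight from Definition~\ref{D1.1}, that for every integer $k$ one has $[L_0,L_k]=kL_k$, $[L_0,M_k]=kM_k$, $[L_0,N_k]=kN_k$, and $[L_0,Y_{k+\frac12}]=(k+\frac12)Y_{k+\frac12}$. Hence $L_0$ acts on the entire homogeneous component $\tsv_{\frac m2}$ as the scalar $\frac m2$; equivalently, the grading of $\tsv$ is precisely the eigenspace decomposition of $\operatorname{ad}(L_0)$. Note also that each $\tsv_{\frac m2}$ is genuinely a $\tsv_0$-module under the adjoint action, since $[\tsv_0,\tsv_{\frac m2}]\subseteq\tsv_{\frac m2}$.

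Then, for a homomorphism $f\in\Hom_{\tsv_0}(\tsv_{\frac m2},\tsv_{\frac n2})$ and any $x\in\tsv_{\frac m2}$, I would use that $f$ intertwines the $L_0$-action: computing $L_0.f(x)$ in two ways gives
$$\tfrac n2\,f(x)=L_0.f(x)=f(L_0.x)=f\big(\tfrac m2\,x\big)=\tfrac m2\,f(x),$$
so that $\tfrac{m-n}{2}f(x)=0$. Since $m\neq n$ this forces $f(x)=0$ for every $x$, i.e.\ $f=0$, which is the assertion.

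I expect no real obstacle here: the content is simply that distinct graded pieces are distinct $\operatorname{ad}(L_0)$-eigenspaces, so no nonzero $\tsv_0$-equivariant map can relate them. The one thing to keep in mind is purely bookkeeping — the components come in two shapes, the three-dimensional $\tsv_k=\mathrm{span}\{L_k,M_k,N_k\}$ when $m=2k$ is even and the one-dimensional $\tsv_{k+\frac12}=\mathrm{span}\{Y_{k+\frac12}\}$ when $m=2k+1$ is odd — but the eigenvalue computation above is uniform in both, so a single line settles all cases. In contrast to Lemma~\ref{L3.3}, the $M_0$- and $N_0$-actions are not even needed; $L_0$ alone separates the degrees.
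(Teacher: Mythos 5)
Your argument is exactly the paper's: both proofs specialize the $\tsv_0$-equivariance to the single element $L_0$, which acts on $\tsv_{\frac m2}$ as the scalar $\frac m2$, and conclude $\frac n2 f(x)=\frac m2 f(x)$, hence $f=0$ when $m\neq n$. The proposal is correct and takes essentially the same route, just with the additional (accurate) remark that the grading is the $\operatorname{ad}(L_0)$-eigenspace decomposition.
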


\begin{proof} Let $f\in
Hom_{{\tsv}_{0}}({\tsv}_{\frac{m}{2}}, {\tsv}_{\frac{n}{2}})$,
where $m\neq n$. Then for any $E_{0}\in {\tsv}_{0},
E_{\frac{m}{2}}\in {\tsv}_{\frac{m}{2}}$, we have
$$f([E_{0}, E_{\frac{m}{2}}])=[E_{0}, f(E_{\frac{m}{2}})].$$
Then $f([L_{0}, E_{\frac{m}{2}}])=[L_{0}, f(E_{\frac{m}{2}})]$,
i.e.,
$$\dis\frac{m}{2}f( E_{\frac{m}{2}})=[L_{0}, f(E_{\frac{m}{2}})]=\dis\frac{n}{2}f( E_{\frac{m}{2}}).$$
So we have $f( E_{\frac{m}{2}})=0$ for all $m\neq n$. Therefore,
we have $f=0$.
\end{proof}

By Lemma \ref{L3.3}-\ref{L3.4} and Proposition 1.2 in \cite{F}, we
have the following Lemma.

\begin{lem}\label{L3.5}
$Der({\tsv})=Der({\tsv})_{0}+Inn({\tsv}).$
\qed
\end{lem}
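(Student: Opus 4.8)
The plan is to prove that $Der(\tsv) = Der(\tsv)_0 + Inn(\tsv)$ by combining the grading decomposition of Lemma~\ref{L3.2} with the vanishing results of Lemmas~\ref{L3.3} and~\ref{L3.4}. By Lemma~\ref{L3.2}, every derivation decomposes as a sum of homogeneous derivations, so it suffices to analyze a single homogeneous component $D \in Der(\tsv)_{\frac{n}{2}}$ for fixed $n \neq 0$ and show that it is inner; the degree-zero part $Der(\tsv)_0$ is simply collected separately on the right-hand side. The central tool is Proposition~1.2 of~\cite{F}, which relates homogeneous derivations of a graded Lie algebra to the cohomology $H^1(\tsv_0, \tsv_{\frac{n}{2}})$ together with the $\tsv_0$-module homomorphism spaces $Hom_{\tsv_0}(\tsv_{\frac{m}{2}}, \tsv_{\frac{m+n}{2}})$.

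First I would fix a nonzero $n$ and restrict a homogeneous derivation $D$ of degree $\frac{n}{2}$ to the degree-zero subalgebra $\tsv_0$. Its restriction $D|_{\tsv_0}$ is a derivation from $\tsv_0$ into the module $\tsv_{\frac{n}{2}}$, hence by Lemma~\ref{L3.3} it is inner: there exists $X \in \tsv_{\frac{n}{2}}$ with $D|_{\tsv_0} = \ad(X)|_{\tsv_0}$. Replacing $D$ by $D - \ad(X)$ — which only changes $D$ by an inner derivation — I may assume without loss of generality that $D$ vanishes on $\tsv_0$. The next step is to show that such a $D$ must vanish identically. For any homogeneous $E_{\frac{m}{2}} \in \tsv_{\frac{m}{2}}$ and any $E_0 \in \tsv_0$, the derivation property gives $D([E_0, E_{\frac{m}{2}}]) = [E_0, D(E_{\frac{m}{2}})]$ since $D(E_0) = 0$. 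This exhibits the map $E_{\frac{m}{2}} \mapsto D(E_{\frac{m}{2}})$ as a $\tsv_0$-module homomorphism from $\tsv_{\frac{m}{2}}$ into $\tsv_{\frac{m+n}{2}}$. Because $n \neq 0$, these two graded pieces differ, so Lemma~\ref{L3.4} forces $D = 0$ on every $\tsv_{\frac{m}{2}}$, i.e., $D$ is inner.

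Assembling these facts, an arbitrary derivation $D = \sum_{n} D_{\frac{n}{2}}$ splits as its degree-zero component $D_0$ plus a sum of nonzero-degree components, each of which is inner by the argument above; thus $Der(\tsv) \subseteq Der(\tsv)_0 + Inn(\tsv)$, and the reverse inclusion is trivial. The main obstacle is ensuring that the invocation of Proposition~1.2 of~\cite{F} is applied with the correct module structure and grading conventions, and that the subtraction of $\ad(X)$ genuinely kills the restriction to $\tsv_0$ without reintroducing components in unwanted degrees; since $\ad(X)$ for $X \in \tsv_{\frac{n}{2}}$ is itself homogeneous of degree $\frac{n}{2}$, this compatibility holds, but it is the point requiring care. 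The remaining steps are formal consequences of the two vanishing lemmas.
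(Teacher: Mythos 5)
Your argument is correct and is essentially the paper's proof: the paper disposes of this lemma in one line by citing Lemmas \ref{L3.3}--\ref{L3.4} together with Proposition~1.2 of \cite{F}, and what you have written is precisely an unpacking of that proposition in this special case (restrict a homogeneous component of nonzero degree to $\tsv_0$, kill it by an inner derivation via Lemma \ref{L3.3}, then observe the remainder is a degree-shifting $\tsv_0$-module map, which vanishes by Lemma \ref{L3.4}). The only point your unpacking glosses over is that the decomposition $D=\sum_n D_{\frac{n}{2}}$ must have only finitely many nonzero terms for the sum of the resulting inner derivations to again be inner; this is where the finite generation of $\tsv$ enters, and it is exactly what the cited Proposition~1.2 of \cite{F} takes care of.
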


\begin{lem}\label{L3.6} For any $D\in Der({\tsv})_{0}$,
there exist some $a,b,c\in \C$ such that
$$D=ad(aL_{0}-\dis\frac{c}{2}M_{0}+(b-\frac{a}{2})N_{0}).$$
Therefore, $Der({\tsv})_{0} \subseteq
Inn({\tsv}).$
\end{lem}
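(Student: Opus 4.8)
The plan is to exploit the degree hypothesis throughout. Since $D\in Der(\tsv)_0$ satisfies $D(\tsv_{\frac{m}{2}})\subseteq\tsv_{\frac{m}{2}}$ by Lemma \ref{L3.2}, for each $n$ I may write
\[ D(L_n)=p_nL_n+q_nM_n+r_nN_n,\quad D(M_n)=u_nL_n+v_nM_n+w_nN_n, \]
\[ D(N_n)=x_nL_n+y_nM_n+z_nN_n,\quad D(Y_{n+\frac{1}{2}})=\lambda_nY_{n+\frac{1}{2}}, \]
with all coefficients in $\C$ (the last line because $\tsv_{n+\frac{1}{2}}$ is one-dimensional). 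The whole argument consists in pinning these scalars down and recognizing the outcome as $ad(h)$ with $h\in\tsv_0=\mathrm{span}\{L_0,M_0,N_0\}$.

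First I would determine the $\lambda_n$. Applying $D$ to $[L_{1},Y_{n+\frac{1}{2}}]=nY_{n+1+\frac{1}{2}}$ and using $[M_1,Y_{n+\frac{1}{2}}]=0$, $[N_1,Y_{n+\frac{1}{2}}]=Y_{n+1+\frac{1}{2}}$ yields $n\lambda_{n+1}=p_1n+r_1+n\lambda_n$; taking $n=0$ forces $r_1=0$ and leaves $\lambda_{n+1}-\lambda_n=p_1$ for $n\neq0$. The same computation with $[L_{-1},Y_{n+\frac{1}{2}}]=(n+1)Y_{n-1+\frac{1}{2}}$ forces $r_{-1}=0$ and $\lambda_n-\lambda_{n-1}=-p_{-1}$ for $n\neq-1$. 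Comparing the two relations on their common range gives $p_{-1}=-p_1$, and the $L_{-1}$ relation evaluated at $n=0$ supplies the single missing step $\lambda_1-\lambda_0=p_1$. Hence $\lambda_n=an+b$ with $a:=p_1$, $b:=\lambda_0$. I expect this linearity of $\lambda_n$ to be the only real content of the proof; the fiddly point is getting the boundary indices $n=0,-1$ to close the recurrence up, which is exactly where the two generators $L_{\pm1}$ must be played off against each other.

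The rest is a reduction by an inner derivation. A direct check gives $ad(aL_0+(b-\frac{a}{2})N_0)(Y_{n+\frac{1}{2}})=(an+b)Y_{n+\frac{1}{2}}=\lambda_nY_{n+\frac{1}{2}}$, so $\tilde D:=D-ad(aL_0+(b-\frac{a}{2})N_0)$ is again a degree-$0$ derivation that annihilates every $Y_{n+\frac{1}{2}}$. Now a short cascade collapses $\tilde D$. Applying it to $[Y_{m+\frac{1}{2}},Y_{n+\frac{1}{2}}]=(m-n)M_{m+n+1}$, whose right-hand sides span all $M_k$, kills every $M_k$. Applying it to $[L_m,Y_{n+\frac{1}{2}}]$ and to $[N_m,Y_{n+\frac{1}{2}}]=Y_{m+n+\frac{1}{2}}$, and using that the $Y$-images vanish, forces $[\tilde D(L_m),Y_{n+\frac{1}{2}}]=[\tilde D(N_m),Y_{n+\frac{1}{2}}]=0$ for all $n$; since the coefficients of $Y_{m+n+\frac{1}{2}}$ are affine in $n$, this kills the $L$- and $N$-components, so $\tilde D(L_m),\tilde D(N_m)\in\C M_m$.

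Finally, write $\tilde D(L_m)=q'_mM_m$ and $\tilde D(N_m)=y'_mM_m$ and feed them into $[L_m,N_n]=nN_{m+n}$: this gives $n(y'_{m+n}-y'_n)=-2q'_m$. The case $n=0$ yields $q'_m=0$ for all $m$, whence $\tilde D(L_m)=0$, and then $y'_m=:c$ is a constant independent of $m$. Thus $\tilde D$ sends $N_m\mapsto cM_m$ and annihilates everything else, which is precisely $ad(-\frac{c}{2}M_0)$, because $[-\frac{c}{2}M_0,N_m]=cM_m$ while $-\frac{c}{2}M_0$ brackets trivially with all $L_m,M_m,Y_{n+\frac{1}{2}}$. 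Adding back the subtracted term gives $D=ad\big(aL_0-\frac{c}{2}M_0+(b-\frac{a}{2})N_0\big)$, which is the assertion, and in particular $Der(\tsv)_0\subseteq Inn(\tsv)$.
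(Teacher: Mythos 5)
Your proof is correct, and it is organized quite differently from the paper's. The paper proceeds by brute force: it writes out the unknown matrix coefficients $a^m_{ij}$, $b^{m+\frac{1}{2}}$ for all generators at once, grinds through a long list of bracket identities to solve for every coefficient, and only at the very end recognizes the resulting formulas as $ad(aL_{0}-\frac{c}{2}M_{0}+(b-\frac{a}{2})N_{0})$. You instead pin down only the $Y$-eigenvalues $\lambda_n$ first (using that each $\tsv_{n+\frac{1}{2}}$ is one-dimensional), subtract the inner derivation $ad(aL_0+(b-\frac{a}{2})N_0)$ to kill the action on $Y$, and then let the structure of the algebra do the work: since the $Y$'s bracket onto all the $M$'s, and since $[\,\cdot\,,Y_{n+\frac{1}{2}}]$ detects the $L$- and $N$-components of a degree-zero element through an affine function of $n$, the reduced derivation collapses to $N_m\mapsto cM_m$, which is $ad(-\frac{c}{2}M_0)$. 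This "normalize by inner derivations, then cascade" scheme is shorter, less error-prone, and makes it transparent why exactly the three parameters $a,b,c$ survive. One small index slip: the missing difference $\lambda_1-\lambda_0$ in your first recurrence is supplied by the $L_{-1}$ relation at $n=1$ (which reads $\lambda_0-\lambda_1=p_{-1}=-p_1$), not at $n=0$ (which gives $\lambda_{-1}-\lambda_0=p_{-1}$, the step missing from the \emph{second} recurrence); either way the two recurrences patch each other's single gap, so the conclusion $\lambda_n=an+b$ stands.
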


\begin{proof} For any $D\in Der({\tsv})_{0}$, assume
that for all $m\in\Z$,
\begin{eqnarray*}
& &D(L_{m})=a^{m}_{11}L_{m}+a^{m}_{12}M_{m}+a^{m}_{13}N_{m},
\\
& &D(M_{m})=a^{m}_{21}L_{m}+a^{m}_{22}M_{m}+a^{m}_{23}N_{m},
\\
& &D(N_{m})=a^{m}_{31}L_{m}+a^{m}_{32}M_{m}+a^{m}_{33}N_{m},
\\
& &D(Y_{m+\frac{1}{2}})=b^{m+\frac{1}{2}}Y_{m+\frac{1}{2}},
\end{eqnarray*}
where $a_{ij}^{m},\; b^{m+\frac{1}{2}}\in\C, \;i, j=1, 2, 3$. For
any $E_{\frac{m}{2}}\in
{\tsv}_{\frac{m}{2}},\; E_{\frac{n}{2}}\in
{\tsv}_{\frac{n}{2}}$, we have
$$D[E_{\frac{m}{2}},E_{\frac{n}{2}} ]=[D(E_{\frac{m}{2}}),E_{\frac{n}{2}} ]+[E_{\frac{m}{2}}, D(E_{\frac{n}{2}})].$$
In particular, $D[L_{0}, E_{\frac{m}{2}}]=[D(L_{0}),
E_{\frac{m}{2}}]+[L_{0}, D(E_{\frac{m}{2}})].$ Then
$$\frac{m}{2}D(E_{\frac{m}{2}})=[D(L_{0}), E_{\frac{m}{2}}]+\frac{m}{2}D(E_{\frac{m}{2}}).$$
So $$[D(L_{0}), E_{\frac{m}{2}}]=0.$$ Since  $[D(L_{0}), L_{1}]=0,
[D(L_{0}), M_{0}]=0$ and $[D(L_{0}), N_{0}]=0$, we can deduce that
$$a^{0}_{11}=a^{0}_{12}=a^{0}_{13}=0.$$
So $D(L_{0})=0.$ Because $D[M_{0}, L_{m} ]=[D(M_{0}), L_{m}
]+[M_{0}, D(L_{m})]$, we have
$$a^{0}_{21}mL_{m}-2a^{m}_{13}M_{m}=0.$$
Then
$$a^{0}_{21}=0,\;a^{m}_{13}=0,\;D(M_{0})=a^{0}_{22}M_{0}+a^{0}_{23}N_{0}.$$
By the fact that  $D[M_{0},N_{m}]=[D(M_{0}),N_{m} ]+[M_{0},
D(N_{m})]$, we have
$$a^{m}_{21}L_{m}+a^{m}_{22}M_{m}+a^{m}_{23}N_{m}=(a^{0}_{22}+a^{m}_{33})M_{m}.$$
Therefore, $$a^{m}_{21}=0,\;\;\;
a^{m}_{22}=a^{0}_{22}+a^{m}_{33},\;\;\; a^{m}_{23}=0.$$ Then
$$a^{0}_{33}=0,\quad D(M_{m})=a^{m}_{22}M_{m}=(a^{0}_{22}+a^{m}_{33})M_{m}.$$
Since  $D[N_{0},L_{m}]=[D(N_{0}),L_{m} ]+[N_{0}, D(L_{m})]$, we
have
$$a^{0}_{31}mL_{m}+2a^{m}_{12}M_{m}=0.$$
Then $a^{0}_{31}m=0, 2a^{m}_{12}=0$ for all $m\in\Z$. Therefore,
$$a^{0}_{31}=0,\; \;  a^{m}_{12}=0,\quad   D(N_{0})=a^{0}_{32}M_{0}.$$
According to $D[N_{m},N_{n}]=[D(N_{m}),N_{n}]+[N_{m},D(N_{n})]$,
we obtain
$$(a^{m}_{31}n-a^{n}_{31}m)N_{m+n}+2(a^{n}_{32}-a^{m}_{32})M_{m+n}=0.$$
Then $a^{m}_{31}n=a^{n}_{31}m, a^{n}_{32}=a^{m}_{32}$ for all $m,
n\in\Z$. Hence, we have
$$a^{m}_{31}=a^{1}_{31}m, \;\;    a^{m}_{32}=a^{0}_{32}, \quad \forall\; m\in\Z.$$
By the following two relations
\begin{eqnarray*}
& &D[Y_{m+\frac{1}{2}},Y_{n+\frac{1}{2}}]
=[D(Y_{m+\frac{1}{2}}),Y_{n+\frac{1}{2}}]+[Y_{m+\frac{1}{2}},D(Y_{n+\frac{1}{2}})],
\\
& &D[L_{m},L_{n}]=[D(L_{m}),L_{n}]+[L_{m},D(L_{n})],
\end{eqnarray*}
we have
\begin{equation}\label{6.1}
a^{m+n+1}_{22}=b^{m+\frac{1}{2}}+b^{n+\frac{1}{2}},\quad\quad\;m\neq\;n.
\end{equation}
\begin{equation}\label{6.2}
a^{m+n}_{11}=a^{m}_{11}+a^{n}_{11}, \quad\quad m\neq n.
\end{equation}
It is easy to see that $a^{-m}_{11}=-a^{m}_{11}$ for all $m\in\Z.$
Let $n=1$ in (\ref{6.2}), then
$$a^{m+1}_{11}=a^{m}_{11}+a^{1}_{11}, \quad\quad m\neq 1.$$
By induction on $m\in\Z^{+}$ and $m\geq 3$, we have
$$a^{m}_{11}=a^{2}_{11}+(m-2)a^{1}_{11}, \quad\quad m\geq 3.$$
Let $m=4, n=-2$ in (\ref{6.2}), then we have
$a^{2}_{11}=2a^{1}_{11}.$ Therefore,
$$a^{m}_{11}=ma^{1}_{11}, \quad\quad \forall\; m\in\Z.$$
Since $D[L_{m},M_{n}]=[D(L_{m}),M_{n}]+[L_{m},D(M_{n})]$ and
$$D[L_{m},N_{n}]=[D(L_{m}),N_{n}]+[L_{m},D(N_{n})],$$ we get
\begin{equation}\label{6.3}
a^{m+n}_{22}=ma^{1}_{11}+a^{n}_{22}, \quad\quad n\neq 0,
\end{equation}
$$a^{1}_{31}(m+n)=a^{1}_{31}(n-m),\;\;\;  a^{m+n}_{33}=ma^{1}_{11}+a^{n}_{33},\quad\;n\neq 0.$$
Then we can deduce that
$$a^{1}_{31}=0,\;\;\;  a^{m+1}_{33}=ma^{1}_{11}+a^{1}_{33}.$$
Note $a^{0}_{33}=0$, then $a^{1}_{11}=a^{1}_{33}.$ Therefore,
$$a^{m}_{33}=ma^{1}_{11}, \;\;\;a^{m}_{22}=a^{0}_{22}+ma^{1}_{11}, \quad\quad \forall \; m\in\Z.$$
Because
$D[L_{m},Y_{n+\frac{1}{2}}]=[D(L_{m}),Y_{n+\frac{1}{2}}]+[L_{m},D(Y_{n+\frac{1}{2}})]$,
we have
$$b^{m+n+\frac{1}{2}}=ma^{1}_{11}+b^{n+\frac{1}{2}},\;\;\;  n+\frac{1-m}{2}\neq 0.$$
Let $m=1$, then
$$b^{n+1+\frac{1}{2}}=a^{1}_{11}+b^{n+\frac{1}{2}},\;\;\;  n\neq 0.$$
By (\ref{6.1}), we get
\begin{equation}\label{6.4}
a^{0}_{22}+(m+n+1)a^{1}_{11}=b^{m+\frac{1}{2}}+b^{n+\frac{1}{2}},\quad\quad\;m\neq\;n.
\end{equation}
Let $m=0,n=1$ in (\ref{6.4}), then we have
$$a^{0}_{22}+a^{1}_{11}=2b^{\frac{1}{2}}.$$
Let $n=0$ in (\ref{6.4}), then we obtain
$b^{m+\frac{1}{2}}=b^{\frac{1}{2}}+ ma^{1}_{11}$ for all $m\in\Z$.
Set $a^{1}_{11}=a, b^{\frac{1}{2}}=b, a^{0}_{32}=c$, then
$a^{0}_{22}=2b-a$ and
\begin{eqnarray*}
& &D(L_{m})=maL_{m},\quad\quad\quad \;D(M_{m})=(2b-a+ma)M_{m},
\\
& &D(N_{m})=cM_{m}+maN_{m},\; D(Y_{m+\frac{1}{2}})=(b+
ma)Y_{m+\frac{1}{2}},
\end{eqnarray*}
for all $m\in\Z$. Then we can deduce that
$$D=ad(aL_{0}-\dis\frac{c}{2}M_{0}+(b-\frac{a}{2})N_{0}).$$
\end{proof}

From the above lemmas, we obtain the following theorem.
\begin{theorem}\label{T3.1}
$Der({\tsv})=Inn({\tsv}),$ i.e., $\rm H^1(\tsv,\tsv)=0$.
\end{theorem}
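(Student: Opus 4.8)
The plan is to assemble Theorem~\ref{T3.1} directly from the structural lemmas already established, since the heavy lifting has been distributed across Lemmas~\ref{L3.2} through~\ref{L3.6}. The goal is to show $Der(\tsv)\subseteq Inn(\tsv)$; the reverse inclusion is automatic, and combined with the fact (proved earlier in section~2) that the center of $\tsv$ is zero, this yields $H^1(\tsv,\tsv)=0$. The strategy is to exploit the $\frac{1}{2}\Z$-grading: by Lemma~\ref{L3.2} every derivation decomposes as a sum of homogeneous derivations $D=\sum_{n}D_{\frac{n}{2}}$ with $D_{\frac{n}{2}}(\tsv_{\frac{m}{2}})\subseteq\tsv_{\frac{m+n}{2}}$, so it suffices to handle each graded piece separately.

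First I would dispose of the nonzero-degree components. Lemma~\ref{L3.5}, which was obtained by feeding Lemmas~\ref{L3.3} and~\ref{L3.4} into the machinery of Proposition~1.2 of~\cite{F}, already gives $Der(\tsv)=Der(\tsv)_0+Inn(\tsv)$. Conceptually, the vanishing of $H^1(\tsv_0,\tsv_{\frac{n}{2}})$ for $n\neq 0$ (Lemma~\ref{L3.3}) forces each homogeneous derivation of nonzero degree, when restricted to $\tsv_0$, to differ from an inner derivation by something that must vanish, and Lemma~\ref{L3.4} on the vanishing of the $\tsv_0$-module homomorphisms between distinct graded components pins down the rest. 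Thus modulo inner derivations, every derivation is reduced to its degree-zero part $D_0\in Der(\tsv)_0$.

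Next I would invoke Lemma~\ref{L3.6}, the computational core, which shows that any $D\in Der(\tsv)_0$ has the explicit form $D=ad\bigl(aL_0-\frac{c}{2}M_0+(b-\frac{a}{2})N_0\bigr)$ for suitable scalars $a,b,c\in\C$, and hence $Der(\tsv)_0\subseteq Inn(\tsv)$. Combining this with Lemma~\ref{L3.5} gives
\[
Der(\tsv)=Der(\tsv)_0+Inn(\tsv)\subseteq Inn(\tsv)+Inn(\tsv)=Inn(\tsv),
\]
and since trivially $Inn(\tsv)\subseteq Der(\tsv)$, we conclude $Der(\tsv)=Inn(\tsv)$. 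The interpretation as $H^1(\tsv,\tsv)=0$ follows because $H^1(\tsv,\tsv)=Der(\tsv)/Inn(\tsv)$ by the definitions recalled before Lemma~\ref{L3.2}.

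In this formulation the proof of the theorem itself is essentially a two-line bookkeeping argument, so the genuine obstacle lies upstream, inside Lemma~\ref{L3.6}. The hard part is the long chain of bracket identities, where one systematically applies $D$ to every defining relation $[E_{\frac{m}{2}},E_{\frac{n}{2}}]$ and extracts recursions such as $a^{m+n}_{11}=a^m_{11}+a^n_{11}$ and $a^{m+n}_{22}=ma^1_{11}+a^n_{22}$; solving these (via induction on $m$, together with well-chosen substitutions like $m=4,n=-2$ to remove the $m\neq n$ restriction) is what actually forces the structure constants to collapse into the three free parameters $a,b,c$. Once that computation is trusted, the assembly into Theorem~\ref{T3.1} is immediate.
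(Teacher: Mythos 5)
Your proposal is correct and follows exactly the paper's route: the theorem is obtained by combining Lemma~\ref{L3.5} ($Der(\tsv)=Der(\tsv)_{0}+Inn(\tsv)$) with Lemma~\ref{L3.6} ($Der(\tsv)_{0}\subseteq Inn(\tsv)$), which is precisely what the paper does when it states that the theorem follows ``from the above lemmas.'' Your identification of Lemma~\ref{L3.6} as the computational core and of the final assembly as immediate bookkeeping matches the paper's structure.
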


\begin{lem}\label{T1.2}
 $C({\tsv})=\{0\}$, where  $C({\tsv})$ is the center of
${\tsv}$.
\end{lem}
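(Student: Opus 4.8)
The plan is to exploit the $\frac{1}{2}\Z$-grading on $\tsv$. Because $\tsv$ is graded, its center $C(\tsv)$ is automatically a graded subspace: writing a central element as a finite sum of homogeneous components $z=\sum_{k} z_{\frac{k}{2}}$ and bracketing with an arbitrary homogeneous element shows, by comparing degrees, that each $z_{\frac{k}{2}}$ is itself central. It therefore suffices to rule out nonzero \emph{homogeneous} central elements, of which there are two kinds: an integer-degree element $z=aL_{n}+bM_{n}+cN_{n}$ with $n\in\Z$, and a half-integer-degree element $z=dY_{n+\frac{1}{2}}$.

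The half-integer case is immediate. If $z=dY_{n+\frac{1}{2}}$ were central, then for every $m\in\Z$ we would need $[L_{m},z]=d(n+\frac{1-m}{2})Y_{m+n+\frac{1}{2}}=0$. Since $n+\frac{1-m}{2}\neq 0$ for any $m\neq 2n+1$, this forces $d=0$.

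For the integer case I would first bracket $z=aL_{n}+bM_{n}+cN_{n}$ with $L_{m}$. Using $[L_{m},L_{n}]=(n-m)L_{m+n}$, $[L_{m},M_{n}]=nM_{m+n}$, and $[L_{m},N_{n}]=nN_{m+n}$, one obtains $[L_{m},z]=a(n-m)L_{m+n}+bn\,M_{m+n}+cn\,N_{m+n}$. If $n\neq 0$, taking any $m\neq n$ forces each of the coefficients $a(n-m)$, $bn$, $cn$ to vanish, and since $n-m\neq 0$ and $n\neq 0$ this gives $a=b=c=0$, hence $z=0$. If $n=0$, the relation reduces to $[L_{m},z]=-am\,L_{m}=0$, which only yields $a=0$ and leaves $z=bM_{0}+cN_{0}$; this is the one place where the Virasoro bracket alone is insufficient, since $M_{0}$ and $N_{0}$ are annihilated by the adjoint action of $L_{0}$.

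To finish the $n=0$ subcase I would bring in the remaining brackets. Since $[M_{0},Y_{\frac{1}{2}}]=0$ and $[N_{0},Y_{\frac{1}{2}}]=Y_{\frac{1}{2}}$, centrality gives $[z,Y_{\frac{1}{2}}]=cY_{\frac{1}{2}}=0$, so $c=0$; and since $[M_{0},N_{m}]=-2M_{m}$, we get $[z,N_{m}]=-2b\,M_{m}=0$, so $b=0$. Thus $z=0$ in every degree, whence $C(\tsv)=\{0\}$. Every step is a routine bracket computation; the only non-automatic point is the integer degree $n=0$, where the central candidates $M_{0}$ and $N_{0}$ lie in the kernel of the adjoint action of $L_{0}$ and must instead be detected by pairing against $Y_{\frac{1}{2}}$ and $N_{m}$.
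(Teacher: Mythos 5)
Your proof is correct and follows essentially the same route as the paper: the paper uses $[L_{0},x]=0$ to place a central element directly in ${\tsv}_{0}$ (which is just a more compact form of your reduction to homogeneous components), and then kills $aL_{0}+bN_{0}+cM_{0}$ by bracketing with $L_{1}$, $Y_{\frac{1}{2}}$ and $N_{0}$, exactly as you do in your degree-zero subcase.
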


\begin{proof}
  For any $E_{\frac{n}{2}}\in {\tsv}_{\frac{n}{2}}$,  we
have
$$[L_{0}, E_{\frac{n}{2}}]=\frac{n}{2}E_{\frac{n}{2}}.$$  It forces  $x\in
{\tsv}_{0}$,  for any $x\in C({\tsv})$, since $[L_{0}, x]=0$. Let
$x=aL_{0}+bN_{0}+cM_{0}$, where $a, b, c\in\C$. Then
$$[x, L_{1}]=[aL_{0}+bN_{0}+cM_{0}, L_{1}]=[aL_{0}, L_{1}]=aL_{1}=0.$$
So $a=0$. By the following relations,
$$[x,  Y_{\frac{1}{2}}]=[bN_{0}+cM_{0},Y_{\frac{1}{2}}]=[bN_{0},Y_{\frac{1}{2}}]=bY_{\frac{1}{2}}=0,$$
$$[x, N_{0}]=[cM_{0}, N_{0}]=-2cM_{0}=0,$$
we have $b=0$ and  $c=0$. Therefore, $x=0$.
\end{proof}

By Lemma \ref{T1.2} and Theorem \ref{T3.1}, we have
\begin{corollary}
${\tsv}$ is an infinite-dimensional complete Lie algebra.
\end{corollary}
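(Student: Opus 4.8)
The plan is to read the corollary as the conjunction of the two conditions that, by N. Jacobson's definition recalled in the introduction, characterise a complete Lie algebra: triviality of the center together with the property that every derivation is inner. Thus no new computation is required; the entire task is to match the conclusions of the preceding results to this definition and to record the trivial dimension count.

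First I would invoke Lemma \ref{T1.2}, which asserts $C(\tsv)=\{0\}$, supplying the first defining condition. The content of that lemma is that any central element must lie in $\tsv_{0}$ because $\mathrm{ad}\,L_{0}$ acts on $\tsv_{\frac{n}{2}}$ by the scalar $\frac{n}{2}$, after which bracketing successively against $L_{1}$, $Y_{\frac{1}{2}}$ and $N_{0}$ forces all of its coefficients to vanish.

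Second I would invoke Theorem \ref{T3.1}, which gives $Der(\tsv)=Inn(\tsv)$, equivalently $\mathrm{H}^{1}(\tsv,\tsv)=0$, supplying the second defining condition. This rests on the grading of $Der(\tsv)$ from Lemma \ref{L3.2}, on the vanishing of the relevant first cohomology and $\tsv_{0}$-homomorphism spaces in Lemmas \ref{L3.3}--\ref{L3.4} (so that every derivation of nonzero degree is inner, as summarised in Lemma \ref{L3.5}), and on the explicit description in Lemma \ref{L3.6} of each degree-zero derivation as the adjoint action of an element of $\tsv_{0}$.

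Finally, infinite-dimensionality is immediate from the defining basis $\{L_{n},M_{n},N_{n},Y_{n+\frac{1}{2}}\mid n\in\Z\}$, which is an infinite set. Combining the three observations, $\tsv$ has zero center, only inner derivations, and infinite dimension, hence it is an infinite-dimensional complete Lie algebra. I expect no genuine obstacle at this stage: all of the substantive work has already been carried out in Lemmas \ref{T1.2} and \ref{L3.2}--\ref{L3.6} and in Theorem \ref{T3.1}, and the only point worth flagging is simply to confirm that the definition of completeness demands exactly these two conditions and nothing more.
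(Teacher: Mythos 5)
Your proposal is correct and matches the paper exactly: the corollary is deduced by combining Lemma \ref{T1.2} (trivial center) with Theorem \ref{T3.1} (all derivations inner), together with the evident infinite-dimensionality from the basis. No further comment is needed.
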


%
\section{\bf The Universal Central Extension  of ${\tsv}$}
%
\label{sub3-c-related}

In this section, we discuss  the structure of  the universal
central extension of ${\tsv}$. Let us first recall some basic
concepts. Let $\mathfrak{g}$ be a Lie algebra. A bilinear function
$\psi: \mathfrak{g}\times \mathfrak{g}\longrightarrow \C$ is
called a 2-cocycle on $\mathfrak{g}$ if for all $x, y, z\in
\mathfrak{g}$, the following two conditions are satisfied:
\begin{equation}\nonumber
\psi(x, y)=-\psi(y, x),
\end{equation}
\begin{equation}\label{e3.0.1}
\psi([x, y], z)+\psi([y, z], x)+\psi([z, x], y)=0.
\end{equation}
 For any linear function $f:
\mathfrak{g}\longrightarrow \C$, one can define  a 2-cocycle
$\psi_{f}$ as follows
$$\psi_{f}(x, y)=f([x, y]), \quad\quad\quad \forall\; x, y\in \mathfrak{g}.$$
Such a 2-cocycle is called a 2-coboudary on $\mathfrak{g}$. Denote
by $C^{2}(\mathfrak{g}, \C)$ the vector space of 2-cocycles on
$\mathfrak{g}$, $B^{2}(\mathfrak{g}, \C)$ the vector space of
2-coboundaries on $\mathfrak{g}$. Then the quotient space
$H^{2}(\mathfrak{g},\C)=C^{2}(\mathfrak{g},\C)/B^{2}(\mathfrak{g},\C)$
is called the second cohomology group of $\mathfrak{g}$.

\begin{theorem}\label{T3.1}
$\dim H^{2}({\tsv}, \C)=3.$
\end{theorem}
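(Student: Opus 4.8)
My plan is to exploit the $\tfrac12\Z$-grading of $\tsv$ to reduce the computation of $H^{2}(\tsv,\C)$ to a finite combinatorial problem on homogeneous cocycles. Viewing $\C$ as the trivial module concentrated in degree $0$, any $2$-cocycle $\psi$ splits as $\psi=\sum_{s}\psi_{s}$ with $\psi_{s}(\tsv_{a},\tsv_{b})=0$ unless $a+b=s$, and each $\psi_{s}$ is again a cocycle. Applying the cocycle identity \refequa{e3.0.1} to a triple $(L_{0},x,y)$ with $x\in\tsv_{a}$, $y\in\tsv_{b}$ and using $[L_{0},x]=ax$ gives $(a+b)\psi_{s}(x,y)=-\psi_{s}([x,y],L_{0})$, so for $s\neq 0$ the functional $z\mapsto -\tfrac1s\psi_{s}(z,L_{0})$ (defined on $\tsv_{s}$ and $0$ elsewhere) has $\psi_{s}$ as its coboundary. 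Hence every class is represented by a cocycle of degree $0$, i.e. one supported on pairs of opposite degree, and it remains to analyze such a $\psi$.

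Next I would run through the finitely many sectors of opposite-degree pairs, regarding $\psi(L_{m},L_{-m})$, $\psi(L_{m},N_{-m})$, $\psi(N_{m},N_{-m})$, $\psi(L_{m},M_{-m})$, $\psi(M_{m},M_{-m})$, $\psi(M_{m},N_{-m})$ and $\psi(Y_{m+\frac12},Y_{-m-\frac12})$ as functions of $m$ (pairs of mixed integer/half-integer degree vanish automatically). Imposing the cocycle identity on the three Witt-type triples $(L_{m},L_{n},L_{p})$, $(L_{m},L_{n},N_{p})$ and $(N_{m},N_{n},L_{p})$ with $m+n+p=0$ yields the familiar functional equations whose solution spaces, modulo the coboundaries $f\mapsto f([\cdot,\cdot])$, are spanned respectively by the Virasoro cocycle $\psi(L_{m},L_{-m})\sim m^{3}-m$, the weight-one cocycle $\psi(L_{m},N_{-m})\sim m^{2}$, and the Heisenberg cocycle $\psi(N_{m},N_{-m})\sim m$. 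Here the purely linear parts are coboundaries because $[L_{m},L_{-m}]\in\C L_{0}$ and $[L_{m},N_{-m}]\in\C N_{0}$, while the $\psi(N,N)$ sector admits no coboundary at all since $[N_{m},N_{-m}]=0$.

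The crux of the argument—and the step I expect to be the real obstacle—is to show that the four remaining sectors contribute nothing, since each naively supports a nonzero candidate. Here the extra generator $N$ is decisive: feeding the brackets $[N_{0},M_{n}]=2M_{n}$ and $[N_{0},Y_{n+\frac12}]=Y_{n+\frac12}$ into the cocycle identity on the triples $(N_{0},M_{m},M_{-m})$, $(N_{0},L_{m},M_{-m})$, $(N_{0},M_{m},N_{-m})$ and $(N_{0},Y_{m+\frac12},Y_{-m-\frac12})$ forces, in turn, $\psi(M_{m},M_{-m})=0$, $\psi(L_{m},M_{-m})\sim m$, $\psi(M_{m},N_{-m})=\mathrm{const}$ and $\psi(Y_{m+\frac12},Y_{-m-\frac12})\sim 2m+1$. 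Each of these is a coboundary arising from a single functional supported on $M_{0}$, because $[L_{m},M_{-m}]$, $[M_{m},N_{-m}]$ and $[Y_{m+\frac12},Y_{-m-\frac12}]$ all lie in $\C M_{0}$; subtracting that one coboundary annihilates all four sectors simultaneously (and, reassuringly, leaves the three surviving sectors untouched). I would emphasize that the $Y$-sector is the delicate case: the triple $(L_{k},Y_{m+\frac12},Y_{n+\frac12})$ by itself only gives a recursion admitting a genuine quadratic solution, and it is precisely the relation $[N_{0},Y_{n+\frac12}]=Y_{n+\frac12}$, via the triple $(N_{0},Y_{m+\frac12},Y_{-m-\frac12})$, that collapses this to the coboundary $2m+1$.

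This establishes $\dim H^{2}(\tsv,\C)\le 3$. For the reverse inequality I would exhibit the three cocycles explicitly, namely $\psi_{1}(L_{m},L_{n})=\delta_{m+n,0}(m^{3}-m)$, $\psi_{2}(L_{m},N_{n})=\delta_{m+n,0}\,m^{2}$ and $\psi_{3}(N_{m},N_{n})=\delta_{m+n,0}\,m$, each extended by $0$ on all other pairs, and verify directly that every mixed triple contributes $0$ (the only nonvanishing contributions being the Witt-type triples already handled), so that each is a genuine $2$-cocycle. Since their supports are disjoint and none is a coboundary, any nontrivial combination is non-exact, so they are linearly independent in $H^{2}(\tsv,\C)$, giving $\dim H^{2}(\tsv,\C)\ge 3$ and hence equality.
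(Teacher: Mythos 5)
Your argument is correct and arrives at the same three cocycle classes, but it is organized differently from the paper's proof, and in one place more efficiently. The paper does not perform your grading reduction: it subtracts a single explicit coboundary $\vp_f$ at the outset (with $f$ defined on every basis element through $\vp(L_0,\cdot)$ and $\vp(L_1,\cdot)$) and then checks $\overline{\vp}(x,y)=0$ for \emph{all} pairs, including those of non-opposite degree, by repeated use of the identity with $L_0$; your observation that every homogeneous component of nonzero degree is the coboundary of $z\mapsto -\frac{1}{s}\psi(z,L_0)$ packages all of those computations at once. In the delicate sectors the two proofs use different triples: the paper kills $\vp(M,M)$ via $(M_p,Y_{m+\frac{1}{2}},Y_{n+\frac{1}{2}})$, reaches $\vp(L_m,M_{-m})=m\vp(L_1,M_{-1})$ through the chain $(N_0,Y,Y)\rightarrow(L_m,Y_{p+\frac{1}{2}},Y_{q+\frac{1}{2}})$, and treats $\vp(M,N)$ via $(N_p,Y,Y)$, whereas you hit each sector directly with a triple containing $N_0$, exploiting $[N_0,M_n]=2M_n$ and $[N_0,Y_{n+\frac{1}{2}}]=Y_{n+\frac{1}{2}}$, and note that the single functional $f(M_0)=-\frac{1}{2}\psi(M_0,N_0)$ trivializes $MM$, $LM$, $MN$ and $YY$ simultaneously; that is a genuine shortcut avoiding the paper's detour through the $Y$-sector recursion \refequa{3.0.25}. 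Both proofs agree that the decisive input is the $N_0$-action, and both end with the same three independent classes (your representative $m^2$ in the $L$-$N$ sector differs from the paper's $m^2-m$ only by the coboundary supported on $N_0$). The one step you should not leave implicit is the closing verification that $\psi_2$ and $\psi_3$ really are cocycles --- for $\psi_2$ the triples $(L,L,N)$ and $(L,N,N)$ must be checked --- but this is routine and you do flag it.
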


\begin{proof}
Let $\vp: {\tsv}\times
{\tsv}\longrightarrow \C$ be a 2-cocycle on
${\tsv}$. Let
$f:{\tsv}\longrightarrow \C$ be a linear
function defined by
\begin{eqnarray*}
& &f(L_{0})=-\dis\frac{1}{2}\varphi(L_{1},L_{-1}), \quad
f(L_{m})=\frac{1}{m}\vp(L_{0}, L_{m}),\; m\neq 0,
\\
& &f(M_{0})=-\varphi(L_{1},M_{-1}),\quad
f(M_{m})=\frac{1}{m}\vp(L_{0}, M_{m}),\; m\neq 0,
\\
& &f(N_{0})=-\varphi(L_{1},N_{-1}), \quad
f(N_{m})=\frac{1}{m}\vp(L_{0}, N_{m}),\; m\neq 0,
\\
& &f(Y_{m+\frac{1}{2}})=\frac{1}{m+\frac{1}{2}}\vp(L_{0},
Y_{m+\frac{1}{2}}), \quad\quad\forall\; m\in\Z.
\end{eqnarray*}
Let $\overline{\vp}=\vp-\vp_{f}$, where $\vp_{f}$ is the
2-coboundary induced by $f$, then
$$\overline{\vp}(x, y)=\vp(x, y)-f([x, y]), \quad \forall\; x, y\in {\tsv}.$$
%
By the known result on the central extension of the classical Witt
algebra ( see \cite{BM} or \cite{LJ} ), we have
$$\overline{\vp}(L_{m}, L_{n})=\a\delta_{m+n,0}(m^{3}-m), \quad \forall\; m,n\in \Z, \; \a\in\C.$$
 By the fact that
$\vp([L_{0}, L_{m}], M_{n})+\vp([L_{m}, M_{n}], L_{0})+\vp([M_{n},
L_{0}], L_{m} )=0$, we get
$$(m+n) \vp(L_{m},M_{n})=n\vp( L_{0}, M_{m+n}).$$
So
\begin{equation}\label{3.0.5}
\vp(L_{m},M_{n})=\frac{n}{m+n}\vp( L_{0}, M_{m+n}), \quad
m+n\neq0.
\end{equation}
Then it is easy to deduce that
$$\overline{\vp}(L_{m},M_{n})=0, \quad\quad\quad\;m+n\neq0.$$
Furthermore,
\begin{equation}\label{3.0.6}
\overline{\vp}(L_{m},M_{-m})=\vp(L_{m},M_{-m})+m
f(M_{0})=\vp(L_{m},M_{-m})-m \vp(L_{1},M_{-1}).
\end{equation}
Similarly, denote $\overline{\vp}(L_{m},N_{-m})=c(m)$ for all
$m\in\Z$, then we have
\begin{equation}\label{3.0.8}
\overline{\vp}(L_{m},N_{n})=\delta_{m+n,0}c(m),\quad\overline{\vp}(L_{m},N_{-m})=\vp(L_{m},N_{-m})-m\vp(L_{1},N_{-1}).
\end{equation}
By (\ref{e3.0.1}), $\vp([L_{0}, L_{m}],
Y_{n+\frac{1}{2}})+\vp([L_{m}, Y_{n+\frac{1}{2}}],
L_{0})+\vp([Y_{n+\frac{1}{2}}, L_{0}], L_{m} )=0$, so
$$(m+n+\frac{1}{2})\vp(L_{m},Y_{n+\frac{1}{2}})=(n+\frac{1-m}{2})\vp(L_{0},Y_{m+n+\frac{1}{2}}).$$
Then for all $m, n\in\Z$, we get
$$\vp(L_{m},Y_{n+\frac{1}{2}})=\frac{n+\frac{1-m}{2}}{m+n+\frac{1}{2}}\vp(L_{0},Y_{m+n+\frac{1}{2}}).$$
Consequently, we have
$$\overline{\vp}(L_{m},Y_{n+\frac{1}{2}})=0, \quad\quad\quad\;m,n\in\Z.$$
From the relation that $$\vp([N_{0}, Y_{m+\frac{1}{2}}],
Y_{-m-\frac{1}{2}})+\vp([Y_{m+\frac{1}{2}}, Y_{-m-\frac{1}{2}}],
N_{0}) +\vp([Y_{-m-\frac{1}{2}}, N_{0}], Y_{m+\frac{1}{2}})=0,\;\;\;
m\in\Z,$$ we have
\begin{equation}\label{ext1}
\vp(Y_{m+\frac{1}{2}},Y_{-m-\frac{1}{2}})=(m+\frac{1}{2})\vp(N_{0},
M_{0}),\;\;\; m\in\Z.
\end{equation}
By (\ref{e3.0.1}), $\vp([L_{m}, Y_{p+\frac{1}{2}}],
Y_{q+\frac{1}{2}})+\vp([Y_{p+\frac{1}{2}}, Y_{q+\frac{1}{2}}],
L_{m})+\vp([Y_{q+\frac{1}{2}}, L_{m}], Y_{p+\frac{1}{2}} )=0$, then
$$(p+\frac{1-m}{2})\vp( Y_{m+p+\frac{1}{2}},Y_{q+\frac{1}{2}})+(p-q)\vp(M_{p+q+1},L_{m})
-(q+\frac{1-m}{2})\vp(Y_{m+q+\frac{1}{2}}, Y_{p+\frac{1}{2}} )=0,$$
for all $m, p, q,\in\Z$. Let $m=-p-q-1$, then for all $p, q\in\Z$,
we have
\begin{equation}\label{3.0.25}
(p+1+\frac{p+q}{2})\vp(Y_{-q-\frac{1}{2}},Y_{q+\frac{1}{2}})+(p-q)\vp(M_{p+q+1},L_{-p-q-1})
-(q+1+\frac{p+q}{2})\vp(Y_{-p-\frac{1}{2}}, Y_{p+\frac{1}{2}} )=0.
\end{equation}
Using (\ref{ext1}), we get
\begin{equation}\label{ext2}
\vp(L_{-p-q-1}, M_{p+q+1})=\frac{p+q+1}{2}\vp(N_{0}, M_{0}),
\quad\quad\quad\; p\neq q.
\end{equation}
Letting $p=-2, q=0$ in (\ref{ext2}),  we have
\begin{equation}\label{3.0.27}
\vp(L_{1}, M_{-1})=-\frac{1}{2}\vp(N_{0},
M_{0})=-\vp(Y_{\frac{1}{2}},Y_{-\frac{1}{2}}).
\end{equation}
It follows from (\ref{ext1}) and  (\ref{ext2}) that
\begin{equation}\label{ext3}
\vp(Y_{m+\frac{1}{2}},Y_{-m-\frac{1}{2}})=-(2m+1)\vp(L_{1}, M_{-1}),
\;\;\; m\in\Z,
\end{equation}
\begin{equation}\label{3.0.26}
\vp(L_{m},
M_{-m})=-m\vp(Y_{\frac{1}{2}},Y_{-\frac{1}{2}})=m\vp(L_{1}, M_{-1}),
\quad\;\;\; m\in\Z.
\end{equation}
By (\ref{3.0.6}) and (\ref{3.0.26}), we have
$\overline{\vp}(L_{m},M_{-m})=0$ for all $m\in\Z$. Therefore,
$$\overline{\vp}(L_{m},M_{n})=0, \quad\; \forall\; m, n\in\Z.$$
 According to (\ref{e3.0.1}),  $\vp([L_{m}, L_{n}],
N_{-m-n})+\vp([L_{n}, N_{-m-n}], L_{m})+\vp([N_{-m-n}, L_{m}],
L_{n} )=0$, then we have
\begin{equation}\nonumber
(n-m)\vp(L_{m+n},N_{-m-n})-(m+n)\vp(N_{-m},L_{m})+(m+n)\vp(N_{-n},L_{n})=0.
\end{equation}
By (\ref{3.0.8}), we get
\begin{equation}\label{3.0.11}
(n-m)c(m+n)+(m+n)c(m)=(m+n)c(n).
\end{equation}
Let $n=1$ in (\ref{3.0.11}) and note $c(0)=c(1)=0$, then we obtain
\begin{equation}\label{3.0.12}
(m-1)c(m+1)=(m+1)c(m).
\end{equation}
Let $n=1-m$ in (\ref{3.0.11}), then
\begin{equation}\nonumber
c(m)=c(1-m), \quad\quad \forall\; m\in\Z.
\end{equation}
By induction on $m$, we  deduce that $c(-1)=c(2)$ determines all
$c(m)$ for $m\in\Z$. On the other hand, $c(m)=m^{2}-m$ is a
solution of equation (\ref{3.0.12}). So
$$c(m)=\b(m^{2}-m), \quad  \b\in\C,$$ is the general
solution of equation (\ref{3.0.12}). Therefore,
\begin{equation}\label{3.0.14}
\overline{\vp}(L_{m}, N_{n})=\delta_{m+n,0}\b(m^{2}-m), \quad\;
\forall\; m, n\in\Z, \; \b\in\C.
\end{equation}
 By (\ref{e3.0.1}), we have
$$\vp([M_{p},Y_{m+\frac{1}{2}}],Y_{n+\frac{1}{2}})+\vp([Y_{m+\frac{1}{2}},
Y_{n+\frac{1}{2}}], M_{p})+\vp([Y_{n+\frac{1}{2}},
M_{p}],Y_{m+\frac{1}{2}} )=0.$$ Then $(m-n)\vp(M_{m+n+1}, M_{p})=0.$
Let $m+n+1=q$, then we have $(2m+1-q)\vp(M_{q}, M_{p})=0$. This
means that
$$\vp(M_{p}, M_{q})=0, \quad\forall\; p, q\in\Z.$$ Therefore, we
have
\begin{equation}\label{3.1.14}
\overline{\vp}(M_{m}, M_{n})=\vp(M_{m},M_{n})=0, \quad\forall\;
m,n\in\Z.
\end{equation}
By (\ref{e3.0.1}), $\vp([L_{0}, M_{m}], N_{n})+\vp([M_{m}, N_{n}],
L_{0})+\vp([N_{n}, L_{0}], M_{m} )=0$, we have
\begin{equation}\label{3.1.16}
(m+n)\vp( M_{m}, N_{n})=-2\vp(L_{0},M_{m+n}).
\end{equation}
Then for $m+n\neq 0$, we have
$$\overline{\vp}(M_{m}, N_{n})=\vp(M_{m}, N_{n})+\frac{2}{m+n}\vp(L_{0}, M_{m+n})=0.$$
On the other hand,
$$\overline{\vp}(M_{m},N_{-m})=\vp(M_{m},N_{-m})+2f(M_{0})=\vp(M_{m},N_{-m})-2\vp(L_{1},M_{-1}).$$
By (\ref{e3.0.1}),
$\vp([N_{p},Y_{m+\frac{1}{2}}],Y_{n+\frac{1}{2}})+\vp([Y_{m+\frac{1}{2}},
Y_{n+\frac{1}{2}}],
N_{p})+\vp([Y_{n+\frac{1}{2}},N_{p}],Y_{m+\frac{1}{2}} )=0$, we
have
$$\vp(Y_{p+m+\frac{1}{2}},Y_{n+\frac{1}{2}})+(m-n)\vp(M_{m+n+1}, N_{p})-\vp(Y_{p+n+\frac{1}{2}},Y_{m+\frac{1}{2}} )=0.$$
Let $n=-m-p-1$, then we have
$$\vp(Y_{p+m+\frac{1}{2}},Y_{-p-m-\frac{1}{2}})+(2m+p+1)\vp(M_{-p}, N_{p})-\vp(Y_{-m-\frac{1}{2}},Y_{m+\frac{1}{2}} )=0.$$
By (\ref{ext3}), we get
$$\vp(M_{-p}, N_{p})=2\vp(L_{1},M_{-1}), \quad \forall\; p\in\Z.$$
Therefore,
\begin{equation}\label{3.1.17}
\overline{\vp}(M_{m}, N_{n})=0, \quad\forall\; m,n\in\Z.
\end{equation}
By (\ref{e3.0.1}), $\vp([L_{0}, M_{m}],
Y_{n+\frac{1}{2}})+\vp([M_{m}, Y_{n+\frac{1}{2}}],
L_{0})+\vp([Y_{n+\frac{1}{2}}, L_{0}], M_{m} )=0$, we have
$$(m+n+\frac{1}{2})\vp( M_{m},Y_{n+\frac{1}{2}})=0.$$
So $\vp( M_{m},Y_{n+\frac{1}{2}})=0$ for all $ m, n\in\Z.$
Therefore
$$\overline{\vp}(M_{m},Y_{n+\frac{1}{2}})=\vp(M_{m},Y_{n+\frac{1}{2}})-f([M_{m},Y_{n+\frac{1}{2}}])=0.$$
By (\ref{e3.0.1}), $\vp([L_{0}, N_{p}],N_{q})+\vp([N_{p}, N_{q}],
L_{0})+\vp([N_{q}, L_{0}], N_{p})=0$, we have
$$(p+q)\vp(N_{p},N_{q})=0.$$
So
$\overline{\vp}(N_{p},N_{q})=\vp(N_{p},N_{q})=\delta_{p+q,0}k(p)$,
where $\overline{\vp}(N_{p},N_{-p})=k(p)$.
Then by (\ref{e3.0.1}), $\vp([L_{-p-q}, N_{p}],N_{q})+\vp([N_{p},
N_{q}], L_{-p-q})+\vp([N_{q}, L_{-p-q}], N_{p})=0$, we get
 $$pk(q)=qk(p).$$ Let $q=1$, then $k(p)=pk(1).$ Set
$k(1)=k$, then we have
\begin{equation}\nonumber
\overline{\vp}(N_{m}, N_{n})=k\delta_{m+n,0}m, \quad\forall\;
m,n\in\Z.
\end{equation}
By (\ref{e3.0.1}), $\vp([L_{0},
N_{m}],Y_{n+\frac{1}{2}})+\vp([N_{m}, Y_{n+\frac{1}{2}}],
L_{0})+\vp([Y_{n+\frac{1}{2}}, L_{0}], N_{m} )=0$, we have
\begin{equation}\nonumber
(m+n+\frac{1}{2})\vp(N_{m},Y_{n+\frac{1}{2}})=\vp(L_{0},Y_{m+n+\frac{1}{2}}).
\end{equation}
Then
\begin{eqnarray*}
\overline{\vp}(N_{m},Y_{n+\frac{1}{2}})&=&\vp(N_{m},Y_{n+\frac{1}{2}})-f(Y_{m+n+\frac{1}{2}})
\\
&=&\vp(N_{m},Y_{n+\frac{1}{2}})-\frac{1}{m+n+\frac{1}{2}}\vp(L_{0},Y_{m+n+\frac{1}{2}})
\\
&=&0,\quad \forall\; m,n\in\Z.
\end{eqnarray*}
By (\ref{e3.0.1}),
$\vp([L_{0},Y_{m+\frac{1}{2}}],Y_{n+\frac{1}{2}})+\vp([Y_{m+\frac{1}{2}},
Y_{n+\frac{1}{2}}], L_{0})+\vp([Y_{n+\frac{1}{2}},
L_{0}],Y_{m+\frac{1}{2}} )=0$, we have
\begin{equation}\label{3.1.11}
(m+n+1)\vp(Y_{m+\frac{1}{2}},Y_{n+\frac{1}{2}})
=(m-n)\vp(L_{0},M_{m+n+1}).
\end{equation}
For $m+n+1\neq 0$,
\begin{eqnarray*}
\overline{\vp}(Y_{m+\frac{1}{2}},Y_{n+\frac{1}{2}})
&=&\vp(Y_{m+\frac{1}{2}},Y_{n+\frac{1}{2}})-f([Y_{m+\frac{1}{2}},Y_{n+\frac{1}{2}}])
\\
&=&\vp(Y_{m+\frac{1}{2}},Y_{n+\frac{1}{2}})-(m-n)f(M_{m+n+1})
\\
&=&\vp(Y_{m+\frac{1}{2}},Y_{n+\frac{1}{2}})-\frac{m-n}{m+n+1}\vp(L_{0},M_{m+n+1}).
\end{eqnarray*}
By (\ref{3.1.11}), we have
$$\overline{\vp}(Y_{m+\frac{1}{2}},Y_{n+\frac{1}{2}})=0, \quad m+n+1\neq 0.$$
On the other hand, for all $m\in\Z$,
\begin{eqnarray*}
\overline{\vp}(Y_{m+\frac{1}{2}},Y_{-m-\frac{1}{2}})
&=&\vp(Y_{m+\frac{1}{2}},Y_{-m-\frac{1}{2}})-f([Y_{m+\frac{1}{2}},Y_{-m-\frac{1}{2}}])
\\
&=&\vp(Y_{m+\frac{1}{2}},Y_{-m-\frac{1}{2}})-(2m+1)f(M_{0})
\\
&=&\vp(Y_{m+\frac{1}{2}},Y_{-m-\frac{1}{2}})+(2m+1)\vp(L_{1},M_{-1}).
\end{eqnarray*}
By (\ref{ext3}), we have
$$\overline{\vp}(Y_{m+\frac{1}{2}},Y_{-m-\frac{1}{2}})=0, \quad\quad\forall\; m\in\Z.$$
So
$$\overline{\vp}(Y_{m+\frac{1}{2}},Y_{n+\frac{1}{2}})=0, \quad
\forall\; m, n\in\Z.$$ Therefore, $\overline{\vp}$ is determined
by the following three  2-cocycles
\begin{eqnarray*}
& &\vp_{1}(L_{m}, L_{n})=\delta_{m+n,0}\dis\frac{m^{3}-m}{12},
\\
& &\vp_{2}(L_{m}, N_{n})=\delta_{m+n,0}(m^{2}-m),
\\
& &\vp_{3}(N_{m}, N_{n})=\delta_{m+n,0}n.
\end{eqnarray*}

\end{proof}
\begin{remark}
 It is referred in \cite{U} that $\tsv$ has three independent
classes of central extensions given by the cocycles $$
c_{1}(L_{m}, L_{n})=\delta_{m+n,0}\dis\frac{m^{3}-m}{12},\;
 c_{2}(L_{m}, N_{n})=\delta_{m+n,0}m^{2},\; c_{3}(N_{m}, N_{n})=\delta_{m+n,0}m.$$
Here we prove in detail that $\tsv$ has only three independent
classes of central extensions.
\end{remark}

Let $\mathfrak{g}$ be a perfect Lie algebra, i.e., $[\mathfrak{g},
\mathfrak{g}]=\mathfrak{g}$. $(\widehat{\mathfrak{g}}, \pi)$ is
called a central extension of $\mathfrak{g}$ if $\pi:
\widehat{\mathfrak{g}}\longrightarrow \mathfrak{g}$ is a surjective
homomorphism whose kernel lies in the center of the Lie algebra
$\widehat{\mathfrak{g}}$. The pair $(\widehat{\mathfrak{g}}, \pi)$
is called a covering of $\mathfrak{g}$ if  $\widehat{\mathfrak{g}}$
is perfect. A covering $(\widehat{\mathfrak{g}}, \pi)$ is a
universal central extension of $\mathfrak{g}$ if for every central
extension $(\widehat{\mathfrak{g}}', \varphi)$ of $\mathfrak{g}$
there is a unique homomorphism $\psi:
\widehat{\mathfrak{g}}\longrightarrow\widehat{\mathfrak{g}}'$ for
which $\varphi\psi=\pi$. In \cite{G}, it is proved that every
perfect Lie algebra has a universal central extension.

Let
${\hsv}={{\tsv}}\bigoplus\C\;C_{L}\bigoplus\C\;C_{LN}\bigoplus\C\;C_{N}$
be a vector space over the complex field $\C$ with a basis
$\{L_{n}, M_{n}, N_{n}, Y_{n+\frac{1}{2}}, C_{L}, C_{LN}, C_{N} \
|\ n\in\Z \}$ satisfying the following relations
$$[L_{m}, L_{n}]=(n-m)L_{m+n}+\delta_{m+n,0}\dis\frac{m^{3}-m}{12}C_{L},$$
$$[N_{m}, N_{n}]=n\delta_{m+n,0}C_{N},$$
$$[L_{m}, N_{n}]=nN_{m+n}+\delta_{m+n,0}(m^{2}-m)C_{LN},  $$
$$[M_{m}, M_{n}]=0, \ \ [Y_{m+\frac{1}{2}}, Y_{n+\frac{1}{2}}]=(m-n)M_{m+n+1},$$
$$[L_{m}, M_{n}]=nM_{m+n},  \ \ [L_{m},Y_{n+\frac{1}{2}}]=(n+\dis\frac{1-m}{2})Y_{m+n+\frac{1}{2}}, $$
$$[N_{m}, M_{n}]=2M_{m+n},  \ \ [N_{m},Y_{n+\frac{1}{2}}]=Y_{m+n+\frac{1}{2}}, \ \ [M_{m},Y_{n+\frac{1}{2}}]=0,$$
$$[{\hsv}, C_{L}]=[{\hsv}, C_{LN}]=[{\hsv}, C_{N}]=0,$$
for all $m,n\in\Z$.   Denote
$$H=\bigoplus\limits_{n\in\Z}\C\;N_{n}\bigoplus\C\;C_{N},\;
\mathfrak{Vir}=\bigoplus\limits_{n\in\Z}\C\;L_{n}\bigoplus\C\;C_{L},\;
H_{Vir}=H\bigoplus\;\mathfrak{Vir}\bigoplus\C\;C_{LN},$$
$$\mathcal{S}=\bigoplus\limits_{n\in\Z}\C\;M_{n}\bigoplus\limits_{n\in\Z}\C\;Y_{n+\frac{1}{2}},
 \quad \mathcal{H_{S}}=H\bigoplus \mathcal{S}.$$ They are all Lie subalgebras
of ${\hsv}$, where $H$ is an infinite-dimensional Heisenberg
algebra, $\mathfrak{Vir}$ is the classical Virasoro algebra,
$H_{Vir}$ is the twisted Heisenberg-Virasoro algebra,
$\mathcal{S}$ is a two-step nilpotent Lie algebra and
$\mathcal{H_{S}}$ is the semi-direct product of the Heisenberg
algebra $H$ and $\mathcal{S}$. Then ${\hsv}$ is the semi-direct
product of the twisted Heisenberg-Virasoro algebra $H_{Vir}$ and
$\mathcal{S}$, where $\mathcal{S}$ is an ideal of ${\hsv}$ .

\begin{corollary}\label{C3.2}
 ${\hsv}$ is the
universal covering algebra of the extended
Schr\"{o}dinger-Virasoro algebra ${{\tsv}}$.
\qed
\end{corollary}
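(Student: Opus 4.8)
The plan is to combine the existence theorem for universal central extensions of perfect Lie algebras (\cite{G}) with the computation $\dim H^2(\tsv,\C)=3$ established above, reducing the whole statement to the single verification that $\hsv$ is perfect.

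Since $\tsv$ is perfect (as noted after Definition \ref{D1.1}), by \cite{G} it possesses a universal central extension $(\mathfrak{u},\tau)$, and the kernel of $\tau$ is canonically isomorphic to the second homology $H_2(\tsv,\C)$. Because linear duality is exact on $\C$-vector spaces, the Chevalley--Eilenberg complexes give $H^2(\tsv,\C)\cong H_2(\tsv,\C)^{*}$; hence $H_2(\tsv,\C)$ is finite-dimensional with $\dim H_2(\tsv,\C)=\dim H^2(\tsv,\C)=3$, so $\dim\ker\tau=3$.

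Next I would record that $\hsv$ is precisely the central extension of $\tsv$ determined by the three cocycles $\vp_1,\vp_2,\vp_3$ found in the proof above, with central directions $C_L,C_{LN},C_N$, and that the projection $\pi:\hsv\to\tsv$ annihilating $C_L,C_{LN},C_N$ is a central extension with three-dimensional kernel. The crucial step is to show $\hsv$ is perfect. Since the classes $[\vp_1],[\vp_2],[\vp_3]$ are linearly independent in $H^2(\tsv,\C)$, the $\vp_i$ are independent as bilinear forms, so all three central generators occur as brackets: explicitly $[L_1,L_{-1}]$ together with $[L_2,L_{-2}]$ produces $C_L$, $[L_1,N_{-1}]$ together with $[L_2,N_{-2}]$ produces $C_{LN}$, and $[N_1,N_{-1}]$ produces $C_N$. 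As $\tsv$ is perfect we also have $[\hsv,\hsv]+\ker\pi=\hsv$, and therefore $[\hsv,\hsv]=\hsv$; thus $(\hsv,\pi)$ is a covering of $\tsv$.

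Finally, by the universal property there is a unique homomorphism $\psi:\mathfrak{u}\to\hsv$ with $\pi\psi=\tau$, and I would finish by checking $\psi$ is an isomorphism. Its image together with the central $\ker\pi$ spans $\hsv$, so perfectness of $\hsv$ forces $\psi$ surjective; then $\ker\psi\subseteq\ker\tau$ and $\psi(\ker\tau)=\ker\pi$, whence $\dim\ker\tau=\dim\ker\psi+\dim\ker\pi$, and since $\dim\ker\tau=\dim\ker\pi=3$ we obtain $\ker\psi=0$. Hence $\psi$ is an isomorphism and $\hsv\cong\mathfrak{u}$ is the universal covering algebra. The main obstacle is the perfectness of $\hsv$ (equivalently, that each central generator is genuinely a bracket): once this and the kernel-dimension count $\dim\ker\tau=3$ are in hand, the universal property finishes the argument mechanically.
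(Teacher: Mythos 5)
Your proposal is correct and follows the route the paper intends: Corollary \ref{C3.2} is stated with no proof precisely because it is meant to follow from Theorem \ref{T3.1} together with the explicit construction of $\hsv$ from the three independent cocycles $\vp_1,\vp_2,\vp_3$. You have simply supplied the routine verifications the paper omits (the duality $H^2(\tsv,\C)\cong H_2(\tsv,\C)^{*}$ forcing $\dim\ker\tau=3$, the perfectness of $\hsv$ via the brackets $[L_2,L_{-2}]$, $[L_2,N_{-2}]$, $[N_1,N_{-1}]$, and the dimension count showing the induced map from the universal covering is an isomorphism), and these are all sound.
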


 Set $deg(C_{L})=deg(C_{LN})=deg(C_{N})=0.$ Then there is  a
$\dis\frac{1}{2}\Z$-grading on ${\hsv}$ by
$${\hsv}=\bigoplus\limits_{n\in\Z}{\hsv}_{\frac{n}{2}}
=(\bigoplus\limits_{n\in\Z}{\hsv}_{n})
\bigoplus(\bigoplus\limits_{n\in\Z}{\hsv}_{n+\frac{1}{2}}),$$
where
$${\hsv}_{n}=span\{L_{n}, M_{n}, N_{n}\},\;\;
n\in\Z\setminus \{0\} ,$$
$${\hsv}_{0}=span\{L_{0}, M_{0}, N_{0}, C_{L}, C_{LN}, C_{N}\},$$
$${\hsv}_{n+\frac{1}{2}}=span\{Y_{n+\frac{1}{2}}\}, \quad\quad \forall\;n\in\Z.$$

\begin{lem}\label{Theorem 2.2 in BM}{\rm{(cf. \cite{BM})}}
If  $\mathfrak{g}$  is a perfect Lie algebra and
$\widehat{\mathfrak{g}}$ is a universal central extension of
$\mathfrak{g}$, then every derivation of $\mathfrak{g}$ lifts to a
derivation of $\widehat{\mathfrak{g}}$. If $\mathfrak{g}$ is
centerless, the lift is unique and
$Der(\widehat{\mathfrak{g}})\cong Der(\mathfrak{g})$.
\qed
\end{lem}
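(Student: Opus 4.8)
The plan is to treat the three assertions in turn, with the heart of the matter being the existence of a lift. Write $\pi\colon\widehat{\mathfrak g}\to\mathfrak g$ for the covering map and $Z=\ker\pi$, which lies in the center of $\widehat{\mathfrak g}$ by the definition of a central extension. Given $D\in Der(\mathfrak g)$, I would first choose \emph{any} linear map $\delta\colon\widehat{\mathfrak g}\to\widehat{\mathfrak g}$ with $\pi\circ\delta=D\circ\pi$; for instance $\delta=\sigma\circ D\circ\pi$ for a linear section $\sigma$ of $\pi$. The failure of $\delta$ to be a derivation is measured by $\beta(\hat x,\hat y)=\delta[\hat x,\hat y]-[\delta\hat x,\hat y]-[\hat x,\delta\hat y]$. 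Applying $\pi$ and using that $\pi$ is a homomorphism and $D$ a derivation of $\mathfrak g$ gives $\pi\beta(\hat x,\hat y)=D[\pi\hat x,\pi\hat y]-[D\pi\hat x,\pi\hat y]-[\pi\hat x,D\pi\hat y]=0$, so $\beta$ is an alternating bilinear map valued in $Z$.

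Next I would show $\beta$ is a $2$-cocycle: substituting $\delta[\hat x,\hat y]=[\delta\hat x,\hat y]+[\hat x,\delta\hat y]+\beta(\hat x,\hat y)$ into the cyclic sum $\sum_{\mathrm{cyc}}\beta([\hat x,\hat y],\hat z)$, the $\delta$ applied to the (vanishing) Jacobiator drops out, the central values $\beta(\cdot,\cdot)$ bracket to zero, and the remaining iterated brackets cancel in three separate applications of the Jacobi identity, so $\beta\in C^2(\widehat{\mathfrak g},Z)$. Now I invoke the key structural fact that a universal central extension of a perfect Lie algebra is itself centrally closed, i.e. $H^2(\widehat{\mathfrak g},\C)=0$; applied coordinatewise in a basis of $Z$ this produces a linear map $\gamma\colon\widehat{\mathfrak g}\to Z$ with $\beta(\hat x,\hat y)=\gamma([\hat x,\hat y])$. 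Put $\widehat D=\delta-\gamma$. Since $\gamma$ is $Z$-valued and $Z$ is central, subtracting $\gamma$ alters the obstruction precisely by $-\gamma([\hat x,\hat y])$, so $\widehat D$ is a genuine derivation of $\widehat{\mathfrak g}$; and as $\gamma(\widehat{\mathfrak g})\subseteq Z=\ker\pi$ we retain $\pi\widehat D=\pi\delta=D\pi$, so $\widehat D$ lifts $D$. (Alternatively, in the explicit model $\widehat{\mathfrak g}\cong(\mathfrak g\wedge\mathfrak g)/K$ one may simply set $\widehat D(x\wedge y)=Dx\wedge y+x\wedge Dy$ and check it descends to the quotient.)

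For uniqueness, suppose $\widehat D_1,\widehat D_2$ both lift $D$. Then $E=\widehat D_1-\widehat D_2$ is a derivation with $\pi E=0$, so $E(\widehat{\mathfrak g})\subseteq Z$ is central and $E[\hat x,\hat y]=[E\hat x,\hat y]+[\hat x,E\hat y]=0$; since $\widehat{\mathfrak g}$ is perfect this forces $E=0$. For the isomorphism, I would observe that any $\widehat D\in Der(\widehat{\mathfrak g})$ preserves the center of $\widehat{\mathfrak g}$, and that when $\mathfrak g$ is centerless the center of $\widehat{\mathfrak g}$ coincides with $\ker\pi$ (if $z$ is central then $\pi(z)\in C(\mathfrak g)=\{0\}$, so $z\in\ker\pi$). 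Hence each such $\widehat D$ maps $\ker\pi$ into itself and descends to a derivation $\Phi(\widehat D)$ of $\mathfrak g$ with $\pi\widehat D=\Phi(\widehat D)\pi$. The assignments $D\mapsto\widehat D$ and $\widehat D\mapsto\Phi(\widehat D)$ are then mutually inverse linear maps — the first followed by descent returns $D$, and the uniqueness just proved shows $\widehat D$ equals the lift of $\Phi(\widehat D)$ — whence $Der(\widehat{\mathfrak g})\cong Der(\mathfrak g)$.

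The main obstacle is the passage in the second paragraph from ``$\beta$ is a $Z$-valued $2$-cocycle'' to ``$\beta$ is a coboundary'', since this is exactly where the full strength of universality enters, through the central closedness $H^2(\widehat{\mathfrak g},\C)=0$. The cocycle verification and the uniqueness and descent arguments are then routine, and the centerless hypothesis is used only to identify $\ker\pi$ with the center, so that arbitrary derivations of $\widehat{\mathfrak g}$ descend.
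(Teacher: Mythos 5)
Your proof is correct, and it reconstructs the standard argument behind this lemma: the paper itself gives no proof, deferring entirely to the citation of Benkart--Moody, and your lift-via-the-obstruction-cocycle argument (using that a universal central extension of a perfect Lie algebra is centrally closed, i.e.\ $H^{2}(\widehat{\mathfrak{g}},\C)=0$, to kill the $Z$-valued cocycle $\beta$) is essentially the proof given there. You correctly isolate the only nontrivial external input, namely central closedness, and your uniqueness and descent arguments, including the identification of $C(\widehat{\mathfrak{g}})$ with $\ker\pi$ when $\mathfrak{g}$ is centerless, are exactly as in the reference.
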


It follows from Lemma \ref{Theorem 2.2 in BM} and Corollary
\ref{C3.2} that
$$D(C_{L})=D(C_{LN})=D(C_{N})=0,$$ for all $D\in
{\hsv}$. Therefore,
$Der({\hsv})=Inn({\hsv}).$

%
\section{\bf The  Universal Central Extension of ${{\tsv}}$ in the Category of Leibniz Algebras }
%

 The concept of  Leibniz algebra was first introduced by Jean-Louis
Loday in \cite{LP} in his study of the so-called Leibniz homology
as a noncommutative analog of Lie algebra homology. A vector space
$\mathcal{L}$ equipped with a $\C$-bilinear map $[-,-]:
\mathcal{L}\times \mathcal{L}\to \mathcal{L}$  is called a Leibniz
algebra
 if the following Leibniz identity satisfies
\begin{equation}\label{L-J}
[x, [y, z]]= [[x, y], z]-[[x, z], y],\quad \forall\; x, y, z\in
\mathcal{L}.
\end{equation}
Lie algebras are definitely Leibniz algebras. A Leibniz algebra
$\mathcal{L}$ is a Lie algebra if and only if $[x, x]=0$ for all
$x\in \mathcal{L}$.

In \cite{LP}, Jean-Louis Loday and Teimuraz Pirashvili established
the concept of universal enveloping algebras of Leibniz algebras
and interpreted the Leibniz (co)homology $HL_*$ (resp. $HL^*$) as
a Tor-functor (resp. Ext-functor). A bilinear $\C$-valued form
$\psi$ on $\mathcal{L}$ is  called a Leibniz $2$-cocycle if
\begin{equation}\label{L-Co}
\psi(x, [y, z])= \psi([x, y], z)-\psi([x, z], y),\quad
\forall\;x,y, z\in \mathcal{L}.
\end{equation}
Similar to the 2-cocycle on  Lie algebras, a linear function $f$
on $\mathcal{L}$ can induce a Leibniz $2$-cocycle $\psi_f$, that
is,
$$\psi_f(x, y)=f([x, y]),\quad\quad \forall\ x, \ y\in
\mathcal{L}.$$ Such a Leibniz $2$-cocycle is called trival. The
one-dimensional Leibniz central extension corresponding to a
trivial Leibniz $2$-cocycle is also trivial.

In this section, we consider the universal central extension of
the extended Schr\"{o}dinger-Virasoro algebra ${\tsv}$ in the
category of Leibniz algebras.

Let $\mathfrak{g}$ be a Lie algebra. A bilinear form
$f:\mathfrak{g}\longrightarrow \C$  is called invariant if
\begin{equation}\label{1.1}
f([x, y], z)=f(x, [y, z]), \quad \forall\; x,y,z\in \mathfrak{g}.
\end{equation}

\begin{proposition}\label{P1.1}
 There is no non-trivial invariant bilinear form on
 ${\tsv}$.
\end{proposition}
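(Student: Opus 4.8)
The plan is to prove the stronger fact that \emph{every} invariant bilinear form $f$ on $\tsv$ vanishes identically, whether or not it is symmetric; this certainly covers the symmetric case relevant to the comparison with Leibniz central extensions. Throughout I would use only the defining identity $f([x,y],z)=f(x,[y,z])$ together with the $\frac12\Z$-grading $\tsv=\bigoplus_{n}\tsv_{n/2}$. First I would exploit the grading via the Euler element $L_0$: since $[L_0,\,\cdot\,]$ acts on the homogeneous component $\tsv_a$ as multiplication by $a$, taking $y=L_0$ in the invariance identity gives, for homogeneous $x\in\tsv_a$ and $z\in\tsv_c$,
$$-a\,f(x,z)=f([x,L_0],z)=f(x,[L_0,z])=c\,f(x,z),$$
so $(a+c)f(x,z)=0$ and hence $f(\tsv_a,\tsv_c)=0$ whenever $a+c\neq0$. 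This reduces everything to the ``matching-degree'' pairs $f(L_m,L_{-m})$, $f(L_m,N_{-m})$, $f(N_m,L_{-m})$, $f(N_m,N_{-m})$, the pairs involving $M$, and $f(Y_{m+\frac12},Y_{-m-\frac12})$; all cross-pairings between integer-degree and half-integer-degree spaces vanish automatically since their degrees can never cancel.

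Next I would dispose of every pairing involving $M$ and of the $Y$-$Y$ pairings by the ``write the element as a bracket'' trick. Because $M_m=\frac12[N_0,M_m]$ and $[M_m,\tsv]\subseteq\mathrm{span}\{M_k\}$, invariance rewrites each $f(M_m,Z_{-m})$ as a multiple of $f(N_0,M_0)$; writing instead $M_m=\frac1m[L_0,M_m]$ for $m\neq0$ expresses the same quantity as a multiple of $f(L_0,M_0)$. Comparing the two expressions for $f(M_m,N_{-m})$ yields $f(N_0,M_0)=\frac2m f(L_0,M_0)$ for all $m\neq0$, and taking two distinct values of $m$ forces $f(N_0,M_0)=f(L_0,M_0)=0$, whence all $f(M_m,Z_{-m})=0$. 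The remaining $M$-pairings $f(L_m,M_{-m})$ and $f(N_m,M_{-m})$ die even faster: after moving $M_{-m}=\frac12[N_0,M_{-m}]$ across via invariance one is left with $[L_m,N_0]=0$ and $[N_m,N_0]=0$, respectively. Likewise $Y_{n+\frac12}=[N_0,Y_{n+\frac12}]$ and $[Y_{m+\frac12},Y_{-m-\frac12}]=(2m+1)M_0$ give $f(Y_{m+\frac12},Y_{-m-\frac12})=(2m+1)f(N_0,M_0)=0$.

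It then remains to treat the Virasoro–Heisenberg block spanned by the $L$'s and $N$'s. The mixed and $N$-$N$ pairings collapse quickly: using $N_m=\frac1m[L_0,N_m]$ (and $N_0=-[L_1,N_{-1}]$ in the degree-zero case) together with $[N_m,N_{-m}]=0$ and $[N_{-1},N_0]=0$ gives $f(N_m,N_{-m})=0$ for all $m$, while the identity $f([L_m,L_{-m}],N_0)=f(L_m,[L_{-m},N_0])=0$ (since $[L_{-m},N_0]=0$ and $[L_m,L_{-m}]=-2mL_0$) forces $f(L_0,N_0)=0$, with $f(N_0,L_0)=0$ obtained by reading the same identity in the other slot; this in turn yields $f(L_m,N_{-m})=f(N_m,L_{-m})=0$.

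The genuine obstacle is the Witt pairing $g(m):=f(L_m,L_{-m})$, because here the bracket trick only returns the tautology $g(0)=g(0)$. This is exactly the statement that the Witt algebra admits no nonzero invariant form, and to establish it I would instead extract a recursion from the three-$L$ invariance identity $f([L_a,L_b],L_{-a-b})=f(L_a,[L_b,L_{-a-b}])$, namely $(b-a)\,g(a+b)=-(a+2b)\,g(a)$. Specializing $a=b=1$ gives $0=-3g(1)$, so $g(1)=0$; then $a=0$ gives $g(b)=-2g(0)$ for all $b$, and $b=1$ forces $g(0)=0$, hence $g\equiv0$. This recursion is the only place where a one-line reduction does not suffice, so it is the step I would write out most carefully. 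Assembling all these vanishings gives $f=0$, proving the proposition.
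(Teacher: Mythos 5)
Your proof is correct and follows essentially the same route as the paper's: both are direct elementary verifications from the invariance identity $f([x,y],z)=f(x,[y,z])$, neither assuming symmetry, and both reduce everything to matched-degree pairs which are then killed by writing one argument as a bracket. The only cosmetic differences are that you front-load the grading reduction via $L_0$ in one uniform step (the paper runs the equivalent computation pair by pair), and you dispose of the Witt block $f(L_m,L_{-m})$ by a recursion where the paper uses the one-line identity $f(L_{-n},L_n)=\tfrac{1}{3n}f(L_{-2n},[L_n,L_n])=0$.
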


\begin{proof}
Let $f: {\tsv} \times
{\tsv} \longrightarrow \C$ be an invariant
bilinear form on ${\tsv}$.

 (1)\; For $m\neq 0$, we have
\begin{equation}\nonumber
f(L_{m}, L_{n})=-\frac{1}{m}f([L_{m},L_{0}], L_{n})
=-\frac{1}{m}f(L_{m},[L_{0}, L_{n}])=-\frac{n}{m}f(L_{m},L_{n}),
\end{equation}
So
$$f(L_{m},L_{n})=0,\quad\quad     m+n\neq 0, m\neq 0.$$
Similarly, we have $$f(L_{m},N_{n})=0, \quad f(N_{m},N_{n})=0,
\quad\quad m+n\neq 0, m\neq 0.$$ For $n\neq 0$, we have
$$f(L_{0}, L_{n})=\frac{1}{n}f(L_{0},[L_{0}, L_{n}])=\frac{1}{n}f([L_{0},L_{0}], L_{n})=0,$$
$$f(L_{-n}, L_{n})=\frac{1}{3n}f([L_{-2n},L_{n}], L_{n})=\frac{1}{3n}f(L_{-2n},[L_{n}, L_{n}])=0,$$
Similarly, we can get
$$f(L_{0}, N_{n})=0,\; f(N_{0}, N_{n})=0,\; f(L_{-n}, N_{n})=0, \; f(N_{-n}, N_{n})=0.$$
On the other hand,
$$f(L_{0}, L_{0})=\frac{1}{2}f([L_{-1},L_{1}], L_{0})=\frac{1}{2}f(L_{-1},[L_{1},L_{0}])=-\frac{1}{2}f(L_{-1},L_{1})=0.$$
$$f(L_{0}, N_{0})=\frac{1}{2}f([L_{-1},L_{1}], N_{0})=\frac{1}{2}f(L_{-1},[L_{1},N_{0}])=0.$$
$$f(N_{0}, N_{0})=f([L_{-1},N_{1}], N_{0})=f(L_{-1},[N_{1},N_{0}])=0.$$
Therefore,
$$f(L_{m}, N_{n})=0, \quad  f(N_{m}, N_{n})=0, \quad  f(L_{m}, L_{n})=0, \quad\quad \forall\; m,n\in\Z.$$
Similarly,  we obtain
$$f(L_{m}, M_{n})=0, \quad f(M_{m}, M_{n})=0,   \quad \forall\; m,n\in\Z.$$

(2)\; For all $m,n\in\Z$, we have
$$f(L_{m},
Y_{n+\frac{1}{2}})=\frac{1}{n+\frac{1}{2}}f(L_{m},[L_{0},Y_{n+\frac{1}{2}}])
=\frac{1}{n+\frac{1}{2}}f([L_{m},L_{0}],Y_{n+\frac{1}{2}})
=-\frac{m}{n+\frac{1}{2}}f(L_{m},Y_{n+\frac{1}{2}}).$$ Then
$\dis\frac{m+n+\frac{1}{2}}{n+\frac{1}{2}}f(L_{m},Y_{n+\frac{1}{2}})=0.$
Obviously, $f(L_{m},Y_{n+\frac{1}{2}})=0$ for all $ m,n\in\Z$.

(3)\; For all $m,n\in\Z$, we have
$$f(N_{m}, M_{n})=\frac{1}{2}f(N_{m},[N_{0},
M_{n}])=\frac{1}{2}f([N_{m}, N_{0}], M_{n})=0,$$
$$f(N_{m},Y_{n+\frac{1}{2}})=f(N_{m},[N_{0},Y_{n+\frac{1}{2}}])=f([N_{m},N_{0}],Y_{n+\frac{1}{2}})=0,$$
$$f(M_{m},Y_{n+\frac{1}{2}})=\frac{1}{n+\frac{1}{2}}f(M_{m},[L_{0},Y_{n+\frac{1}{2}}])
=-\frac{1}{n+\frac{1}{2}}f([M_{m},Y_{n+\frac{1}{2}}],L_{0})=0,$$
\begin{eqnarray*}
f(Y_{m+\frac{1}{2}},Y_{n+\frac{1}{2}})&=&\frac{1}{m+\frac{1}{2}}f([L_{0},Y_{m+\frac{1}{2}}],Y_{n+\frac{1}{2}})
=\frac{1}{m+\frac{1}{2}}f(L_{0},[Y_{m+\frac{1}{2}},Y_{n+\frac{1}{2}}])
\\
&=&\frac{m-n}{m+\frac{1}{2}}f(L_{0},M_{m+n+1})=0.
\end{eqnarray*}
\end{proof}

\begin{remark}\label{R1.1}
 In fact, it is enough to check
that Proposition \ref{P1.1} holds for the set of generators
$\{L_{-2}, L_{-1},L_{1}, L_{2}, N_{1}, Y_{\frac{1}{2}}\}$. By the
proof of Proposition \ref{P1.1}, we can see that the process of the
computation  is independent of the symmetry of the bilinear form.
Similar to the method in section 4 in \cite{HPL}, we can deduce that
$$  HL^{2}({\tsv}, \C) = H^{2}({\tsv}, \C),$$
where $HL^{2}({\tsv}, \C)$ is the second Leibniz
cohomology group of ${\tsv}$. That is to say,
the universal central extension of ${\tsv}$ in
the category of Leibniz algebras is the same as that in the
category of Lie algebras.
\end{remark}


%
\section{\bf  The Automorphism Group  of  ${\tsv}$}
%
\label{sub5-c-related}

Denote by $Aut({\tsv})$ and ${\mathcal{I}}$ the  automorphism
group  and the inner automorphism group of ${\tsv}$ respectively.
Obviously, ${\mathcal{I}}$  is generated by $\exp(k {\rm{ad}}
M_{m}+l {\rm{ad}} Y_{n+\frac{1}{2}})$,  $m,n\in\Z,$  $k, l\in\C.$

For convenience, denote
$$L=span\{L_{n}\; |n\in\Z\},\;\;\;N=span\{N_{n}\;|n\in\Z\},$$
$$M=span\{M_{n}\;|n\in\Z\},\;\;\;Y=span\{Y_{n+\frac{1}{2}}\; |n\in\Z\}.$$

\begin{lem}\label{L5.2}
Let $\si\in Aut({\tsv})$, then
$$\si(M_{n})\in M,\quad \si(Y_{n+\frac{1}{2}})\in M+Y,\quad \si(N_{n})\in M+Y+N,$$
for all $n\in\Z$. In particular,
$$\si(N_{0})=\sum\limits_{i=p}^{q}
a_{i}M_{i}+N_{0}+\sum\limits_{j=s}^{t} b_{j}Y_{j+\frac{1}{2}},$$
for some $a_{i}, b_{j}\in\C$ and $p,q, s,t\in\Z.$
\end{lem}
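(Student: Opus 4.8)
The plan is to exhibit the chain $M\subset M+Y\subset M+Y+N$ as a chain of \emph{characteristic} ideals of $\tsv$ (ideals preserved by every automorphism), and then to pin down the $N$-component of $\sigma(N_0)$ using the scalar behaviour of $\mathrm{ad}\,N_0$ on $M$. The three asserted containments $\sigma(M_n)\in M$, $\sigma(Y_{n+\frac12})\in M+Y$, $\sigma(N_n)\in M+Y+N$ then follow at once.

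First I would identify $\fr:=M+Y+N$ as the unique maximal solvable ideal of $\tsv$. A direct inspection of the brackets shows that $\fr$ is an ideal and that its derived series is $\fr\supset[\fr,\fr]=M+Y\supset[M+Y,M+Y]=M\supset 0$ (the only brackets producing $N$ involve $L$, which lies outside $\fr$); in particular $\fr$ is solvable. Moreover $\tsv/\fr$ is the centerless Virasoro (Witt) algebra, which is simple and perfect, hence has no nonzero solvable ideal. Therefore the image in $\tsv/\fr$ of any solvable ideal of $\tsv$ vanishes, so every solvable ideal is contained in $\fr$; thus $\fr$ is the unique maximal solvable ideal. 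Any $\sigma\in\mathrm{Aut}(\tsv)$ carries solvable ideals to solvable ideals, so $\sigma(\fr)=\fr$, and since the derived subalgebra of a characteristic ideal is again characteristic, we also get $\sigma(M+Y)=[\fr,\fr]$ preserved and $\sigma(M)=[M+Y,M+Y]$ preserved. These are precisely the desired containments.

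For the ``in particular'' statement, the key observation is that $\mathrm{ad}\,N_0$ acts as the scalar $2$ on $M$, since $[N_0,M_n]=2M_n$. Write $\sigma(N_0)=\sum_i a_iM_i+\sum_j b_jY_{j+\frac12}+\sum_k c_kN_k$ (a finite sum, with $a_i,b_j,c_k\in\C$). Using $\mathrm{ad}\,\sigma(N_0)=\sigma\circ(\mathrm{ad}\,N_0)\circ\sigma^{-1}$ together with $\sigma(M)=M$, one sees that $\mathrm{ad}\,\sigma(N_0)$ acts as $2\cdot\mathrm{id}$ on $M$: for $v\in M$ we have $\sigma^{-1}(v)\in M$, so $\mathrm{ad}\,\sigma(N_0)(v)=\sigma\big(2\,\sigma^{-1}(v)\big)=2v$. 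On the other hand, computing directly from the brackets and using $[M_i,M_n]=0$ and $[Y_{j+\frac12},M_n]=0$ gives $[\sigma(N_0),M_n]=\sum_k 2c_kM_{k+n}$. Equating $\sum_k 2c_kM_{k+n}=2M_n$ for every $n\in\Z$ forces $c_0=1$ and $c_k=0$ for $k\neq0$, so the $N$-component of $\sigma(N_0)$ is exactly $N_0$, which yields $\sigma(N_0)=\sum_{i=p}^{q}a_iM_i+N_0+\sum_{j=s}^{t}b_jY_{j+\frac12}$.

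The routine part is the bracket bookkeeping that verifies $\fr$ is an ideal and computes its derived series. I expect the only point needing slight care to be the justification that automorphisms preserve the radical in this infinite-dimensional setting; this is handled cleanly by the \emph{uniqueness} of the maximal solvable ideal, established via the simple perfect quotient $\tsv/\fr$, after which preservation is automatic ($\sigma(\fr)$ is again a maximal solvable ideal, hence equals $\fr$). By contrast, the pinning-down of the $N_0$-component is immediate once the scalar action of $\mathrm{ad}\,N_0$ on $M$ is noted, so the genuine content of the lemma lies entirely in the characteristic-ideal chain.
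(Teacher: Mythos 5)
Your proof is correct, but it reaches the conclusion by a genuinely different route than the paper. The paper first classifies \emph{all} nontrivial ideals of $\tsv$: it shows every nonzero proper ideal is one of $M$, $M\oplus Y$, $M\oplus\C N_{0}\oplus Y$, $M\oplus N\oplus Y$, and then distinguishes $M$ from $M\oplus Y$ by comparing their centers and using that $M$ is abelian while $M\oplus Y$ is not, so that $\si$ must fix each of $M$ and $M\oplus Y$ individually; the $N$-component of $\si(N_{0})$ is then pinned down by expanding $\si(M_{0})=\sum f(m)M_{m}$ and comparing leading and trailing coefficients in $\si[N_{0},M_{0}]=[\si(N_{0}),\si(M_{0})]$. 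You instead exhibit $M\subset M+Y\subset M+Y+N$ directly as a characteristic chain: $M+Y+N$ is the unique maximal solvable ideal (every solvable ideal dies in the simple, non-solvable Witt quotient $\tsv/(M+Y+N)$), and the other two terms are its successive derived subalgebras, all of which are automatically automorphism-invariant. Your treatment of the ``in particular'' clause, via the observation that $\ad{N_{0}}$ acts as the scalar $2$ on the characteristic ideal $M$ so that $\ad{\si(N_{0})}$ must do the same, is cleaner than the paper's coefficient comparison. The trade-off is that the paper's longer argument yields the complete ideal lattice of $\tsv$ as a byproduct (a fact of independent interest), whereas your argument is shorter, more conceptual, and generalizes more readily; both establish exactly the statement of the lemma, and your handling of the one delicate point --- that ``the radical is characteristic'' needs justification in infinite dimensions --- via uniqueness of the maximal solvable ideal is sound.
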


\begin{proof}
Let $I$ be a nontrivial ideal of ${\tsv}$. Then $I$ is a
$L_{0}$-module. Since the decomposition of eigenvalue subspace of
$L_{0}$ is in concordance  with the $\dis\frac{1}{2}\Z$-grading of
${\tsv}$, we have
$$I=\bigoplus\limits_{n\in\Z}I_{\frac{n}{2}}
=\bigoplus\limits_{n\in\Z}I\bigcap{\tsv}_{\frac{n}{2}}.$$ Hence,
there exists some $n\in\Z$ such that $aL_{n}+bM_{n}+cN_{n}\in I$ or
$Y_{n+\frac{1}{2}}\in I$, where $a,b,c\in\C$ and not all zero. If
$aL_{n}+bM_{n}+cN_{n}\in I$, then
$$[aL_{n}+bM_{n}+cN_{n},M_{0}]=2cM_{n}\in I,\;\; [aL_{n}+bM_{n}+cN_{n},N_{0}]=-2bM_{n}\in I.$$
 If $b=c=0$, then $a\neq 0$ and $L_{n}\in\I$.
 But $[L_{n}, {\tsv}]={\tsv}$ for any $n\in\Z$, then we
 have $I={\tsv}$, a contradiction.
So $b\neq 0$ or $c\neq 0$. Therefore, $M_{n}\in I$, and  we  have
$aL_{n}+cN_{n}\in I$. Since $[aL_{n}+cN_{n}, N_{1}]=aN_{n+1}\in
I$, we get $N_{n+1}\in I$ if $a\neq 0$.

(1) If there exists some $M_{n}\in I$, by the fact that $[N_{m-n},
M_{n}]=2M_{m}$ for all $m\in\Z$, we obtain $M\subseteq I$.

(2) If there exists some $N_{n}\in I$ and $n\neq 0$, then
$N\subseteq I$ since $[L_{m-n}, N_{n}]=nN_{m}\in I$ for all
$m\in\Z$. On the other hand,  we have
$$[N_{0},M_{m}]=2M_{m}, \;\; [N_{0},Y_{m+\frac{1}{2}}]=Y_{m+\frac{1}{2}},$$
for all $m\in\Z$. So  $M\subseteq I,\; Y\subseteq I$, and
therefore $N\bigoplus M\bigoplus Y\subseteq I$.

If $N_{0}\in I$, according to the proof above, we have $M\subseteq
I, Y\subseteq I$. In addition, $[L, N_{0}]=0$ and $[N, N_{0}]=0$,
so $\C N_{0}\bigoplus M\bigoplus Y\subseteq I$.

(3) If there exists some $Y_{n+\frac{1}{2}}\in I$, we have
$Y\subseteq I$ since
$[N_{m-n},Y_{n+\frac{1}{2}}]=Y_{m+\frac{1}{2}}$ for all $m\in\Z$.
Moreover, $[Y_{m+\frac{1}{2}}, Y_{\frac{1}{2}}]=mM_{m+1}$ for all
$m\in\Z$ and $[Y_{1+\frac{1}{2}}, Y_{-1+\frac{1}{2}}]=2M_{1}$, so
$M\subseteq I$.

Set $\mathfrak{I_{1}}=M,\; \mathfrak{I_{2}}=M\bigoplus Y, \;
\mathfrak{I_{3}}=M\bigoplus\C N_{0}\bigoplus Y, \;
\mathfrak{I_{4}}=M\bigoplus\ N \bigoplus Y.$ Then
$I=\mathfrak{I}_{k}$ for some $k=1,2, 3,4$. Obviously,
$\mathfrak{I_{1}}$ and $\mathfrak{I_{2}}$ both have
infinite-dimensional center $M$, while the center of
$\mathfrak{I_{3}}$ and $\mathfrak{I_{4}}$ are zero, i.e.,
 $$C(\mathfrak{I_{1}})=C(\mathfrak{I_{2}})=M,\;\;\; C(\mathfrak{I_{3}})=C(\mathfrak{I_{4}})=0.$$
For any $\si\in Aut({\tsv})$, $\si(I)$ is still a non-trivial ideal
of ${\tsv}$ and $\si(C(I))=C(\si(I))$. Then
$$\si(\mathfrak{I_{i}})=\mathfrak{I_{j}}, \;\; i,j=1,2; \quad \si(\mathfrak{I}_{k})=\mathfrak{I}_{l}, \;\; k,l=3,4.$$
If $\si(\mathfrak{I_{1}})=\mathfrak{I_{2}}$, then for every
$m\in\Z$, there exists unique $x_{m}=\sum a_{m_{i}}M_{m_{i}}\in
\mathfrak{I_{1}}$ such that $\si(x_{m})=Y_{m+\frac{1}{2}}$. Then
$(m-n)M_{m+n+1}=0$ for all $m,n\in\Z$, which is impossible.
Therefore,
$$\si(\mathfrak{I_{i}})=\mathfrak{I_{i}},\;\;\; i=1,2.$$
Moreover, we obtain
\begin{equation}\label{5.1.1}
\si(M_{n})\in M, \;\;\; \si(Y_{n+\frac{1}{2}})\in M+Y.
\end{equation}
Assume that $$\si(N_{0})=\sum a_{i}M_{i}+\sum b_{j}N_{j}+\sum
c_{k}Y_{k+\frac{1}{2}},$$ where $a_{i}, b_{j}, c_{k}\in\C $.
According to (\ref{5.1.1}), $\si(M_{0})\in M$. So there exist some
$f(m)\in\C^{*}$ such that $\si(M_{0})=\sum f(m) M_{m}$. By
$\si[N_{0}, M_{0}]=[\si(N_{0}) , \si(M_{0}) ]$, we get
\begin{equation}\label{5.1.2}
\sum\limits_{m} f(m) M_{m}=\sum\limits_{m, j} b_{j}f(m) M_{m+j}.
\end{equation}
Set $p=min\{m\in\Z | f(m)\neq 0\}, \; q=max\{m\in\Z | f(m)\neq 0\}$.
If $j\neq 0$, we have
$$ p+j<p \;\;\; {\rm if }\;j<0;\;\;\; ({\rm resp.} \;q+j>q \; \; {\rm if}\; j>0).$$
By (\ref{5.1.2}), it is easy to see that $b_{j}f(p)=0 \;(\; {\rm
resp.}\; b_{j}f(q)=0 )$. So $b_{j}=0$ for all $j\neq 0$.  Then by
(\ref{5.1.2}), $b_{0}=1$. Therefore,
$$\si(N_{0})=\sum a_{i}M_{i}+N_{0}+\sum c_{k}Y_{k+\frac{1}{2}}.$$
This forces that $\si(\mathfrak{I}_{k})=\mathfrak{I}_{k},\; k=3,4.$
\end{proof}


\begin{lem}\label{L5.3}
For any $\si\in Aut({\tsv})$, there exist some $\tau\in
{\mathcal{I}}$  and $\epsi\in\{\pm1\}$ such that
\begin{equation}\label{E5.3.1}
\bar{\si}(L_{n})= a^{n}\epsi L_{\epsi n}+a^{n}\lambda N_{\epsi n},
\end{equation}
\begin{equation}\label{E5.3.2}
\bar{\si}(N_{n})= a^{n} N_{\epsi n},
\end{equation}
\begin{equation}\label{E5.3.3}
\bar{\si}(M_{n})=\epsi d^{2}a^{n-1}M_{\epsi(n-2\lambda)},
\end{equation}
\begin{equation}\label{E5.3.4}
\bar{\si}(Y_{n+\frac{1}{2}})=da^{n}
Y_{\epsi(n+\frac{1}{2}-\lambda)},
\end{equation}
%
where $\bar{\si}=\tau^{-1}\si,     \lambda\in\Z$ and $a,
d\in\C^{*}$. Conversely, if $\bar{\si}$ is a linear operator on
${\tsv}$ satisfying (\ref{E5.3.1})-(\ref{E5.3.4}) for some
$\epsi\in\{\pm1\}$, $\lambda\in\Z$ and $a, d\in\C^{*}$, then
$\bar{\si}\in Aut({\tsv})$.
\end{lem}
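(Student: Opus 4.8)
The plan is to reduce an arbitrary $\si$ to normal form by one inner automorphism and then diagonalize the result using the two commuting semisimple operators ${\rm ad}\,L_0$ and ${\rm ad}\,N_0$. First I would construct $\tau$. By Lemma \ref{L5.2}, $\si(N_0)=\sum_i a_iM_i+N_0+\sum_j b_jY_{j+\frac12}$. The subspace $M+Y$ is a two-step nilpotent subalgebra on which every ${\rm ad}\,u$ ($u\in M+Y$) is nilpotent, since it strictly raises the ${\rm ad}\,N_0$-eigenvalue and that eigenvalue lies in $\{0,1,2\}$; hence $\exp({\rm ad}\,u)\in\mathcal{I}$. A short computation gives $\exp({\rm ad}\,u)(N_0)=N_0-\sum l_jY_{j+\frac12}-2\sum k_iM_i+\frac12[u,[u,N_0]]$ for $u=\sum l_jY_{j+\frac12}+\sum k_iM_i$ (the series terminates at second order). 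Choosing $l_j=-b_j$ matches the $Y$-part and then a solvable linear system for the $k_i$ matches the $M$-part, so there is a $\tau\in\mathcal{I}$ with $\tau(N_0)=\si(N_0)$; thus $\bar\si=\tau^{-1}\si$ satisfies $\bar\si(N_0)=N_0$.

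Since $\bar\si(N_0)=N_0$, the automorphism $\bar\si$ commutes with ${\rm ad}\,N_0$ and preserves its eigenspaces $M$, $Y$, $L+N$ (eigenvalues $2,1,0$); intersecting with Lemma \ref{L5.2} gives $\bar\si(M)=M$, $\bar\si(Y)=Y$, $\bar\si(N)=N$ and $\bar\si(L)\subseteq L+N$. The crux is to pin down $\bar\si(L_0)$. Writing $\bar\si(L_0)=\sum_jp_jL_j+\sum_kq_kN_k\in L+N$, the operator ${\rm ad}\,\bar\si(L_0)$ acts on $Y$ as a finite sum of index-shift operators, and each $\bar\si(Y_{n+\frac12})\in Y$ is a finite-support eigenvector of eigenvalue $n+\frac12$. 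Because $\{\bar\si(Y_{n+\frac12})\}$ is a basis of $Y$, these eigenvectors have top (and bottom) indices that are unbounded; comparing the coefficient at the extreme index in the eigenvector equation yields a relation that is linear in that index and must hold for unboundedly many values, which forces every nonzero-shift coefficient to vanish. Hence $\bar\si(L_0)=\epsi L_0+\lambda N_0$, and requiring the induced index map on $Y$ to be a bijection of $\frac12+\Z$ forces $\epsi\in\{\pm1\}$ and $\lambda\in\Z$. I expect this finiteness argument to be the main obstacle.

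Finally I would propagate. Using $[L_0,-]$ on each generator together with the known $\bar\si(L_0)$, the eigenvalue equations collapse the images to single or double basis terms: $\bar\si(N_n)=c_nN_{\epsi n}$, $\bar\si(L_n)=\alpha_nL_{\epsi n}+\beta_nN_{\epsi n}$, and $\bar\si(Y_{n+\frac12})=y_nY_{\epsi(n+\frac12-\lambda)}$ (the $N$- and $L$-components outside the predicted slot are killed by Step B and the eigenvalue match). Feeding these into $[L_m,N_n]=nN_{m+n}$, $[L_m,L_n]=(n-m)L_{m+n}$ and $[N_m,Y_{n+\frac12}]=Y_{m+n+\frac12}$ gives the multiplicative relations $c_{m+n}=a^mc_n$, $\alpha_{m+n}=\epsi\alpha_m\alpha_n$, $y_{m+n}=a^my_n$, which with $c_0=1$, $\alpha_0=\epsi$ and bijectivity force $c_n=a^n$, $\alpha_n=\epsi a^n$, $y_n=da^n$ for some $a,d\in\C^*$. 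The coefficient $\beta_n$ a priori solves an affine equation $\beta_n=a^n(\lambda+en)$, but substituting $y_n=da^n$ into $[L_m,Y_{n+\frac12}]=(n+\frac{1-m}{2})Y_{m+n+\frac12}$ forces $e=0$, so $\beta_n=a^n\lambda$; and $\bar\si(M_n)=\epsi d^2a^{n-1}M_{\epsi(n-2\lambda)}$ is then read off from $[Y_{m+\frac12},Y_{n+\frac12}]=(m-n)M_{m+n+1}$ (both signs of $\epsi$ checking out). This proves (\ref{E5.3.1})--(\ref{E5.3.4}). For the converse I would substitute the four formulas into each bracket of Definition \ref{D1.1} and verify equality directly; as the index maps $n\mapsto\epsi n$, $n\mapsto\epsi(n-2\lambda)$, $n\mapsto\epsi(n+\frac12-\lambda)$ are bijections and all scalars are nonzero, $\bar\si$ is bijective, so $\bar\si\in Aut(\tsv)$.
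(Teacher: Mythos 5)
Your proposal is correct and follows essentially the same route as the paper: normalize $\si(N_{0})$ to $N_{0}$ by an explicit inner automorphism, use the eigenspace decompositions of ${\rm ad}\,N_{0}$ and ${\rm ad}\,\bar{\si}(L_{0})$ together with Lemma \ref{L5.2} to locate the images of the basis elements, and then pin down all coefficients from the bracket relations. The only real divergence is local: where you determine $\bar{\si}(L_{0})=\epsi L_{0}+\lambda N_{0}$ and $\alpha_{n}=\epsi a^{n}$ by a self-contained extreme-index argument on $Y$ plus the multiplicativity relation, the paper instead quotes the known classification of automorphisms of the Witt algebra (applied to the induced map on $(L+N)/N$) and fixes $\bar{\si}(L_{0})$ via its action on $M$ rather than on $Y$ -- both are sound.
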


\begin{proof}
 By Lemma \ref{L5.2}, for all $\si\in Aut({\tsv})$,
$\si(N_{0})=\sum\limits_{i=p}^{q}
a_{i}M_{i}+N_{0}+\sum\limits_{j=s}^{t} b_{j}Y_{j+\frac{1}{2}}$ for
some $a_{i}, b_{j}\in\C$ and $p,q, s,t\in\Z.$ Let
$$\tau=\prod\limits_{j=s}^{t}exp(-b_{j}{\rm{ad}} Y_{j+\frac{1}{2}})
\prod\limits_{i=p}^{q}exp(-\frac{a_{i}}{2}{\rm{ad}} M_{i})
\prod\limits_{i,j=s}^{t}exp(\frac{i-j}{4}b_{i}b_{j}{\rm{ad}}
M_{i+j+1})\in {\mathcal{I}},$$ then we can deduce that $
\si(N_{0})=\tau(N_{0})$, that is,
$$\tau^{-1}\si(N_{0})=N_{0}. $$
Set $\bar{\si}=\tau^{-1}\si$. By $[N_{0},
\bar{\si}(L_{m})]=[N_{0}, \bar{\si}(N_{m})]=0$  and  $[N_{0},
\bar{\si}(Y_{m+\frac{1}{2}})]=\bar{\si}(Y_{m+\frac{1}{2}})$  for
all $m\in\Z$, we get
$$\bar{\si}(L_{m})\in L+N,  \;\;  \bar{\si}(N_{m})\in N, \;\; \bar{\si}(Y_{m+\frac{1}{2}})\in Y.$$
 For any  $\bar{\si}\in Aut({\tsv})$, denote
$\bar{\si}|_{L}=\bar{\si}'$. By the automorphisms of the classical
Witt algebra, $\bar{\si}'(L_{m})=\epsilon a^{m}L_{\epsilon m}$ for
all $m\in\Z$, where $a\in\C^{*}$ and $\epsi\in\{\pm1\}$.
 Assume that
\begin{eqnarray*}
& &\bar{\si}(L_{0})= \epsi L_{0}+ \sum \lambda_{i} N_{i},\;\;\;
\bar{\si}(L_{n})= a^{n}\epsi L_{\epsi
n}+a^{n}\sum\lambda(n_{i})N_{n_{i}}, \;\; n\neq 0,
\\
& &\bar{\si}(N_{n})= a^{n}\sum \mu(n_{j}) N_{n_{j}},
\bar{\si}(M_{n})=a^{n}\sum f(n_{r})M_{n_{r}},
\bar{\si}(Y_{n+\frac{1}{2}})=a^{n}\sum
h(n_{t}+\frac{1}{2})Y_{n_{t}+\frac{1}{2}},
\end{eqnarray*}
where each formula is of finite terms and $\mu(n_{j}), f(n_{r}),
h(n_{t}+\frac{1}{2})\in\C^{*}$, $\lambda(i), \lambda(n_{i})\in\C$.
From $[\bar{\si}(L_{0}), \bar{\si}(M_{m})]=m\bar{\si}(M_{m})$, we
have
$$\sum \epsi m_{r}f(m_{r})M_{m_{r}}+ 2\sum \lambda_{i}f(m_{r})M_{i+m_{r}} =m\sum f(m_{r})M_{m_{r}}.$$
This forces that $\lambda_{i}=0$ for $i\neq 0$  and $\epsi
m_{r}+2\lambda_{0}=m.$ So $m_{r}=\epsi(m-2\lambda_{0})$ and
$$\bar{\si}(L_{0})= \epsi L_{0}+  \lambda_{0} N_{0},\quad
\bar{\si}(M_{n})=a^{n}
f(\epsi(n-2\lambda_{0}))M_{\epsi(n-2\lambda_{0})},$$ for all
$n\in\Z$. From   $[\bar{\si}(L_{n}), \bar{\si}(M_{0})]=0$, we get
$$ \lambda_{0}M_{\epsi n-2\epsi \lambda_{0}}=\sum \lambda_{n_{i}}M_{n_{i}-2\epsi
\lambda_{0}}. $$ Then $n_{i}=\epsi n$ and $\lambda_{\epsi
n}=\lambda_{0}$ for all $n\in\Z$. Therefore,
$$\bar{\si}(L_{n})= a^{n}\epsi L_{\epsi n}+ a^{n} \lambda_{0}N_{\epsi n},\quad \; \rm{for\; all\; n\in\Z}.$$
Since $[\bar{\si}(L_{0}), \bar{\si}(N_{n})]=n\bar{\si}(N_{n})$, we
have $ \sum (\epsi n_{j}-n)\mu(n_{j}) N_{n_{j}}=0.$ Obviously,
$n_{j}=\epsi n$ and $$\bar{\si}(N_{n})= a^{n} \mu(\epsi n)
N_{\epsi n},$$ for all $n\in\Z$, where $\mu(0)=1$. Comparing the
coefficients of $Y_{n_{t}+\frac{1}{2}}$ on the both sides of
$[\bar{\si}(L_{0}),\bar{\si}(Y_{n+\frac{1}{2}})]=(n+\frac{1}{2})\bar{\si}(Y_{n+\frac{1}{2}})$,
we obtain $n_{t}+\frac{1}{2}=\epsi(n+\frac{1}{2}-\lambda_{0})$,
which implies that $\lambda_{0}\in\Z$. So
$$\bar{\si}(Y_{n+\frac{1}{2}})=a^{n}h(\epsi(n+\frac{1}{2}-\lambda_{0}))Y_{\epsi(n+\frac{1}{2}-\lambda_{0})},
\quad \; \rm{for\; all\; n\in\Z}.$$ By
$[\bar{\si}(N_{n}),\bar{\si}(M_{m})]=2\bar{\si}(M_{m+n})$, we get
$$\mu(\epsi n)f(\epsi(m-2\lambda_{0}))=f(\epsi(m+n-2\lambda_{0})).$$
Letting  $m=2\lambda_{0}$, we obtain
$$f(\epsi n)=f(0)\mu(\epsi n).$$
 By the coefficients of $Y_{\epsi(m+n+\frac{1}{2}-\lambda_{0})}$ on
 the
 both sides of
$[\bar{\si}(N_{m}),\bar{\si}(Y_{n+\frac{1}{2}})]=\bar{\si}(Y_{m+n+\frac{1}{2}})$,
we have
$$h(\epsi(m+\frac{1}{2}))=\mu(\epsi m)h(\frac{\epsi}{2}).$$
Similarly, comparing  the coefficients  of $N_{\epsi (m+n)}$ on
the both sides of
$[\bar{\si}(L_{n}),\bar{\si}(N_{m})]=m\bar{\si}(N_{m+n})$, we have
$m\mu(\epsi m)=m\mu(\epsi (m+n))$ for all $m,n\in\Z$. Then
$$\mu(\epsi m)=\mu(0)=1,$$ for all $m\in\Z$. Therefore,
$$f(\epsi m)=f(0),\;\;\; h(\epsi(m+\frac{1}{2}))=h(\frac{\epsi}{2}).$$
Finally, we deduce that $\epsi h(\frac{\epsi}{2})^{2}=af(0)$ by
comparing the coefficient of $M_{\epsi(m+n+1-2\lambda_{0})}$ on
the
 both sides of
$[\bar{\si}(Y_{m+\frac{1}{2}}),\bar{\si}(Y_{n+\frac{1}{2}})]=(m-n)\bar{\si}(M_{m+n+1}).$
Let $d=h(\frac{\epsi}{2})$, then $f(0)=\epsi a^{-1}d^{2}$.
Therefore,
\begin{eqnarray*}
& & \bar{\si}(L_{n})= a^{n}\epsi L_{\epsi
n}+a^{n}\lambda_{0}N_{\epsi n},\;\;\; \bar{\si}(N_{n})= a^{n}
N_{\epsi n},
\\
& &\bar{\si}(M_{n})=\epsi
d^{2}a^{n-1}M_{\epsi(n-2\lambda_{0})},\;\;\;
\bar{\si}(Y_{n+\frac{1}{2}})=da^{n}
Y_{\epsi(n+\frac{1}{2}-\lambda_{0})}.
\end{eqnarray*}
It is easy to check the converse part of the theorem.

\end{proof}

Denote by $\bar{\si}(\epsi, \lambda, a, d)$ the automorphism of
$\tsv$ satisfying (\ref{E5.3.1})-(\ref{E5.3.4}), then
\begin{equation}\label{5.30}
\bar{\si}(\epsi_{1}, \lambda_{1}, a_{1}, d_{1})
\bar{\si}(\epsi_{2}, \lambda_{2}, a_{2},
d_{2})=\bar{\si}(\epsi_{1}\epsi_{2},
\epsi_{2}\lambda_{1}+\lambda_{2}, a_{1}^{\epsi_{2}}a_{2},
d_{1}d_{2}a_{1}^{\frac{\epsi_{2}-1}{2}-\epsi_{2}\lambda_{2}}),
\end{equation}
and $\bar{\si}(\epsi_{1}, \lambda_{1}, a_{1},
d_{1})=\bar{\si}(\epsi_{2}, \lambda_{2}, a_{2}, d_{2})$ if and
only if $\epsi_{1}=\epsi_{2}, \lambda_{1}=\lambda_{2},
a_{1}=a_{2}, d_{1}= d_{2}$.
Let
$$\bar{\pi}_{\epsi}=\bar{\si}(\epsi, 0, 1, 1),\quad
\bar{\si}_{\lambda}=(1, \lambda, 1, 1),\quad \bar{\si}_{a, d}=(1, 0,
a, d)$$ and
$$\mathfrak{a}=\{\bar{\pi}_{\epsi} \;|\; \epsi=\pm1 \},\quad
\mathfrak{t}=\{\bar{\si}_{\lambda} \;|\; \lambda\in\Z \},\quad
\mathfrak{b}=\{\bar{\si}_{a,d} \; |\; a,d\in\C^{*} \}. $$ By
(\ref{5.30}), we have the following relations:
$$\bar{\si}(\epsi, \lambda, a, d)=\bar{\si}(\epsi, 0,
1,1)\bar{\si}(1, \lambda, 1, 1)\bar{\si}(1, 0, a, d)\in
\mathfrak{a}\mathfrak{t}\mathfrak{b}  ,$$
$$\bar{\si}(\epsi, \lambda, a, d)^{-1}=\bar{\si}(\epsi, -\epsi\lambda, a^{-\epsi},
d^{-1}a^{\frac{1-\epsi}{2}-\lambda})  ,$$
$$\bar{\pi}_{\epsi_{1}}\bar{\pi}_{\epsi_{2}}=\bar{\pi}_{\epsi_{1}\epsi_{2}},\quad
 \bar{\si}_{\lambda_{1}}\bar{\si}_{\lambda_{2}}=\bar{\si}_{\lambda_{1}+\lambda_{2}},\quad
\bar{\si}_{a_{1},d_{1}}\bar{\si}_{a_{2},d_{2}}=\bar{\si}_{a_{1}a_{2},d_{1}d_{2}},$$
$$\bar{\pi}_{\epsi}^{-1}\bar{\si}_{\lambda}\bar{\pi}_{\epsi}=\bar{\si}_{\epsi\lambda}, \quad\;\bar{\pi}_{\epsi}^{-1}\bar{\si}_{a,d}\bar{\pi}_{\epsi}=\bar{\si}_{a^{\epsi},da^{\frac{\epsi-1}{2}}},\quad
\bar{\si}_{\lambda}^{-1}\bar{\si}_{a,d}\bar{\si}_{\lambda}=\bar{\si}_{a,da^{-\lambda}}.$$
Hence,  the following lemma holds.
\begin{lem}\label{L5.4}
$\mathfrak{a},\mathfrak{t}$ and $ \mathfrak{b} $ are all subgroups
of $Aut({\tsv})$ and
$$Aut(\tsv)={\mathcal{I}}\rtimes((\mathfrak{a}\ltimes\mathfrak{t})\ltimes\mathfrak{b}),$$
where $\mathfrak{a}\cong\Z_{2}=\{\pm 1\}, \mathfrak{t}\cong\Z$,
$\mathfrak{b}\cong\C^{*}\times\C^{*}.$ \qed
\end{lem}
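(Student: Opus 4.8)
The plan is to assemble the whole group out of the normal form already secured in Lemma \ref{L5.3}: every $\si\in Aut(\tsv)$ factors as $\si=\tau\,\bar{\si}(\epsi,\lambda,a,d)$ with $\tau\in\mathcal{I}$, and conversely every $\bar{\si}(\epsi,\lambda,a,d)$ is an automorphism. Writing $\mathcal{G}$ for the set of all $\bar{\si}(\epsi,\lambda,a,d)$, the composition law (\ref{5.30}) shows $\mathcal{G}$ is closed under products and inverses, while the rigidity statement that $\bar{\si}(\epsi_1,\lambda_1,a_1,d_1)=\bar{\si}(\epsi_2,\lambda_2,a_2,d_2)$ forces equality of all four parameters shows $(\epsi,\lambda,a,d)$ are honest coordinates on $\mathcal{G}$. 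First I would read off the three one-parameter families: $\bar{\pi}_{\epsi_1}\bar{\pi}_{\epsi_2}=\bar{\pi}_{\epsi_1\epsi_2}$ gives $\mathfrak{a}\cong\Z_2$, $\bar{\si}_{\lambda_1}\bar{\si}_{\lambda_2}=\bar{\si}_{\lambda_1+\lambda_2}$ gives $\mathfrak{t}\cong\Z$, and $\bar{\si}_{a_1,d_1}\bar{\si}_{a_2,d_2}=\bar{\si}_{a_1a_2,d_1d_2}$ gives $\mathfrak{b}\cong\C^{*}\times\C^{*}$, so each of $\mathfrak{a},\mathfrak{t},\mathfrak{b}$ is a subgroup.

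Next I would assemble $\mathcal{G}$ as an iterated semidirect product. The factorization $\bar{\si}(\epsi,\lambda,a,d)=\bar{\pi}_\epsi\bar{\si}_\lambda\bar{\si}_{a,d}$ shows $\mathcal{G}=\mathfrak{a}\mathfrak{t}\mathfrak{b}$, and uniqueness of coordinates shows the relevant partial products overlap trivially, so the decompositions below are genuine. The relation $\bar{\pi}_\epsi^{-1}\bar{\si}_\lambda\bar{\pi}_\epsi=\bar{\si}_{\epsi\lambda}$ exhibits $\mathfrak{t}$ as normal in $\mathfrak{a}\ltimes\mathfrak{t}$, while $\bar{\pi}_\epsi^{-1}\bar{\si}_{a,d}\bar{\pi}_\epsi=\bar{\si}_{a^{\epsi},da^{\frac{\epsi-1}{2}}}$ together with $\bar{\si}_\lambda^{-1}\bar{\si}_{a,d}\bar{\si}_\lambda=\bar{\si}_{a,da^{-\lambda}}$ exhibits $\mathfrak{b}$ as normal in $(\mathfrak{a}\ltimes\mathfrak{t})\ltimes\mathfrak{b}$. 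Hence $\mathcal{G}=(\mathfrak{a}\ltimes\mathfrak{t})\ltimes\mathfrak{b}$.

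Then I would bring in $\mathcal{I}$. Normality of $\mathcal{I}$ in $Aut(\tsv)$ follows from the identity $\si\exp({\rm ad}\,x)\si^{-1}=\exp({\rm ad}\,\si(x))$: for $x=kM_m+lY_{n+\frac12}$ Lemma \ref{L5.2} gives $\si(x)\in M+Y$, and since $M+Y$ is a Heisenberg-type subalgebra (here $[M+Y,M+Y]\subseteq M$ is central and ${\rm ad}$ is locally nilpotent), the Baker--Campbell--Hausdorff collapse shows every $\exp({\rm ad}\,z)$ with $z\in M+Y$ already lies in $\mathcal{I}$; thus $\exp({\rm ad}\,\si(x))\in\mathcal{I}$ and $\mathcal{I}\trianglelefteq Aut(\tsv)$. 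That $Aut(\tsv)=\mathcal{I}\,\mathcal{G}$ is immediate from the first sentence of Lemma \ref{L5.3}.

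The hard part will be $\mathcal{I}\cap\mathcal{G}=\{1\}$, which is exactly what upgrades the product to a semidirect product. For this I would compute how an inner automorphism acts on leading components. Since ${\rm ad}\,M_m$ sends $L,N$ into $M$ and kills $M,Y$, while ${\rm ad}\,Y_{p+\frac12}$ sends $L,N$ into $Y$ and $Y$ into $M$, any $\phi\in\mathcal{I}$ satisfies $\phi(L_n)=L_n+(\text{terms in }M{+}Y)$ and $\phi(N_n)=N_n+(\text{terms in }M{+}Y)$ with no $L$- or $N$-component in the correction, $\phi(Y_{n+\frac12})=Y_{n+\frac12}+(\text{terms in }M)$, and $\phi(M_n)=M_n$. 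Matching this against $\bar{\si}(\epsi,\lambda,a,d)(L_n)=a^n\epsi L_{\epsi n}+a^n\lambda N_{\epsi n}$: the $L$-component forces $\epsi=1$ and $a=1$, the absence of any $N$-component in $\phi(L_n)$ then forces $\lambda=0$, and the $Y$-component of $\phi(Y_{n+\frac12})$ forces $d=1$. Hence $\bar{\si}(\epsi,\lambda,a,d)=\id$, giving $\mathcal{I}\cap\mathcal{G}=\{1\}$. Combining normality of $\mathcal{I}$, the factorization $Aut(\tsv)=\mathcal{I}\,\mathcal{G}$, and this trivial intersection yields $Aut(\tsv)=\mathcal{I}\rtimes\big((\mathfrak{a}\ltimes\mathfrak{t})\ltimes\mathfrak{b}\big)$.
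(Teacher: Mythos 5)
Your proposal is correct and follows essentially the same route as the paper: the paper establishes exactly the composition law (\ref{5.30}), the factorization $\bar{\si}(\epsi,\lambda,a,d)=\bar{\pi}_{\epsi}\bar{\si}_{\lambda}\bar{\si}_{a,d}$, and the conjugation relations among $\mathfrak{a},\mathfrak{t},\mathfrak{b}$, and then concludes the lemma from these together with Lemmas \ref{L5.2} and \ref{L5.3}. The only difference is that you explicitly verify the normality of ${\mathcal{I}}$ in $Aut(\tsv)$ and the triviality of the intersection of ${\mathcal{I}}$ with $\mathfrak{a}\mathfrak{t}\mathfrak{b}$, steps the paper leaves implicit; both of your arguments for these are sound.
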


Let $\C^{\infty}=\{(a_{i})_{i\in\Z}\;|\; a_{i}\in\C, {\rm{\; all
\;  but\;  \; a \;  finite \; of \;  the}} \; a_{i} \; {\rm{ \;
are \; zero}} \ \}$, ${\mathcal{I_{C}}}$ a subgroup of
${\mathcal{I}}$ generated by $\{\exp(k {\rm{ad}} M_{n})\;|\;
n\in\Z, k\in\C\}$ and
${\mathcal{\overline{I}}}={\mathcal{I}}/{\mathcal{I_{C}}}$ the
quotient group  of ${\mathcal{I}}$. Then $\C^{\infty}$ is an
abelian group and ${\mathcal{I_{C}}}$ is an abelian normal
subgroup of ${\mathcal{I}}$. As a matter of fact,
${\mathcal{I_{C}}}$ is the center of the group ${\mathcal{I}}$.

\bigskip

Note $({\rm{ad}} M_{i})^{2}=({\rm{ad}}
Y_{j+\frac{1}{2}})^{3}={\rm{ad}} M_{i}{\rm{ad}}
Y_{j+\frac{1}{2}}={\rm{ad}} Y_{j+\frac{1}{2}}{\rm{ad}} M_{i}=0$
for all $i,j\in\Z$,  then
$$exp(\a {\rm{ad}} M_{i})=1+\a {\rm{ad}} M_{i},$$
$$exp(\b {\rm{ad}} Y_{j+\frac{1}{2}})=1+\b {\rm{ad}}
Y_{j+\frac{1}{2}}+\frac{1}{2}\b^{2}({\rm{ad}}
Y_{j+\frac{1}{2}})^{2},$$
$$
exp(\a {\rm{ad}} M_{i})exp(\b {\rm{ad}} Y_{j+\frac{1}{2}})=exp(\b
{\rm{ad}} Y_{j+\frac{1}{2}})+ \a {\rm{ad}} M_{i},$$ for all $\a,
\b\in\C$. Furthermore, we get
\begin{eqnarray*}
&&exp(b_{m_{1}}{\rm{ad}}
Y_{m_{1}+\frac{1}{2}})exp(b_{m_{2}}{\rm{ad}}
Y_{m_{2}+\frac{1}{2}})\cdots exp(b_{m_{t}}{\rm{ad}}
Y_{m_{t}+\frac{1}{2}})
\\
&=&1+\sum_{k=1}^{t}b_{m_{k}}{\rm{ad}}
Y_{m_{k}+\frac{1}{2}}+\sum_{k=1}^{t}\frac{b_{m_{k}}^{2}}{2}({\rm{ad}}
Y_{m_{k}+\frac{1}{2}})^{2} +\sum_{1\leq i<j\leq
t}b_{m_{i}}b_{m_{j}}{\rm{ad}} Y_{m_{i}+\frac{1}{2}}{\rm{ad}}
Y_{m_{j}+\frac{1}{2}}
\\
&=&exp(\sum_{k=1}^{t}b_{m_{k}}{\rm{ad}}
Y_{m_{k}+\frac{1}{2}})+\frac{1}{2}\sum_{1\leq i<j\leq
t}b_{m_{i}}b_{m_{j}}({\rm{ad}} Y_{m_{i}+\frac{1}{2}}{\rm{ad}}
Y_{m_{j}+\frac{1}{2}} -{\rm{ad}} Y_{m_{j}+\frac{1}{2}}{\rm{ad}}
Y_{m_{i}+\frac{1}{2}})
\\
&=&exp(\sum_{k=1}^{t}b_{m_{k}}{\rm{ad}}
Y_{m_{k}+\frac{1}{2}})+\sum_{1\leq i<j\leq
t}\frac{m_{i}-m_{j}}{2}b_{m_{i}}b_{m_{j}}{\rm{ad}}
M_{m_{i}+m_{j}+1}
\\
&=&exp(\sum_{k=1}^{t}b_{m_{k}}{\rm{ad}} Y_{m_{k}+\frac{1}{2}})
exp(\sum_{1\leq i<j\leq
t}\frac{m_{i}-m_{j}}{2}b_{m_{i}}b_{m_{j}}{\rm{ad}}
M_{m_{i}+m_{j}+1}),
\end{eqnarray*}
 for all $m_{k}\in\Z,  b_{m_{k}}\in\C,   1\leq k \leq t$. Therefore,
\begin{equation}\label{5.32}
exp(b_{m_{1}}{\rm{ad}}
Y_{m_{1}+\frac{1}{2}})exp(b_{m_{2}}{\rm{ad}}
Y_{m_{2}+\frac{1}{2}})\cdots exp(b_{m_{t}}{\rm{ad}}
Y_{m_{t}+\frac{1}{2}}){\mathcal{I_{C}}}=exp(\sum_{k=1}^{t}b_{m_{k}}{\rm{ad}}
Y_{m_{k}+\frac{1}{2}}){\mathcal{I_{C}}}.
\end{equation}

\begin{lem}\label{L5.5}
 ${\mathcal{I_{C}}}$ and  ${\mathcal{\overline{I}}}$ are
isomorphic to $\C^{\infty}$.
\end{lem}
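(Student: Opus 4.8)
The plan is to write down explicit isomorphisms in both cases, relying on the two nilpotency facts $({\rm{ad}} M_i)^2=0$, $({\rm{ad}} Y_{j+\frac{1}{2}})^3=0$ and the product formula \refequa{5.32}, all of which are already available. In both cases the target abelian group is $\C^{\infty}$ written additively, and the key point is that each inner automorphism in sight acts as $1$ plus a nilpotent operator whose combinatorics I can read off from the bracket relations.

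For ${\mathcal{I_C}}$, the first observation I would make is that ${\rm{ad}} M_i\,{\rm{ad}} M_j=0$ for all $i,j$: indeed the image of ${\rm{ad}} M_j$ lies in $M=span\{M_k\;|\;k\in\Z\}$, and $M$ is annihilated by every ${\rm{ad}} M_i$ since $[M_i,M_k]=0=[M_i,Y_{k+\frac{1}{2}}]$. Hence $\exp(k\,{\rm{ad}} M_i)=1+k\,{\rm{ad}} M_i$ and, because all cross terms vanish, the generators multiply additively, so every element of ${\mathcal{I_C}}$ has the form $1+\sum_i a_i\,{\rm{ad}} M_i$ with $(a_i)\in\C^{\infty}$. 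The assignment $(a_i)\mapsto 1+\sum_i a_i\,{\rm{ad}} M_i$ is then a surjective homomorphism from the additive group $\C^{\infty}$ onto ${\mathcal{I_C}}$. For injectivity I would evaluate on $N_0$: since ${\rm{ad}} M_i(N_0)=[M_i,N_0]=-2M_i$, the operator $1+\sum_i a_i\,{\rm{ad}} M_i$ sends $N_0\mapsto N_0-2\sum_i a_i M_i$, which reduces to the identity only when all $a_i=0$. This gives ${\mathcal{I_C}}\cong\C^{\infty}$.

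For ${\mathcal{\overline{I}}}$ I would define $\phi:\C^{\infty}\to{\mathcal{\overline{I}}}$ by $\phi((b_j))=\exp(\sum_j b_j\,{\rm{ad}} Y_{j+\frac{1}{2}}){\mathcal{I_C}}$. That $\phi$ is a homomorphism follows from \refequa{5.32}: modulo ${\mathcal{I_C}}$ the exponential of a sum equals the product of the single-index exponentials, so a product of two such cosets is again the coset of a single exponential, and since equal indices contribute no $M$-correction (the $(m_i-m_j)$ factor vanishes) the exponents add coordinatewise. For surjectivity I would use that each generator splits as $\exp(k\,{\rm{ad}} M_m+l\,{\rm{ad}} Y_{n+\frac{1}{2}})=\exp(k\,{\rm{ad}} M_m)\exp(l\,{\rm{ad}} Y_{n+\frac{1}{2}})$ (the two ad-operators multiply to $0$), and that the $M$-exponentials commute past the $Y$-exponentials exactly; thus, modulo the central subgroup ${\mathcal{I_C}}$, every element of ${\mathcal{I}}$ reduces to a product of $Y$-exponentials, hence to a single $\exp(\sum_j b_j\,{\rm{ad}} Y_{j+\frac{1}{2}})$ by \refequa{5.32}, so $\phi$ is onto.

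The hard part will be the injectivity of $\phi$, i.e.\ showing that $\exp(\sum_j b_j\,{\rm{ad}} Y_{j+\frac{1}{2}})\in{\mathcal{I_C}}$ forces all $b_j=0$. Here I would again evaluate on $N_0$. Using ${\rm{ad}} Y_{j+\frac{1}{2}}(N_0)=-Y_{j+\frac{1}{2}}$ together with $({\rm{ad}} Y_{j+\frac{1}{2}})^2(N_0)\in M$, one computes
$$\exp(\sum_j b_j\,{\rm{ad}} Y_{j+\frac{1}{2}})(N_0)=N_0-\sum_j b_j Y_{j+\frac{1}{2}}+(\text{a term in } M).$$
On the other hand, by the first part every element of ${\mathcal{I_C}}$ moves $N_0$ only inside $M$. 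Comparing the $Y$-components forces $b_j=0$ for all $j$, so $\phi$ is injective and ${\mathcal{\overline{I}}}\cong\C^{\infty}$. Everything except this final comparison of $Y$-components is routine bookkeeping with the already-established relations, so I expect it to be the only genuinely delicate step.
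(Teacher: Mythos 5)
Your proposal is correct and takes essentially the same route as the paper: the paper defines the same map on ${\mathcal{I_{C}}}$ (sending a product of exponentials to its tuple of exponents) and handles ${\mathcal{\overline{I}}}$ by appealing to (\ref{5.32}), asserting that the isomorphism properties are easy to check. You merely supply the verifications the paper leaves implicit, notably the additivity of exponents and the injectivity checks obtained by evaluating on $N_{0}$ and comparing the $M$- and $Y$-components.
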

\begin{proof}
Define $f: {\mathcal{I_{C}}}\longrightarrow \C^{\infty}$ by
$$f(\prod\limits_{i=1}^{s}exp(\a_{k_{i}}{\rm{ad}} M_{k_{i}}))=(a_{p})_{p\in\Z},$$
 where $a_{k_{i}}=\a_{k_{i}}$ for $1\leq i\leq
s, $  and the others are zero, $k_{i}\in\Z$ and $k_{1}<
k_{2}<\cdots< k_{s}$. Since every element of ${\mathcal{I_{C}}}$
has the unique form of
$\prod\limits_{i=1}^{s}exp(\a_{k_{i}}{\rm{ad}} M_{k_{i}})$, it is
easy to check that $f$ is an isomorphism of group.

Similar to the proof above, we have ${\mathcal{\overline{I}}}
\cong \C^{\infty}$ via (\ref{5.32}).
\end{proof}

\begin{theorem}\label{T5.6}
$Aut(\tsv)=({\mathcal{I_{C}}}\rtimes{\mathcal{\overline{I}}})\rtimes
((\mathfrak{a}\ltimes\mathfrak{t})\ltimes\mathfrak{b})\cong
(\C^{\infty}\rtimes \C^{\infty} )\rtimes((\Z_{2}\ltimes\Z)\ltimes
(\C^{*}\times \C^{*}))$.

\qed
\end{theorem}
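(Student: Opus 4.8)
The plan is to assemble the two structural lemmas already proved, Lemma \ref{L5.4} and Lemma \ref{L5.5}, together with the description of the inner group $\mathcal{I}$ via its center; essentially no new computation is required beyond keeping track of the nested factors. Lemma \ref{L5.4} already delivers $Aut(\tsv)=\mathcal{I}\rtimes\big((\mathfrak{a}\ltimes\mathfrak{t})\ltimes\mathfrak{b}\big)$, with $\mathfrak{a}\cong\Z_{2}$, $\mathfrak{t}\cong\Z$ and $\mathfrak{b}\cong\C^{*}\times\C^{*}$. Since $\mathcal{I}$ is normal in $Aut(\tsv)$ and $(\mathfrak{a}\ltimes\mathfrak{t})\ltimes\mathfrak{b}$ is a complement to it, it remains only to resolve $\mathcal{I}$ internally and substitute.

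For the internal structure of $\mathcal{I}$ I would use the nilpotence relations $(\mathrm{ad}\,M_{i})^{2}=(\mathrm{ad}\,Y_{j+\frac{1}{2}})^{3}=\mathrm{ad}\,M_{i}\,\mathrm{ad}\,Y_{j+\frac{1}{2}}=\mathrm{ad}\,Y_{j+\frac{1}{2}}\,\mathrm{ad}\,M_{i}=0$, which show that every element of $\mathcal{I}$ is a finite product of the generators $\exp(\a\,\mathrm{ad}\,M_{i})$ and $\exp(\b\,\mathrm{ad}\,Y_{j+\frac{1}{2}})$, and that $\mathcal{I_C}$ is precisely its center. The relation (\ref{5.32}) then says that, modulo $\mathcal{I_C}$, any product of the $\exp(\b\,\mathrm{ad}\,Y_{j+\frac{1}{2}})$ collapses to $\exp\big(\sum\b\,\mathrm{ad}\,Y_{j+\frac{1}{2}}\big)$; this is exactly the short exact sequence $1\to\mathcal{I_C}\to\mathcal{I}\to\mathcal{\overline{I}}\to 1$ recorded before Lemma \ref{L5.5}, and it is the content of the factor $\mathcal{I_C}\rtimes\mathcal{\overline{I}}$. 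Feeding in $\mathcal{I_C}\cong\C^{\infty}$ and $\mathcal{\overline{I}}\cong\C^{\infty}$ from Lemma \ref{L5.5}, together with the identifications of $\mathfrak{a},\mathfrak{t},\mathfrak{b}$ above, yields the asserted isomorphism $Aut(\tsv)\cong(\C^{\infty}\rtimes\C^{\infty})\rtimes\big((\Z_{2}\ltimes\Z)\ltimes(\C^{*}\times\C^{*})\big)$.

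The point I would handle most carefully is the meaning of the first factor $\mathcal{I_C}\rtimes\mathcal{\overline{I}}$. Because $\mathcal{I_C}$ is the center of $\mathcal{I}$ while the commutator of two generators $\exp(\b_{1}\mathrm{ad}\,Y_{i+\frac{1}{2}})$ and $\exp(\b_{2}\mathrm{ad}\,Y_{j+\frac{1}{2}})$ produces a nonzero $\mathrm{ad}\,M_{i+j+1}$ term, the group $\mathcal{I}$ is a genuinely non-abelian two-step nilpotent group; the symbol $\rtimes$ here therefore encodes the central extension determined by the commutator data in (\ref{5.32}) rather than a literal internal complement, and I would state this explicitly so that the meaning of the decomposition is unambiguous. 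With that understood, the product rule (\ref{5.30}) and the conjugation relations listed just before Lemma \ref{L5.4} confirm that the outer factors act on $\mathcal{I}$ as required, completing the identification.
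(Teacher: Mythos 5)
Your proposal is correct and follows essentially the same route as the paper, which deduces Theorem \ref{T5.6} directly from Lemma \ref{L5.4} and Lemma \ref{L5.5} together with the decomposition of $\mathcal{I}$ recorded in (\ref{5.32}). Your added caveat that $\mathcal{I_C}\rtimes\mathcal{\overline{I}}$ must be read as the central extension determined by the commutator data (since $\mathcal{I_C}$ is central yet $\mathcal{I}$ is non-abelian, a literal semidirect product would force $\mathcal{I}$ to be abelian) is a legitimate and worthwhile clarification of the paper's notation.
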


\begin{lem}\label{L5.1}{\rm{(cf. \cite{P})}}
Let  $\mathfrak{g}$  be a perfect Lie algebra and let
$\widehat{\mathfrak{g}}$ be its universal covering algebra of
$\mathfrak{g}$. Then every automorphism $\si$ of $\mathfrak{g}$
admits a unique extension $\widetilde{\si}$  to an automorphism of
$\widehat{\mathfrak{g}}$. Furthermore, the map $\si\mapsto
\widetilde{\si}$ is a group monomorphism.
\end{lem}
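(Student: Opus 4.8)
The plan is to exploit the universal property of $(\widehat{\mathfrak{g}},\pi)$ directly, viewing $\widehat{\mathfrak{g}}$ as a central extension of $\mathfrak{g}$ along a \emph{twisted} projection. First I would observe that for any $\si\in \mathrm{Aut}(\mathfrak{g})$ the composite $\si^{-1}\pi\colon\widehat{\mathfrak{g}}\to\mathfrak{g}$ is again a covering of $\mathfrak{g}$: it is surjective because $\si^{-1}$ is bijective and $\pi$ is surjective, its kernel equals $\ker\pi$ (since $\si^{-1}$ is injective) and hence lies in the center of $\widehat{\mathfrak{g}}$, and $\widehat{\mathfrak{g}}$ is perfect. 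Applying the defining universal property of $(\widehat{\mathfrak{g}},\pi)$ to the central extension $(\widehat{\mathfrak{g}},\si^{-1}\pi)$ produces a unique Lie algebra homomorphism $\widetilde{\si}\colon\widehat{\mathfrak{g}}\to\widehat{\mathfrak{g}}$ with $\si^{-1}\pi\,\widetilde{\si}=\pi$, that is,
\begin{equation}\label{lift-eq}
\pi\,\widetilde{\si}=\si\,\pi .
\end{equation}
Equation \eqref{lift-eq} says precisely that $\widetilde{\si}$ covers $\si$, and the uniqueness clause of the universal property shows that $\widetilde{\si}$ is the \emph{only} endomorphism of $\widehat{\mathfrak{g}}$ satisfying \eqref{lift-eq}.

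The next step is to upgrade $\widetilde{\si}$ from an endomorphism to an automorphism. I would run the same construction for $\si^{-1}$, obtaining the unique endomorphism $\widetilde{\si^{-1}}$ with $\pi\,\widetilde{\si^{-1}}=\si^{-1}\pi$. Then $\pi(\widetilde{\si}\,\widetilde{\si^{-1}})=\si\pi\,\widetilde{\si^{-1}}=\si\si^{-1}\pi=\pi$, so $\widetilde{\si}\,\widetilde{\si^{-1}}$ covers $\mathrm{id}_{\mathfrak{g}}$; since $\mathrm{id}_{\widehat{\mathfrak{g}}}$ does as well, the uniqueness of lifts (now applied to the central extension $(\widehat{\mathfrak{g}},\pi)$ itself) forces $\widetilde{\si}\,\widetilde{\si^{-1}}=\mathrm{id}_{\widehat{\mathfrak{g}}}$, and symmetrically $\widetilde{\si^{-1}}\,\widetilde{\si}=\mathrm{id}_{\widehat{\mathfrak{g}}}$. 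Hence $\widetilde{\si}\in\mathrm{Aut}(\widehat{\mathfrak{g}})$ with inverse $\widetilde{\si^{-1}}$, and by \eqref{lift-eq} this extension is unique.

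Finally I would verify that $\si\mapsto\widetilde{\si}$ is a group monomorphism. For $\si_1,\si_2\in\mathrm{Aut}(\mathfrak{g})$ the composite satisfies $\pi(\widetilde{\si_1}\,\widetilde{\si_2})=\si_1\pi\,\widetilde{\si_2}=\si_1\si_2\,\pi$, so $\widetilde{\si_1}\,\widetilde{\si_2}$ covers $\si_1\si_2$; by the uniqueness of the covering endomorphism it equals $\widetilde{\si_1\si_2}$, giving multiplicativity. Injectivity is immediate: if $\widetilde{\si}=\mathrm{id}_{\widehat{\mathfrak{g}}}$, then \eqref{lift-eq} yields $\si\pi=\pi$, and surjectivity of $\pi$ forces $\si=\mathrm{id}_{\mathfrak{g}}$.

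The only genuinely delicate point is the repeated reliance on uniqueness of lifts: invertibility, multiplicativity, and uniqueness of the extension are all deduced not by direct computation but by exhibiting two homomorphisms covering the same map and invoking the uniqueness built into the universal property. Keeping the composition orders straight—so that the twisting factor $\si^{-1}$ versus $\si$ lands on the correct side in \eqref{lift-eq}—is where care is required; no perfectness input beyond that already guaranteed by ``$\widehat{\mathfrak{g}}$ is a covering'' is needed.
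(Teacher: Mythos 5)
Your argument is correct, but it proceeds along a different route than the paper. The paper does not prove Lemma 5.1 at all in the abstract language you use: it cites Pianzola and then works with the explicit model $\widetilde{\mathfrak{g}}=\mathfrak{g}\oplus V$ with $V=\Lambda^{2}\mathfrak{g}/J$, where the lift of $\theta$ is defined concretely by $\theta_{V}(x\vee y)=\theta(x)\vee\theta(y)$ and restricted to $\widehat{\mathfrak{g}}=[\widetilde{\mathfrak{g}},\widetilde{\mathfrak{g}}]$. You instead derive existence, uniqueness, invertibility, and multiplicativity purely from the universal property, by twisting the projection to $\si^{-1}\pi$ and repeatedly playing two lifts of the same map against the uniqueness clause; this is clean, self-contained given the paper's definition of universal central extension (which already builds in uniqueness of the comparison homomorphism), and it actually establishes the uniqueness of the extension and the monomorphism property, which the explicit construction by itself does not immediately address. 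What the paper's concrete approach buys, and what your abstract argument does not supply, is an explicit formula for $\widetilde{\si}$ on the center: the subsequent Lemma 5.10 needs to know how $\tilde{\theta}$ acts on $C_{L}$, $C_{LN}$, $C_{N}$, and that computation requires the $x\vee y$ model. So your proof is a valid and arguably tidier verification of the lemma as stated, while the paper's presentation is geared toward the explicit computations that follow.
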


We will use  Lemma  \ref{L5.1} to  obtain all the automorphisms of
 ${\hsv}$ from those of
 ${\tsv}$.

 For a perfect
Lie algebra $\mathfrak{g}$, its universal covering algebra is
constucted as follows in \cite{P}. Let
$V=\Lambda^{2}\mathfrak{g}/J$, where
$$J=span\{x\wedge[y, z]+y\wedge[z, x]+z\wedge[x, y] \ | \ x,y,z\in\mathfrak{g}\}$$
is a subspace of $\Lambda^{2}\mathfrak{g}$. Then there is a natural
Lie algebra structure in the space
$\widetilde{\mathfrak{g}}=\mathfrak{g}\bigoplus V$ with the
following bracket
$$[x+u, y+v]=[x, y]+x\vee y,$$
for all $x, y\in\mathfrak{g}, u,v\in V$, where $x\vee y$ is the
image of $x\wedge y$ in $V$ under the canonical morphism
$\Lambda^{2}\mathfrak{g}\longrightarrow V$. Then the derived
algebra
$\widehat{\mathfrak{g}}=[\widetilde{\mathfrak{g}},\widetilde{\mathfrak{g}}]$
of $\widetilde{\mathfrak{g}}$ is the universal central extension of
$\mathfrak{g}$. In face, given $x\in\mathfrak{g}$ there exists $c\in
V$ such that $x+c\in\widehat{\mathfrak{g}}$.  Then the canonical
map $\widehat{\mathfrak{g}}\longrightarrow\mathfrak{g}$ is  onto
with kernel $\frak{c}\subset V$ and the resulting central
extension
$$\{0\}\longrightarrow\frak{c}\longrightarrow\widehat{\mathfrak{g}}\longrightarrow\mathfrak{g}\longrightarrow \{0\}$$
of $\mathfrak{g}$ is universal in the sense that there exists a
unique morphism from it into any other given central extension of
$\mathfrak{g}$.

 For any  $\theta\in Aut(\mathfrak{g})$, $\theta$ induces an automorphism $\theta_{V}$
 of $V$ via $$\theta_{V}(x\vee y)=\theta(x)\vee\theta(y),$$ for all $x, y\in \mathfrak{g}$. Obviously, $\theta$
extends to an automorphism $\theta_{\frak{e}}$ of
$\widetilde{\mathfrak{g}}$ by
$$\theta_{\frak{e}}(x+v)=\theta(x)+\theta_{V}(v),$$ for all $x\in \mathfrak{g},
v\in V$. By restriction, $\theta_{\frak{e}}$ induces an
automorphism $\tilde{\theta}$ of $\widehat{\mathfrak{g}}$.

In the following section, we will describe the automorphism group
of the universal central extension of ${\tsv}$ using the above
method. Firstly, we have the following  lemmas.


\begin{lem}\label{L5.6}
In $V=\Lambda^{2}({\tsv})/J$,  we have the following relations for
all $m,n\in\Z$ :
\begin{eqnarray*}
& &L_{m}\vee L_{n}=\frac{n-m}{m+n}L_{0}\vee
L_{m+n},\quad\;m+n\neq0;
\\
& &L_{m}\vee L_{-m}=\frac{m^{3}-m}{6}L_{2}\vee L_{-2};
\\
& &L_{m}\vee N_{n}=\frac{n}{m+n}L_{0}\vee N_{m+n}, \quad m+n\neq
0;
\\
& &L_{m}\vee N_{-m}=\frac{m^{2}+m}{2}(L_{1}\vee N_{-1}+L_{-1}\vee
N_{1})-mL_{-1}\vee N_{1};
\\
& &L_{m}\vee M_{n}=\frac{n}{2}N_{0}\vee M_{m+n};\quad\; L_{m}\vee
Y_{n+\frac{1}{2}}=(n+\frac{1-m}{2})N_{0}\vee Y_{m+n+\frac{1}{2}};
\\
& &N_{m}\vee N_{n}=m\delta_{m+n,0}N_{1}\vee N_{-1};\; N_{m}\vee
M_{n}=N_{0}\vee M_{m+n};\; N_{m}\vee Y_{n+\frac{1}{2}}=N_{0}\vee
Y_{m+n+\frac{1}{2}};
\\
& &M_{m}\vee M_{n}=M_{m}\vee Y_{n+\frac{1}{2}}=0;\quad\;
Y_{m+\frac{1}{2}}\vee Y_{n+\frac{1}{2}}=\frac{m-n}{2}N_{0}\vee
M_{m+n+1}.
\end{eqnarray*}
\qed
\end{lem}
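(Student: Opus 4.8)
The plan is to exploit the fact that $V=\Lambda^{2}(\tsv)/J$ is, by construction, governed by a single family of relations together with the bilinearity and antisymmetry of $\vee$: since $J$ is spanned by the elements $x\wedge[y,z]+y\wedge[z,x]+z\wedge[x,y]$, these vanish in $V$, so
\begin{equation}
x\vee[y,z]+y\vee[z,x]+z\vee[x,y]=0,\qquad \forall\,x,y,z\in\tsv. \tag{$\ast$}
\end{equation}
Every identity in the lemma will be obtained by specializing $(\ast)$ to a suitable triple of basis vectors and reading off the brackets from Definition~\ref{D1.1}. I would first record the trivial facts $x\vee y=-y\vee x$ and $x\vee x=0$, which let me rewrite products such as $N_{n}\vee L_{m}$ or $Y\vee Y$ in the normalized order used in the statement.

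For the relations that reduce an arbitrary product to one of the chosen ``base'' vectors $L_{0}\vee(\cdot)$, $N_{0}\vee(\cdot)$ or $N_{1}\vee N_{-1}$, a single application of $(\ast)$ suffices, and I would organize them by the triple used. Feeding $(L_{0},L_{m},L_{n})$ and $(L_{0},L_{m},N_{n})$ into $(\ast)$ and using $[L_{0},X_{m}]=mX_{m}$ gives the generic relations $L_{m}\vee L_{n}=\frac{n-m}{m+n}L_{0}\vee L_{m+n}$ and $L_{m}\vee N_{n}=\frac{n}{m+n}L_{0}\vee N_{m+n}$ for $m+n\neq0$, while $(L_{0},N_{m},N_{n})$ kills $N_{m}\vee N_{n}$ for $m+n\neq0$. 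The relations carrying an $N_{0}$ all come from feeding $N_{0}$ into $(\ast)$ and exploiting $[N_{0},M_{k}]=2M_{k}$ and $[N_{0},Y_{k+\frac12}]=Y_{k+\frac12}$: the triples $(N_{0},L_{m},M_{n})$ and $(N_{0},L_{m},Y_{n+\frac12})$ give the $L\vee M$ and $L\vee Y$ formulas; $(N_{m},N_{0},M_{n})$ and $(N_{m},N_{0},Y_{n+\frac12})$ give $N_{m}\vee M_{n}=N_{0}\vee M_{m+n}$ and $N_{m}\vee Y_{n+\frac12}=N_{0}\vee Y_{m+n+\frac12}$; and $(N_{0},Y_{m+\frac12},Y_{n+\frac12})$ gives the last $Y\vee Y$ identity. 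The two vanishing relations $M_{m}\vee M_{n}=0$ and $M_{m}\vee Y_{n+\frac12}=0$ drop out of $(N_{0},M_{m},M_{n})$ and $(N_{0},M_{m},Y_{n+\frac12})$: the coefficient of the single surviving term is a nonzero scalar, forcing that term to be $0$.

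The genuine work lies in the three ``diagonal'' cases $N_{m}\vee N_{-m}$, $L_{m}\vee N_{-m}$ and $L_{m}\vee L_{-m}$, where $m+n=0$ and no index can be moved to $0$. Here I would apply $(\ast)$ to triples whose degrees sum to zero to extract a recursion in $m$. For $N\vee N$, the triple $(L_{-m-n},N_{m},N_{n})$ yields $m\,(N_{n}\vee N_{-n})=n\,(N_{m}\vee N_{-m})$, so setting $n=1$ gives $N_{m}\vee N_{-m}=m\,N_{1}\vee N_{-1}$. For $L\vee N$, the triples $(L_{p},L_{q},N_{r})$ with $p+q+r=0$ give, for $c(m):=L_{m}\vee N_{-m}$, the relation $(p+q)\bigl(c(p)-c(q)\bigr)+(q-p)\,c(p+q)=0$; taking $p=1,\,q=-1$ first shows $c(0)=L_{0}\vee N_{0}=0$, after which the recursion forces $c$ to be a quadratic $am^{2}+bm$ with no constant term, whose two free parameters are $c(1)=L_{1}\vee N_{-1}$ and $c(-1)=L_{-1}\vee N_{1}$; solving for $a,b$ produces the stated two-term formula. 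The $L\vee L$ diagonal is treated in the same way, now from $(L_{p},L_{q},L_{r})$ with $p+q+r=0$, and is precisely the classical determination of the universal central extension of the Witt algebra recalled earlier from \cite{BM,LJ}; the resulting recursion reduces $L_{m}\vee L_{-m}$ to a multiple of $L_{2}\vee L_{-2}$ of the form $\frac{m^{3}-m}{6}$.

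I expect the recursion-solving step for the two $L$-diagonals to be the main obstacle: one must check that the inhomogeneous relations produced by $(\ast)$ are mutually consistent, isolate the correct boundary values, and fix the normalization so that the cubic (respectively quadratic) solution matches at the generating indices. Once the diagonal terms are in hand, all the remaining entries of the lemma are mechanical substitutions into $(\ast)$ followed by the bracket table of Definition~\ref{D1.1}.
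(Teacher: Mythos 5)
The paper states this lemma with no proof at all, so your strategy---specializing the relation $x\vee[y,z]+y\vee[z,x]+z\vee[x,y]=0$ (which is exactly what dividing by $J$ buys) to triples of basis vectors and reading off the bracket table---is certainly the intended route. Your handling of all the off-diagonal relations, of $N_m\vee N_{-m}$, and of $L_m\vee N_{-m}$ (two free parameters $L_1\vee N_{-1}$, $L_{-1}\vee N_1$, quadratic solution $am^2+bm$) is correct and complete.

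There is, however, a genuine gap at the $L_m\vee L_{-m}$ step, and it cannot be closed the way you anticipate. Writing $c(m)=L_m\vee L_{-m}$, the only relations constraining these elements come from triples $(L_p,L_q,L_r)$ with $p+q+r=0$ (no other triple produces an $L\wedge L$ term), and the general solution of the resulting recursion is $c(m)=am+bm^3$: a \emph{two}-parameter family with free boundary values $c(1)$ and $c(2)$, explicitly
\begin{equation*}
L_m\vee L_{-m}=\frac{4m-m^{3}}{3}\,L_{1}\vee L_{-1}+\frac{m^{3}-m}{6}\,L_{2}\vee L_{-2}.
\end{equation*}
The one-term formula you assert (and which the lemma states) therefore requires $L_{1}\vee L_{-1}=0$ in $V$, and this is false: every 2-cocycle vanishes on $J$ and hence descends to a linear functional on $V$, and the coboundary $\psi_f$ with $f(L_0)=1$ gives $\psi_f(L_1,L_{-1})=f([L_1,L_{-1}])=-2\neq 0$. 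So there is no ``normalization to fix'': since $V$ surjects onto $\tsv$ with kernel $H_2(\tsv)$, its degree-zero piece is six-dimensional and $L_1\vee L_{-1}$, $L_2\vee L_{-2}$ are linearly independent there. (This is in fact an inaccuracy in the printed statement, which propagates into Lemma \ref{L5.7}, where one needs $L'_0=L_0-\frac{1}{2}L_1\vee L_{-1}$ rather than $L'_0=L_0$ for $[L'_1,L'_{-1}]=-2L'_0$ to hold.) Your write-up should either prove the corrected two-term identity or flag the discrepancy, rather than claiming the recursion collapses to a multiple of $L_2\vee L_{-2}$.
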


Using Lemma \ref{L5.6}, we have the following result.

\begin{lem}\label{L5.7} The universal central extension of
${\tsv}$,  denoted  by ${\hsv}$, has a basis
\\
$\{L'_{n}, M'_{n}, N'_{n}, Y'_{n+\frac{1}{2}}, C_{L}, C_{LN},
C_{N}\ |\ n\in\Z\}$
 with the
following products:
$$[L'_{m}, L'_{n}]=(n-m)L'_{m+n}+\delta_{m+n,0}\dis\frac{m^{3}-m}{12}C_{L},$$
$$[N'_{m}, N'_{n}]=n\delta_{m+n,0}C_{N},$$
$$[L'_{m}, N'_{n}]=nN'_{m+n}+\delta_{m+n,0}(n^{2}-n)C_{LN},  $$
$$[M'_{m}, M'_{n}]=0, \ \ [Y'_{m+\frac{1}{2}}, Y'_{n+\frac{1}{2}}]=(m-n)M'_{m+n+1},$$
$$[L'_{m}, M'_{n}]=nM'_{m+n},  \ \ [L'_{m},Y'_{n+\frac{1}{2}}]=(n+\dis\frac{1-m}{2})Y'_{m+n+\frac{1}{2}}, $$
$$[N'_{m}, M'_{n}]=2M'_{m+n},  \ \ [N'_{m},Y'_{n+\frac{1}{2}}]=Y'_{m+n+\frac{1}{2}}, \ \ [M'_{m},Y'_{n+\frac{1}{2}}]=0,$$
$$[{\hsv}, C_{L}]=[{\hsv}, C_{LN}]=[{\hsv}, C_{N}]=0,$$
where
$$L'_{0}=L_{0}, \quad\;  N'_{0}=N_{0}+L_{-1}\vee N_{1};$$
$$L'_{m}=L_{m}+\frac{1}{m}L_{0}\vee L_{m},\;\;N'_{m}=N_{m}+\frac{1}{m}L_{0}\vee N_{m},\;\;\; m\neq 0;$$
$$M'_{n}=M_{n}+\frac{1}{2}N_{0}\vee M_{n},\quad\;
Y'_{n+\frac{1}{2}}=Y_{n+\frac{1}{2}}+N_{0}\vee
Y_{n+\frac{1}{2}},\quad\; n\in\Z;$$
$$C_{L}=2L_{2}\vee L_{-2}, \;\;\; C_{LN}=\frac{1}{2}(L_{1}\vee N_{-1}+L_{-1}\vee
N_{1}), \;\;\;C_{N}=N_{-1}\vee N_{1}.$$
\end{lem}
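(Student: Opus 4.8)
The plan is to work directly with Pirashvili's model of the universal central extension recalled just above the statement: $\widetilde{\tsv}=\tsv\oplus V$ with $V=\Lambda^{2}(\tsv)/J$ and bracket $[x+u,y+v]=[x,y]+x\vee y$, and $\hsv=[\widetilde{\tsv},\widetilde{\tsv}]$. Every assertion will be read off from the reduction relations collected in Lemma \ref{L5.6}, which express an arbitrary product $x\vee y$ as a combination of the distinguished products $L_{0}\vee L_{m}$, $L_{0}\vee N_{m}$, $N_{0}\vee M_{m}$, $N_{0}\vee Y_{m+\frac12}$ together with the three elements $C_{L}=2L_{2}\vee L_{-2}$, $C_{LN}=\frac12(L_{1}\vee N_{-1}+L_{-1}\vee N_{1})$ and $C_{N}=N_{-1}\vee N_{1}$.

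First I would check that each proposed basis vector actually lies in $\hsv=[\widetilde{\tsv},\widetilde{\tsv}]$, by exhibiting it as a single bracket in $\widetilde{\tsv}$. A short computation with the defining bracket gives $[L_{0},L_{m}]=mL_{m}+L_{0}\vee L_{m}=mL'_{m}$ and $[L_{0},N_{m}]=mN_{m}+L_{0}\vee N_{m}=mN'_{m}$ for $m\neq0$, $[N_{0},M_{n}]=2M_{n}+N_{0}\vee M_{n}=2M'_{n}$, $[N_{0},Y_{n+\frac12}]=Y_{n+\frac12}+N_{0}\vee Y_{n+\frac12}=Y'_{n+\frac12}$, and $[L_{-1},N_{1}]=N_{0}+L_{-1}\vee N_{1}=N'_{0}$, while $L'_{0}=L_{0}=\frac12[L_{-1},L_{1}]$, the term $L_{-1}\vee L_{1}$ vanishing by the relation for $L_{m}\vee L_{-m}$. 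Hence all of $L'_{m},N'_{m},M'_{n},Y'_{n+\frac12},C_{L},C_{LN},C_{N}$ belong to $\hsv$; since $\hsv$ is spanned by such brackets and Lemma \ref{L5.6} reduces every $x\vee y$ to these vectors, they span $\hsv$. Their linear independence follows by applying the canonical projection $\hsv\to\tsv$, which sends the primed generators to the corresponding basis of $\tsv$ and kills $C_{L},C_{LN},C_{N}$; the latter three are independent in $V$ because the kernel of $\hsv\to\tsv$ has dimension $\dim H^{2}(\tsv,\C)=3$ by Theorem \ref{T3.1}.

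Next I would verify the products one family at a time using $[x+u,y+v]=[x,y]+x\vee y$. For the Virasoro part, $[L'_{m},L'_{n}]=(n-m)L_{m+n}+L_{m}\vee L_{n}$; when $m+n\neq0$ the relation $L_{m}\vee L_{n}=\frac{n-m}{m+n}L_{0}\vee L_{m+n}$ exactly supplies the correction turning $(n-m)L_{m+n}$ into $(n-m)L'_{m+n}$, leaving no central term, while for $n=-m$ one uses $L_{m}\vee L_{-m}=\frac{m^{3}-m}{6}L_{2}\vee L_{-2}=\frac{m^{3}-m}{12}C_{L}$. The mixed families $L'M'$, $L'Y'$, $N'M'$, $N'Y'$, $M'M'$, $M'Y'$ and $Y'Y'$ are handled identically: in each, the relevant relation from Lemma \ref{L5.6} converts the naive bracket into the stated product, and no central term survives, the cancellations $M_{m}\vee M_{n}=M_{m}\vee Y_{n+\frac12}=0$ and the identity $Y_{m+\frac12}\vee Y_{n+\frac12}=\frac{m-n}{2}N_{0}\vee M_{m+n+1}$ giving $[Y'_{m+\frac12},Y'_{n+\frac12}]=(m-n)M'_{m+n+1}$ with no charge.

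The delicate cases are the two carrying central charges beyond Virasoro. For $[N'_{m},N'_{n}]$ only $N_{m}\vee N_{n}=m\delta_{m+n,0}N_{1}\vee N_{-1}$ contributes, and since $N_{1}\vee N_{-1}=-C_{N}$ this yields $n\delta_{m+n,0}C_{N}$. The genuinely fiddly computation is $[L'_{m},N'_{n}]$: for $m+n\neq0$ the relation $L_{m}\vee N_{n}=\frac{n}{m+n}L_{0}\vee N_{m+n}$ absorbs the correction and leaves $nN'_{m+n}$, but for $n=-m$ the relation $L_{m}\vee N_{-m}=\frac{m^{2}+m}{2}(L_{1}\vee N_{-1}+L_{-1}\vee N_{1})-mL_{-1}\vee N_{1}$ must be combined with the identity $L_{-1}\vee N_{1}=N'_{0}-N_{0}$ coming from the definition of $N'_{0}$; this rewrites the stray $-mL_{-1}\vee N_{1}$ as $-mN'_{0}+mN_{0}$ and produces the central term $(m^{2}+m)C_{LN}=(n^{2}-n)C_{LN}$. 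I expect this last manipulation, namely correctly tracking the $L_{-1}\vee N_{1}$ term and recognizing that it shifts $N_{0}$ to $N'_{0}$, to be the main obstacle, since it is precisely where the cocycle normalization $n^{2}-n$ emerging from the $\vee$-construction interacts with the lift $N'_{0}=N_{0}+L_{-1}\vee N_{1}$; once it is settled, the remaining identities are routine bookkeeping with the antisymmetry of $\vee$.
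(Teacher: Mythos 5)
Your proposal is correct and is essentially the paper's own argument: the paper proves Lemma \ref{L5.7} simply by invoking the Pianzola construction $\widehat{\mathfrak{g}}=[\mathfrak{g}\oplus V,\mathfrak{g}\oplus V]$ together with the reduction relations of Lemma \ref{L5.6}, which is exactly what you do, and your detailed verifications (including the sign $N_{1}\vee N_{-1}=-C_{N}$ and the absorption of $-mL_{-1}\vee N_{1}$ into $N'_{0}$ producing the $(n^{2}-n)C_{LN}$ term) are all accurate.
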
\qed

\begin{lem}\label{L5.10}
For any  $\tilde{\theta}\in Aut({\hsv})/{\mathcal{I}}$, we have
\begin{equation}\label{e5.5.1}
\tilde{\theta}(L'_{n})=a^{n}\epsi L'_{\epsi n}+ a^{n}\lambda
N'_{\epsi
n}-\lambda\delta_{n,0}C_{LN}+\dis\frac{\epsi}{2}\lambda^{2}\delta_{n,0}C_{N},
\end{equation}
\begin{equation}\label{e5.5.2}
\tilde{\theta}(N'_{n})=a^{n}N'_{\epsi
n}+(\epsi-1)\delta_{n,0}C_{LN}+\epsi \lambda \delta_{n,0} C_{N},
\end{equation}
\begin{equation}\label{e5.5.3}
\tilde{\theta}(M'_{n})=\epsi d^{2}a^{n-1}M'_{\epsi(n-2\lambda)},
\end{equation}
\begin{equation}\label{e5.5.4}
\tilde{\theta}(Y'_{n+\frac{1}{2}})=d
a^{n}Y'_{\epsi(n+\frac{1}{2}-\lambda)},
\end{equation}
\begin{equation}\label{e5.5.5}
\tilde{\theta}(C_{L})={\epsi} C_{L},\;
\tilde{\theta}(C_{LN})={\epsi} C_{LN},\;
\tilde{\theta}(C_{N})={\epsi} C_{N},
\end{equation}
for all $n\in\Z$, where $\epsilon\in\{\pm1\}$, $\lambda\in\Z,$ $a,
d\in\C^{*}$. Conversely, if $\tilde{\theta}$ is a linear operator
on ${\hsv}$ satisfying (\ref{e5.5.1})-(\ref{e5.5.5}) for some
$\epsilon\in\{\pm1\}$, $\lambda\in\Z,$ $a, d\in\C^{*},$
 then $\tilde{\theta}\in
Aut({\hsv})$.

\qed
\end{lem}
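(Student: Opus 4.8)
The plan is to reduce everything to the unique-lifting theorem of Lemma~\ref{L5.1} together with the classification of automorphisms of $\tsv$ in Lemma~\ref{L5.3}. Since $\tsv$ is centerless (Lemma~\ref{T1.2}) and perfect with universal central extension $\hsv$, the center $Z(\hsv)=\mathfrak{c}:=\C C_{L}\oplus\C C_{LN}\oplus\C C_{N}$ is characteristic, so any $\tilde\theta\in Aut(\hsv)$ preserves $\mathfrak{c}$ and descends to an automorphism $\theta$ of $\hsv/\mathfrak{c}\cong\tsv$. By the uniqueness in Lemma~\ref{L5.1}, $\tilde\theta$ is then forced to be the unique lift of $\theta$. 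Working modulo the inner group $\mathcal{I}$ and invoking Lemma~\ref{L5.3}, it therefore suffices to compute the unique lift of a representative $\bar\si=\bar\si(\epsi,\lambda,a,d)$ satisfying (\ref{E5.3.1})--(\ref{E5.3.4}).

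I would start from the ansatz
$$\tilde\theta(L'_{n})=a^{n}\epsi L'_{\epsi n}+a^{n}\lambda N'_{\epsi n}+u_{n},\qquad \tilde\theta(N'_{n})=a^{n}N'_{\epsi n}+v_{n},$$
$$\tilde\theta(M'_{n})=\epsi d^{2}a^{n-1}M'_{\epsi(n-2\lambda)},\qquad \tilde\theta(Y'_{n+\frac12})=da^{n}Y'_{\epsi(n+\frac12-\lambda)},$$
where $u_{n},v_{n}\in\mathfrak{c}$ are unknown central corrections; the non-central parts are just the lifts of the $\tsv$-level formulas (\ref{E5.3.1})--(\ref{E5.3.4}) and each differs from $\tilde\theta$ by a central element, so these ansätze are exact. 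The central elements all sit in degree $0$, and the relation $[L'_{0},\cdot\,]$ realizes the $\tfrac12\Z$-grading; applying $\tilde\theta$ to $[L'_{0},L'_{n}]=nL'_{n}$ gives $nu_{n}=0$, whence $u_{n}=0$ for $n\neq0$, and likewise $v_{n}=0$ for $n\neq0$. A short check using $[L'_{0},M'_{n}]$ and $[N'_{0},M'_{0}]$ rules out any central correction for $M'$, and $Y'$ never lands in degree $0$, so (\ref{E5.3.3})--(\ref{E5.3.4}) lift verbatim to (\ref{e5.5.3})--(\ref{e5.5.4}). Only $u_{0},v_{0}$ and the action on $\mathfrak{c}$ remain.

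To pin these down I would feed the ansatz into the cocycle-carrying relations of Lemma~\ref{L5.7}. Applying $\tilde\theta$ to $[L'_{1},L'_{-1}]=-2L'_{0}$ (here the $C_{L}$-cocycle vanishes) and expanding $[\tilde\theta(L'_{1}),\tilde\theta(L'_{-1})]$, the cross terms $\epsi\lambda[L'_{\epsi},N'_{-\epsi}]$, $\epsi\lambda[N'_{\epsi},L'_{-\epsi}]$ and $\lambda^{2}[N'_{\epsi},N'_{-\epsi}]=-\epsi\lambda^{2}C_{N}$ combine to give $u_{0}=-\lambda C_{LN}+\frac{\epsi}{2}\lambda^{2}C_{N}$. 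Comparing the central parts in the general relations $[L'_{m},L'_{-m}]=-2mL'_{0}+\frac{m^{3}-m}{12}C_{L}$, $[L'_{m},N'_{-m}]=-mN'_{0}+(m^{2}+m)C_{LN}$ and $[N'_{m},N'_{-m}]=-mC_{N}$ (using $\epsi^{3}=\epsi$) yields $\tilde\theta(C_{L})=\epsi C_{L}$, $\tilde\theta(C_{LN})=\epsi C_{LN}$, $\tilde\theta(C_{N})=\epsi C_{N}$; then $[L'_{1},N'_{-1}]=-N'_{0}+2C_{LN}$ together with $\tilde\theta(C_{LN})=\epsi C_{LN}$ forces $v_{0}=(\epsi-1)C_{LN}+\epsi\lambda C_{N}$. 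These are exactly (\ref{e5.5.1})--(\ref{e5.5.5}). (Equivalently one runs Garland's construction directly, applying $\theta_{\mathfrak{e}}$ to the representatives of Lemma~\ref{L5.7} and reducing the resulting $\vee$-products via Lemma~\ref{L5.6}; the two routes agree.) The converse is then the routine verification that the operator defined by (\ref{e5.5.1})--(\ref{e5.5.5}) preserves every bracket of Lemma~\ref{L5.7}.

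The main obstacle is the bookkeeping of the degree-$0$ central corrections $u_{0},v_{0}$. The $C_{N}$-contribution to $u_{0}$ is \emph{quadratic} in $\lambda$ and enters only through the single term $\lambda^{2}[N'_{\epsi},N'_{-\epsi}]$, so it is easy to overlook; and the $C_{LN}$-parts require combining the two mixed brackets $[L'_{\epsi},N'_{-\epsi}]$ and $[N'_{\epsi},L'_{-\epsi}]$ with their correct signs, repeatedly using $\epsi^{2}=1$. Getting these coefficients right, rather than any conceptual difficulty, is where the care lies.
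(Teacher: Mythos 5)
Your proposal is correct, and I verified the coefficient computations: applying $\tilde\theta$ to $[L'_{1},L'_{-1}]=-2L'_{0}$ does give $u_{0}=-\lambda C_{LN}+\frac{\epsi}{2}\lambda^{2}C_{N}$, the relations $[L'_{m},L'_{-m}]$, $[N'_{1},N'_{-1}]$ give $\tilde\theta(C_{L})=\epsi C_{L}$, $\tilde\theta(C_{N})=\epsi C_{N}$, and the $[L'_{m},N'_{-m}]$ relations give $\tilde\theta(C_{LN})=\epsi C_{LN}$ and $v_{0}=(\epsi-1)C_{LN}+\epsi\lambda C_{N}$. However, your route differs in mechanics from the one the paper sets up. The paper states the lemma without a written proof, but Lemmas \ref{L5.6} and \ref{L5.7} and the preceding discussion of $\theta_{V}(x\vee y)=\theta(x)\vee\theta(y)$ make clear the intended argument: push $\bar{\si}(\epsi,\lambda,a,d)$ through the explicit Garland model $V=\Lambda^{2}(\tsv)/J$, apply $\theta_{\mathfrak{e}}$ to the representatives $L'_{m}=L_{m}+\frac{1}{m}L_{0}\vee L_{m}$, $N'_{0}=N_{0}+L_{-1}\vee N_{1}$, $C_{LN}=\frac{1}{2}(L_{1}\vee N_{-1}+L_{-1}\vee N_{1})$, etc., and reduce the resulting $\vee$-products via Lemma \ref{L5.6}. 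You instead invoke only the uniqueness of the lift (Lemma \ref{L5.1}), write the lift as the obvious primed formula plus unknown central corrections, and solve for those corrections from the structure constants of Lemma \ref{L5.7}. Your route buys independence from the $\vee$-calculus (only the bracket table of $\hsv$ is needed) and makes the origin of the quadratic term $\frac{\epsi}{2}\lambda^{2}C_{N}$ transparent; the paper's route buys the formulas more directly once the reduction table of Lemma \ref{L5.6} is in hand. One presentational point: in determining $\tilde\theta(C_{LN})$ and $v_{0}$ you have two unknowns, and a single relation $[L'_{1},N'_{-1}]=-N'_{0}+2C_{LN}$ yields only one linear equation; you need $[L'_{m},N'_{-m}]$ for two distinct values of $m$ (e.g. $m=1,2$) to separate them. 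Your text does gesture at "the general relations," so this is a matter of making the bookkeeping explicit rather than a gap.
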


From the above lemmas  and Theorem \ref{T5.6}, we obtain the last
main theorem.

\begin{theorem}\label{T5.10}
$Aut(\tsv)\cong Aut(\hsv).$
\end{theorem}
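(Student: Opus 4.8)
The plan is to exhibit an explicit group isomorphism between $Aut(\tsv)$ and $Aut(\hsv)$ by combining the structural description of $Aut(\tsv)$ from Theorem \ref{T5.6} with the lifting machinery of Lemma \ref{L5.1}. By Lemma \ref{L5.1}, since $\tsv$ is perfect and $\hsv$ is its universal covering algebra (Corollary \ref{C3.2}), every $\si\in Aut(\tsv)$ extends uniquely to an automorphism $\widetilde{\si}$ of $\hsv$, and the map $\si\mapsto\widetilde{\si}$ is a group monomorphism. Thus $Aut(\tsv)$ embeds canonically as a subgroup of $Aut(\hsv)$. The remaining task is therefore to prove that this monomorphism is surjective, i.e. that every automorphism of $\hsv$ arises as the lift of some automorphism of $\tsv$.

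First I would reduce surjectivity to a statement about the centre. The universal covering map $\pi:\hsv\to\tsv$ has kernel $\mathfrak{c}=\C C_L\oplus\C C_{LN}\oplus\C C_N$, which is precisely the centre of $\hsv$ (this follows from Lemma \ref{L5.7}, since the three central elements are the only ones killed by all brackets). Any $\Theta\in Aut(\hsv)$ must preserve the centre, so $\Theta(\mathfrak{c})=\mathfrak{c}$, and hence $\Theta$ descends to an automorphism $\bar\Theta$ of the quotient $\hsv/\mathfrak{c}\cong\tsv$. The point is then that $\widetilde{\bar\Theta}$ and $\Theta$ are two automorphisms of $\hsv$ that induce the same automorphism on $\tsv$; by the uniqueness clause of Lemma \ref{L5.1} they must coincide, so $\Theta=\widetilde{\bar\Theta}$ lies in the image of the lifting map. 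This argument shows $\si\mapsto\widetilde\si$ is bijective, giving the desired isomorphism $Aut(\tsv)\cong Aut(\hsv)$.

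To make this airtight and to connect with the explicit computations already carried out, I would cross-check against Lemma \ref{L5.10}. That lemma describes, up to the inner automorphism group $\mathcal{I}$, every element of $Aut(\hsv)$ by the four scalar--integer parameters $(\epsi,\lambda,a,d)$ appearing in formulas (\ref{e5.5.1})--(\ref{e5.5.5}), which are exactly the parameters classifying elements of $Aut(\tsv)$ modulo $\mathcal{I}$ in Lemma \ref{L5.3}. Comparing formulas (\ref{E5.3.1})--(\ref{E5.3.4}) with (\ref{e5.5.1})--(\ref{e5.5.4}), one sees that the action of $\widetilde{\bar\si(\epsi,\lambda,a,d)}$ on the hatted generators $L'_n,N'_n,M'_n,Y'_{n+\frac12}$ reproduces precisely the action prescribed by Lemma \ref{L5.10}, with the central parameters forced by (\ref{e5.5.5}). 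This confirms that the same four-parameter family plus the inner part controls both groups, so the decompositions of $Aut(\tsv)$ and $Aut(\hsv)$ as $({\mathcal{I_{C}}}\rtimes{\mathcal{\overline{I}}})\rtimes((\mathfrak{a}\ltimes\mathfrak{t})\ltimes\mathfrak{b})$ match factor by factor.

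The main obstacle, and the step requiring genuine care, is verifying that the lift respects the inner automorphism group and the central parameters consistently — in particular checking that $\Theta$ necessarily acts on $\mathfrak{c}$ by the scalar $\epsi$ on each generator as recorded in (\ref{e5.5.5}), rather than by some more general invertible transformation of the three-dimensional centre. This amounts to confirming that $C_L$, $C_{LN}$, $C_N$ are mutually distinguished by the bracket structure (they pair with the $L$-, $LN$-, and $N$-blocks respectively), so that an automorphism cannot mix them nontrivially; once this rigidity is established the counting of parameters is forced and surjectivity follows. Everything else is the routine bookkeeping of transporting the generators through $\pi$ and invoking uniqueness of the lift.
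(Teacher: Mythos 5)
Your proof is correct, and its core is a genuinely different (and cleaner) route than the paper's. The paper obtains Theorem \ref{T5.10} by brute force: it computes $Aut(\hsv)$ explicitly in Lemma \ref{L5.10} via the $\vee$-calculus of Lemmas \ref{L5.6}--\ref{L5.7}, observes that the resulting parametrization by $(\epsi,\lambda,a,d)$ together with the inner part coincides with the parametrization of $Aut(\tsv)$ in Theorem \ref{T5.6}, and concludes. You instead argue abstractly: the lifting map $\si\mapsto\widetilde{\si}$ of Lemma \ref{L5.1} is already a monomorphism, and it is surjective because the kernel $\mathfrak{c}=\C C_L\oplus\C C_{LN}\oplus\C C_N$ of the covering map is exactly the centre of $\hsv$ (the centre of $\hsv$ maps into the centre of $\tsv$, which is zero by Lemma \ref{T1.2}, so it lies in $\mathfrak{c}$; conversely $\mathfrak{c}$ is central), hence every $\Theta\in Aut(\hsv)$ preserves $\mathfrak{c}$, descends to some $\bar\Theta\in Aut(\tsv)$, and then equals $\widetilde{\bar\Theta}$ by the uniqueness clause of Lemma \ref{L5.1} (two lifts of the same automorphism differ by a map that is the identity modulo the centre and fixes all brackets, hence coincide since $\hsv$ is perfect). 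This buys you the isomorphism without ever needing the explicit formulas (\ref{e5.5.1})--(\ref{e5.5.5}); your closing worry about whether $\Theta$ could mix $C_L$, $C_{LN}$, $C_N$ is therefore not actually an obstacle to the theorem itself, only to reproducing the explicit description in Lemma \ref{L5.10}, which the paper needs but you do not. The paper's heavier computation has the compensating benefit of exhibiting the lifted automorphisms concretely.
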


\bibliography{}

\begin{thebibliography}{9999}




\bibitem{BM}G. M. Benkart, R. V. Moody, {\em{Derivations, central extensions and affine Lie algebras}},
  Algebras Groups Geom.  3  (1986),  no. 4, 456-492.

\bibitem{B1}Y. Billig,{\em Representations of the twisted Heisenberg-Virasoro algebra at level zero.}, Canadian Mathematical Bulletin, {\bf 46} (2003), 529-537.

\bibitem{B2}Y. Billig,{\em Representations of toroidal extended affine Lie algebras}, J. Alg., {\bf 308} (2007), no. 1, 252-269.



\bibitem{DZ} D. \v{Z}. Dokovi\'{c}, K. Zhao,  {\em{Derivations, isomorphisms, and second cohomology
of generalized Witt algebras},} Tran. Ams. Math. Soc. {\bf 350(2)}
(1998), 643-664.


\bibitem{F} R. Farnsteiner, {\em{Derivations and extensions of finitely generated graded Lie
algebras,}} J. Algebra  {\bf 118 (1)} (1988), 34-45.


\bibitem{G} H. Garland, {\em{The arithmetic theory of loop groups,}} Publ.Math.IHES  {\bf 52} (1980), 5-136.

\bibitem{GJP} S. Gao, C. Jiang, Y. Pei
{\em{Representations of the the extended Schr\"{o}dinger-Virasoro
Lie algebra, }} in preparation.

\bibitem{H} M. Henkel, {\em{Schroedinger invariance and strongly anisotropic critical systems},}
J. Stat. Phys. 75, 1023 (1994), hep-th/9310081.

\bibitem{HPL} N. Hu, Y. Pei,  D. Liu, {\em{A cohomological characterization of Leibniz central
extensions of Lie algebras }},  To appear in Proc. Amer. Math.
Soc. Math.QA/0605399

\bibitem{J} N. Jacobson, {\em{ Lie Algebras,}}  Wiley-interscience, New York, 1962.

\bibitem{JJ} Q. Jiang,  C. Jiang, {\em  Representations of the twisted Heisenberg-Virasoro algebra and the full toroidal Lie algebras}, Algebra Colloq. {\bf 14}(1), (2007),117-134 .


\bibitem{JM} C. Jiang, D. Meng, {\em{The derivation algebra of the associative algebra
$C_{q}[X, Y, X^{-1}, Y^{-1}]$}}, Comm. Algebra  {\bf 6} (1998),
1723-1736.

\bibitem{JMZ} C.Jiang, D. Meng, S. Zhang, {\em{ Some complete Lie algebras,}}
 J. Algebra 186 (1996), 807-817.

\bibitem{K} I. Kaplansky ,
{\em  The Virasoro algebra}, Commun. Math. Phys.~{\bf86} (1982),
49-54.



\bibitem{LJ} D. Liu, C. Jiang, {\em{The generalized Heisenberg-Virasoro
algebra}}, arXiv:math/0510543v3 [math.RT]


\bibitem{LP}J. L. Loday,   T. Pirashvili, {\em{Universal enveloping algebras of Leibniz algebras and (co)-homology},}
 Math. Ann. {\bf 296} (1993), 138-158.

\bibitem{LW} W. Li,  R. Wilson,  {\em{Central extensions of some Lie algebras},}
Pro. Ams. Math. Soc. {\bf 126(9)} (1998), 2569-2577.


\bibitem{M} O. Mathieu ,
{\em  Classification of Harish-Chandra modules over the Virasoro
Lie algebra}, Invent. Math.~{\bf107} (1992), 225-234.


\bibitem{MZ} D. J. Meng, L. S. Zhu, {\em{Solvable complete Lie algebras I}},
Comm. Algebra 24 (1996), 4181-4197.



\bibitem{MZJ} D. Meng, L. Zhu, C. Jiang,  {\em{Complete Lie
algebras}}, Science Press,  2001, in Chinese.


\bibitem{P} A. Pianzola, {\em  Automorphisms of toroidal Lie algebras and their
central quotients},  J. Algebra Appl. 1 (2002), no. 1, 113-121.


\bibitem{RU} C. Roger, J. Unterberger,
{\em The Schr\"{o}dinger-Virasoro Lie group and algebra:
Representation theory and cohomological study}, Annales Henri
Poincar¨¦ {\bf(7-8)} (2006), 1477-1529.


\bibitem{S} Y. Su, {\em{2-cocycles on the Lie algebras of generalized differential operators},}
Comm. Alg. {\bf (30)} (2002), 763-782.

\bibitem{SJ} R. Shen, C. Jiang, {\em The derivation algebra and automorphism group of
the twisted Heisenberg-Virasoro algebra}, Comm. Alg. {\bf 34 (7)}
(2006), 2547-2558.

\bibitem{U} J. Unterberger, {\em{On vertex algebra representations of the Schr\"{o}dinger-Virasoro
Lie algebra,}} arXiv:cond-mat/0703214v2.

\bibitem{ZM} L. Zhu, D. Meng, {\em{Complete Lie algebras II}},  Algebra Colloq. 5 (1998), 289-296.





\end{thebibliography}

\end{document}